\newcommand{\vgeq}{\mathrel{\rotatebox{90}{$\geq$}}}
\numberwithin{table}{section}
\title{Complete $3$-term arithmetic progression free sets of small size in vector spaces and other abelian groups}
\author{Bence Csajb\'ok, \footnote{Dipartimento di Meccanica, Matematica e Management, Politecnico di Bari, Via Orabona 4, I-70125 Bari, Italy. Current address: Department of Computer Science, ELTE Eötvös Loránd University, Budapest, Hungary. E-mail: {\tt bence.csajbok@ttk.elte.hu}}\quad Zolt\'an L\'or\'ant Nagy \footnote{ELTE Linear Hypergraphs  Research Group, ELTE Eötvös Loránd University, Budapest, Hungary. The author is supported by the Hungarian Research Grant (NKFIH) No. PD  134953. and No. K. 124950 and the University Excellence Fund of Eötvös Loránd University E-mail: {\tt nagyzoli@cs.elte.hu}}}
\date{}
\newcommand{\Z}{{\mathbb Z}}
\newcommand{\cP}{{\mathcal P}}
\newcommand{\cL}{{\mathcal L}}
\newcommand{\F}{{\mathbb F}}
\newcommand{\E}{\mathbb{E}}
\newcommand{\pP}{\mathbb{P}}
\newcommand{\sat}{\mathrm{sat}}
\newtheorem{theorem}{Theorem}[section]
\newtheorem{problem}[theorem]{Problem}
\newtheorem{lemma}[theorem]{Lemma}
\newtheorem{corollary}[theorem]{Corollary}
\newtheorem{definition}[theorem]{Definition}
\newtheorem{proposition}[theorem]{Proposition}
\newtheorem{example}[theorem]{Example}
\newtheorem{observ}[theorem]{Observation}
\newtheorem{remark}[theorem]{Remark}
\newtheorem{constr}[theorem]{\bf Construction}
\newtheorem{cor}[theorem]{\bf Corollary}
\newtheorem{prop}[theorem]{\bf Proposition}
\theoremstyle{definition}
\DeclareMathOperator{\PG}{{PG}}
\DeclareMathOperator{\AG}{{AG}}
\begin{document}
	\maketitle
	
	\begin{abstract}
		
		
		A subset $S$ of an abelian group $G$ is called $3$-$\mathrm{AP}$ free if it does not contain a three term arithmetic progression. Moreover, $S$ is called complete $3$-$\mathrm{AP}$ free, if it is maximal w.r.t. set inclusion.
		One of the most central problems in additive combinatorics is to determine the maximal size of a $3$-$\mathrm{AP}$ free set, which is necessarily complete. 
		In this paper we are interested in the minimum size of complete $3$-$\mathrm{AP}$ free sets. We define and study saturation w.r.t. $3$-$\mathrm{AP}$s and present constructions of small complete $3$-$\mathrm{AP}$ free sets 
		and $3$-$\mathrm{AP}$ saturating sets 
		for several families of vector spaces and cyclic groups.
		
	\end{abstract}
	
	
	
	\section{Introduction}

	Studying the maximum possible size of a subset of a vector space  over a finite field which contain either no (non-trivial) solution to a given linear equation or not too many collinear points is a classical yet vibrant research area \cite{Alon, Bruyn,Eber, EG, KovacsNagy, FGR, Mimura, Lisa, Ilja}.
	The most notable examples are the Sidon sets and the so-called cap-set problem. The latter one is to determine the largest subset of $\F_3^n$ containing no complete line, or in other terms, no arithmetic progression of length 3 ($3$-$\mathrm{AP}$), or no collinear triplets. In general,  point sets of finite affine or projective spaces with no three in line are called caps. Recently Ellenberg and Gijswijt proved a breakthrough result regarding caps in $\F_3^n$ \cite{EG} building on the ideas of Croot, Lev and Pach \cite{CLP}, and they also proved that the size of a 
	$3$-$\mathrm{AP}$ free set of $\F_p^n$ is always bounded from above by $(p-\delta_p)^n$ for some small constant $\delta_p>0$ depending only on $p$, see also \cite{EPPP}.
	
	For the general case of abelian groups,  the maximum size of $3$-$\mathrm{AP}$ free sets  was discussed in the classical paper of Frankl, Graham and Rödl \cite{FGR}.   A nice and general overview on additive combinatorial and extremal results concerning arithmetic progressions is by Shkredov \cite{Ilja}.
	
	A finite point set (with respect to a property)  is called  \textit{complete} if it does not contained as a subset in a larger point set, satisfying the same property.
	In extremal problems described above, usually constructions of maximum size are in the center of attention. These are complete by definition.  However, in several cases, the whole spectra of sizes matters for complete structures, and the structure of smallest size in particular. For example, in the case of complete caps over $\PG(n,q)$, the point set is corresponding to the parity check matrix of a $q$-ary linear code with codimension $n+1$,
	Hamming distance $4$, and covering radius $2$, see \cite{Hirs}. 
	
	In this paper we investigate the less studied lower end of the spectrum of possible sizes of complete $3$-$\mathrm{AP}$ free sets, the minimum size. We discuss the minimum size in arbitrary abelian groups of odd order and highlight the case of finite vector spaces.


	

	A $3$-term arithmetic progression of the abelian group $G$, $3$-$\mathrm{AP}$ for short,  is a set of three distinct elements of $G$ of the form $g,g+d,g+2d$, where $g,d\in G$. We will call $d$ the difference. If $d$ is the difference of a $3$-$\mathrm{AP}$ in $G$, then the order of $d$ is at least $3$. Let $\F_q$ denote the Galois field of $q$ elements, and $o_q(a)$ denotes the multiplicative order of $a \in\F_q$. In order to have a $3$-$\mathrm{AP}$ in $\F_q^n$ we need $q$ to be odd, so we will only consider this case. $A+B$ denotes the sumset $\{a+b :  a\in A, b\in B\}$ of sets $A$ and $B$,  while $A\dot+B$ denotes the restricted sumset where the summands must be distinct.

	\begin{definition}
		$A\subseteq G$ is called \textbf{$3$-$\mathrm{AP}$-free} if it does not contain a $3$-$\mathrm{AP}$ of $G$.
		Moreover, $A$ is \textbf{complete $3$-$\mathrm{AP}$-free} if it is $3$-$\mathrm{AP}$ free and  not contained in a larger $3$-$\mathrm{AP}$ free set. 
	\end{definition}
	\vspace{-0.2cm}
	The completeness of a $3$-$\mathrm{AP}$ free set can be interpreted via saturation as well: $S$ is complete $3$-$\mathrm{AP}$ free if $S$ is $3$-$\mathrm{AP}$ free and a saturating set w.r.t. $3$-$\mathrm{AP}$s.

	\begin{definition}
		For a subset $S\subseteq G$ we say that $S$ is\textbf{ $3$-$\mathrm{AP}$ saturating } or in other words, $S$ \textbf{$3$-$\mathrm{AP}$ saturates } $G$  if for each $x \in G \setminus S$  there is a $3$-$\mathrm{AP}$ of $G$ consisting of $x$ and two  elements of $S$. 
		In a broader context, we say that $S$ \textbf{$3$-$\mathrm{AP}$ saturates } 
		a set $H\subset G$ if similar condition holds for the elements of $H\setminus S$.
	\end{definition}
	\vspace{-0.2cm}
	In this paper we will mostly consider the problem of $3$-$\mathrm{AP}$ saturation in groups of odd order. 
	\vspace{-0.2cm}
	\begin{definition}
		For $g\in G$ if $\ell\in \mathbb{Z}$ is positive, then $\ell g= \underbrace{g+g+\ldots+g}_{\ell} \in G$ and $(-\ell)g=~-~(\ell g)\in ~G$. 
		If the order of $G$ is odd then we define $\frac12 g$ as the unique element $x\in G$ such that $x+x=g$, that is,  $x=(k+1)g$, where the order of $g$ is $2k+1$.
	\end{definition}
	
	
	\begin{observ}\label{obs1}
		A set $A\subseteq G$ is $3$-$\mathrm{AP}$ saturating, iff  for every $x \in G \setminus A$  there is a $3$-$\mathrm{AP}$  consisting of $x$ and $\{a_1, a_2\}\subseteq A$ such that $(i)$ either  $x=2a_1-a_2$, or $(ii)$ $2x=a_1+a_2$.
		
		If $G$ has odd order then $(ii)$ is equivalent to   
		$x=\frac12 a_1 + \frac12 a_2$.
	\end{observ}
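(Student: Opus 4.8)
The plan is to unwind the definition of a $3$-$\mathrm{AP}$. By definition a $3$-$\mathrm{AP}$ is an (unordered) set $\{g,g+d,g+2d\}$ of three \emph{distinct} elements; equivalently, it is a $3$-element subset $\{u,v,w\}$ of $G$ in which one of the three elements is the ``midpoint'' of the other two, meaning that twice it equals their sum. So the first step is simply to observe that, given $x\in G\setminus A$ and a genuine $2$-subset $\{a_1,a_2\}\subseteq A$, the triple $x,a_1,a_2$ forms a $3$-$\mathrm{AP}$ of $G$ exactly when these three elements are pairwise distinct and one of them is the middle term.

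Next I would carry out the three-way case split according to which of $x,a_1,a_2$ is the middle term. If $a_1$ is the middle term then $a_1-x=a_2-a_1$, i.e.\ $x=2a_1-a_2$; if $a_2$ is the middle term then symmetrically $x=2a_2-a_1$; and if $x$ is the middle term then $x-a_1=a_2-x$, i.e.\ $2x=a_1+a_2$. Since the pair $\{a_1,a_2\}$ is unordered, the first two cases are exchanged by relabelling the pair, so together they are precisely condition $(i)$, while the third case is condition $(ii)$; this gives one direction. For the converse, assume $(i)$ or $(ii)$ holds with $a_1,a_2\in A$, $a_1\neq a_2$, and $x\notin A$: then $x\neq a_1$ and $x\neq a_2$ because $x\notin A$, so all three elements are distinct, and by construction one of them is the average of the other two, hence $\{x,a_1,a_2\}$ is a valid $3$-$\mathrm{AP}$; thus $A$ $3$-$\mathrm{AP}$-saturates $x$. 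Ranging over all $x\in G\setminus A$ yields the stated equivalence.

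For the odd-order addendum I would invoke the Definition: when $|G|$ is odd, $\frac12 g$ denotes the unique $y\in G$ with $y+y=g$. Applying this twice gives $(\frac12 a_1+\frac12 a_2)+(\frac12 a_1+\frac12 a_2)=a_1+a_2$, so by uniqueness $\frac12 a_1+\frac12 a_2=\frac12(a_1+a_2)$, and therefore $2x=a_1+a_2$ is equivalent to $x=\frac12 a_1+\frac12 a_2$. I do not expect any substantial obstacle: the proof is essentially a direct translation of ``collinear/in $3$-$\mathrm{AP}$'' into the three midpoint equations. The only point that deserves a line of care is the distinctness bookkeeping in the converse direction --- checking $a_1\neq a_2$ and that the resulting common difference has order at least $3$ --- which in the odd-order case that drives the rest of the paper is immediate, since $2(a_1-a_2)=0$ forces $a_1=a_2$ there.
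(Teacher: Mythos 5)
Your proof is correct. The paper states this as an Observation with no proof at all, and your argument — translating ``$3$-$\mathrm{AP}$ containing $x$ and two elements of $A$'' into the three midpoint equations, collapsing two of them by the symmetry of the unordered pair, and checking distinctness in the converse (where, as you note, the order of the common difference being at least $3$ is automatic from the three elements being distinct) — is exactly the routine verification the paper implicitly relies on.
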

	\vspace{-0.3cm}
	Saturation and completeness with respect to  $3$-$\mathrm{AP}$s  were considered in integer sequences as well, see \cite{SandorCs, Fang}.\\
	The postage stamp problem seeks the greatest integer  $r=r_k$ such that  there exists a set $A_k$ of $k$ positive integers together with $0$ such that \vspace{-0.3cm} $$i\in A_k+A_k \ \ \mbox{ for all \  } i=0, 1, \ldots, r.$$ 
	Mrose \cite{Mrose} and independently,  Fried \cite{Fried}  showed that the  $r_k$ is at least $\frac{2}{7}k^2+O(k)$. 
	A set of non-negative integers $A$ is called a basis of order two if $A+A=\mathbb{N}$ holds for the sumset. 
	Note that a variation of Mrose's construction can be extended to an additive $2$-basis of $\mathbb{N}$, see \cite{Habsi, Hof}. Such constructions lead to 
	small sets $S$ of $\F_p$ for which every element of $\F_p$ is an arithmetic mean of a pair of elements from $S$, hence $S$ saturates the $3$-$\mathrm{AP}$s. We will discuss this in Section 4.
	
	These constructions provide further motivations to introduce the saturation with respect to a set of (coefficient) vectors. 
	
	

	
	For a subset $S$ of elements of a group  $G$ (written additively) we will write $S^*$ to denote $S$ minus the neutral element. 
	
	\begin{definition}[$W$-avoiding and $W$-saturating sets]
		\label{defavoid}
		Let $W$ denote a set of vectors from $\F_q^*\times \F_q^*$. 
		We define $W$-avoiding and $W$-saturating sets in $\F_q^n$ as follows. 
		
		$A\subseteq \F_q^n$ is  \textbf{$W$-avoiding}, if there is no $w=(\lambda_1, \lambda_2)\in W$ such that $a=\lambda_1a'+\lambda_2a''$ has a non-trivial solution 
		with $a,a',a''$ pairwise distinct vectors of $A$. 

		$A\subseteq \F_q^n$ is  \textbf{$W$-saturating} in $\F_q^n$ if for each    $x\in \F_q^n\setminus A$
		there exists a $w=(\lambda_1, \lambda_2)\in W$ such that $x=\lambda_1a'+\lambda_2a''$ for a pair $(a', a'')\in A^2$, $a'\neq a''$. 
		
		$A \subseteq \F_q^n$ is \textbf{complete $W$-avoiding} if it is $W$-avoiding and $W$-saturating. 
		
	\end{definition}

	If $W$ consists of a single vector $W=\{w\}$, we omit the brackets for brevity.

	\begin{definition}[Avoiding and saturating sets in groups]
		
		Let $W$ denote a subset of $\mathbb{Z}\times \mathbb{Z}$.   
		We define $W$-avoiding, $W$-saturating and complete $W$-avoiding sets in  abelian groups $G$ similarly to Definition \ref{defavoid}. We will be mostly interested in the cases when $W$ is an element of $\{(2,-1), (1,1),(1,-1)\}$. 
		
		If $G$ has odd order then we also define $(\frac12,\frac12)$-saturating sets.
		
	\end{definition}
	
	\begin{remark}
		\label{r1}
		For a subset $A\subseteq \F_q^n$ and for $(\lambda_1,\lambda_2)\in \F_q^*\times \F_q^*$ the following properties are equivalent: $(i)$ $A$ is $(\lambda_1,\lambda_2)$-avoiding, $(ii)$ $A$ is $(1/\lambda_1,-\lambda_2/\lambda_1)$-avoiding, $(iii)$ $A$ is $(1/\lambda_2,-\lambda_1/\lambda_2)$-avoiding.
		
		In a group $G$ the following properties are equivalent: $(i)$ $A$ is $3$-$\mathrm{AP}$ free $(ii)$ $A$ is $(2,-1)$-avoiding, $(iii)$ there is no three pairwise distinct elements $x,y,z\in A$ such that $2x=y+z$. (If the order of $G$ is odd, then it is equivalent to say that $A$ is $(\frac12, \frac12)$-avoiding.)
		
		Similarly, for $A \subseteq G \setminus \{0\}$ the following properties are equivalent in any abelian group $G$: $(i)$ $A$ is restricted sum-free, i.e., $(A\dot+A)\cap A= \emptyset$, $(ii)$ $A$ is $(1,1)$-avoiding, $(iii)$ $A$ is $(1,-1)$-avoiding.
	\end{remark}
	
	
	\vspace{-0.3cm}
	Note that Definition \ref{defavoid} can be extended naturally to any set of vectors of $\bigcup_{t=2}^\infty (\F_q^*)^t$.
	
	Our main (but not only) focus will be the case $W=\{(2,-1)\}$
	for its correspondence to  $3$-$\mathrm{AP}$s, see Observation \ref{obs1}, and in general, the case when $W$ consists of a single vector.

	We will also apply the fact that the field $\F_{q^n}$ is itself a vector space over $\F_{q^h}$ for  $h \mid n$. This will enable us to alter the dimension of the underlying structure at times, which will provide improvements on the estimates. If $q=p^r$, $p$ prime, and $W \subseteq \F_p^* \times \F_p^*$, then studying $W$-saturating and $W$-avoiding sets in the vector space $\F_q^n$ is equivalent to study the same questions in the elementary abelian group $\F_p^{rn}$.
	
	Now we introduce the functions we wish to study.
	
	\begin{definition}
		For a group $G$ we define the following: \vspace{-0.2cm}
		\begin{enumerate}[\rm(1)]\vspace{-0.2cm}
			\item  Let $a(3-\mathrm{AP}, G)$ denote the minimum size of a complete $3$-$\mathrm{AP}$ free set of $G$.\vspace{-0.2cm}
			\item  Let $\sat(3-\mathrm{AP}, G)$ denote the minimum size of a $3$-$\mathrm{AP}$ saturating set of $G$.\vspace{-0.2cm}
			\item  Let $a(W, G)$ denote the minimum size of a complete $W$-avoiding set of $G$. If there is no $W$-avoiding set of $G$, then put $a(W,G)=\infty$.\vspace{-0.2cm}
			\item  Let $\sat(W, G)$ denote the minimum size of a $W$-saturating set of $G$.\vspace{-0.2cm}
		\end{enumerate}
	\end{definition}
	
	Observe that in $\mathbb{Z}_5$ there are no complete $(2,-1)$-avoiding sets.
	
	\begin{example} As an example, we show complete $3$-AP sets for $G=\F_3^2$ and $G=\F_5^2$ in Figure \ref{fig:kicsipelda}. These are of minimum size, c.f. Lemma \ref{satbound}. We note in advance that in $\F_q^2$ we can always find complete $3$-AP sets of size $q$ when $-2$ is not a square element in $\F_q$, c.f. Theorem \ref{main1}.
	\end{example}

	\begin{figure}[htbp] 
		\centering 
		
		\tikzstyle{small dot}=[circle, fill, inner sep=1.5pt]
		\tikzstyle{large dot}=[circle, fill, inner sep=3pt] 
		
		\begin{minipage}{0.45\textwidth} 
			\centering
			\begin{tikzpicture}
				\def\maxCoord{2}
				
				\draw[step=1, gray, very thin] (0,0) grid (\maxCoord,\maxCoord);
				
				\foreach \x in {0,...,\maxCoord} {
					\foreach \y in {0,...,\maxCoord} {
						\node[small dot] at (\x,\y) {};
					}
				}
				
				\node[large dot] at (0,0) {};
				\node[large dot] at (0,1) {};
				\node[large dot] at (1,1) {};
				\node[large dot] at (1,0) {};
				
			\end{tikzpicture}
		\end{minipage}
		\hfill 
		\begin{minipage}{0.45\textwidth} 
			\centering
			\begin{tikzpicture}
				\def\maxCoord{4}
				
				\draw[step=1, gray, very thin] (0,0) grid (\maxCoord,\maxCoord);
				
				\foreach \x in {0,...,\maxCoord} {
					\foreach \y in {0,...,\maxCoord} {
						\node[small dot] at (\x,\y) {};
					}
				}
				
				\node[large dot] at (0,0) {};
				\node[large dot] at (1,1) {};
				\node[large dot] at (2,4) {};
				\node[large dot] at (3,4) {};
				\node[large dot] at (4,1) {};
				
			\end{tikzpicture}
		\end{minipage}
		
		\caption{Complete $3$-AP free sets of minimum size in $\F_3^2$ and in $\F_5^2$.}
		\label{fig:kicsipelda}
	\end{figure}
	
	\begin{remark}\label{rem}
		Any  complete $3$-$\mathrm{AP}$ free set is $3$-$\mathrm{AP}$ saturating, any complete $W$-avoiding set is $W$-saturating by definition, thus 
		\[a(W, G)\geq \sat(W, G).\]  
		
		Since 
		$(2,-1)$-saturating sets are clearly  satisfying the $3$-$\mathrm{AP}$ saturating property in view of Observation \ref{obs1}, we have
		\begin{equation}\label{eq:triv}
			\begin{array}{ccc}
				\sat( (2,-1), G) & \geq & \sat(3-\mathrm{AP}, G)\\
				& & \\
				\vgeq &  & \vgeq\\
				& & \\
				a((2,-1),G) & \geq & a(3-\mathrm{AP}, G)\\
			\end{array}
		\end{equation}
	\end{remark}
	
	\vspace{-0.3cm}
	Our main results are as follows.

	\begin{theorem}\label{main3AP}
		Let $p$ be an odd prime and $k$ a positive integer. Then we have
		\begin{enumerate}[\rm(1)] \vspace{-0.2cm}
			\item 
			\[\sqrt{2/3}\cdot p^{2k-1}<a(3-\mathrm{AP}, \F_p^{4k-2})\leq p^{2k-1},\]  provided that  $-2$ is not a square element  in $\F_p$.  Also, \vspace{-0.2cm}
			\item 
			\[\sqrt{2/3}\cdot p^{n/2}<a(3-\mathrm{AP}, \F_p^{n})\leq a((2,-1),\F_p)^n.\]
		\end{enumerate}
	\end{theorem}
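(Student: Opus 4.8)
\emph{The plan splits into the two lower bounds and the two upper bounds; the lower bounds are common to (1) and (2), and each upper bound is a different construction.}

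\emph{Both lower bounds.} I would get these from a single counting argument valid in every abelian group $G$ of odd order. If $S\subseteq G$ is complete $3$-$\mathrm{AP}$ free then $S$ is $3$-$\mathrm{AP}$ saturating, so by Observation~\ref{obs1} every $x\in G\setminus S$ can be written as $x=2a_1-a_2$ or as $x=\frac12 a_1+\frac12 a_2$ with $a_1,a_2\in S$, and in both cases $a_1\neq a_2$ since $x\notin S$. There are at most $|S|(|S|-1)$ elements of the first shape and at most $\binom{|S|}{2}$ of the second, so
\[
|G|\;\le\;|S|+|S|(|S|-1)+\binom{|S|}{2}\;=\;\frac32|S|^2-\frac12|S|\;<\;\frac32|S|^2,
\]
hence $|S|>\sqrt{2/3}\,\sqrt{|G|}$. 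Substituting $|G|=p^{4k-2}$ and $|G|=p^{n}$ gives the lower bounds in (1) and (2).

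\emph{Upper bound in (2).} The plan here is a direct product construction. Let $A\subseteq\F_p$ be a complete $(2,-1)$-avoiding set of minimum size $a((2,-1),\F_p)$ (if there is none, the bound is vacuous), and form $A^n\subseteq\F_p^n$. First $A^n$ is $3$-$\mathrm{AP}$ free: a $3$-$\mathrm{AP}$ inside $A^n$ is nontrivial in some coordinate $i$, and since $p$ is odd its three $i$-th coordinates are three distinct elements of $A$ in arithmetic progression, a contradiction. Next $A^n$ is $3$-$\mathrm{AP}$ saturating: for $x\in\F_p^n\setminus A^n$ set $I=\{i:x_i\notin A\}\neq\emptyset$; for $i\in I$ the $(2,-1)$-saturation of $A$ gives $x_i=2a_{1,i}-a_{2,i}$ with $a_{1,i}\neq a_{2,i}$ in $A$, while for $i\notin I$ put $a_{1,i}=a_{2,i}=x_i\in A$. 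Then $\mathbf a_1,\mathbf a_2\in A^n$ differ in every coordinate of $I$ and so are distinct, $x=2\mathbf a_1-\mathbf a_2$, and $x\notin\{\mathbf a_1,\mathbf a_2\}$ (examine a coordinate of $I$), so $\{\mathbf a_2,\mathbf a_1,x\}$ is a $3$-$\mathrm{AP}$ through $x$ with its other two points in $A^n$. Hence $A^n$ is complete $3$-$\mathrm{AP}$ free and $a(3-\mathrm{AP},\F_p^n)\le|A|^n=a((2,-1),\F_p)^n$.

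\emph{Upper bound in (1).} Here I would pass to an extension field. Since $\F_{p^{2k-1}}$ is a $(2k-1)$-dimensional $\F_p$-vector space, there is an $\F_p$-linear (in particular additive) isomorphism $\F_p^{4k-2}\cong\F_{p^{2k-1}}^2$, and it sends $3$-$\mathrm{AP}$s to $3$-$\mathrm{AP}$s and complete $3$-$\mathrm{AP}$ free sets to complete $3$-$\mathrm{AP}$ free sets. By the construction behind Theorem~\ref{main1} (as in the Example), $\F_q^2$ contains a complete $3$-$\mathrm{AP}$ free set of size $q$ whenever $-2$ is a non-square in $\F_q$, so it remains only to see that $-2$ stays a non-square in $\F_{p^{2k-1}}$. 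This follows from $(-2)^{(p^{m}-1)/2}=\bigl((-2)^{(p-1)/2}\bigr)^{1+p+\cdots+p^{m-1}}$ together with $1+p+\cdots+p^{m-1}\equiv m\pmod 2$, which is odd for $m=2k-1$: since $(-2)^{(p-1)/2}=-1$ by hypothesis, $(-2)^{(p^{2k-1}-1)/2}=-1$. Applying Theorem~\ref{main1} over $\F_{p^{2k-1}}$ and transporting back through the isomorphism produces a complete $3$-$\mathrm{AP}$ free set in $\F_p^{4k-2}$ of size $p^{2k-1}$, i.e.\ $a(3-\mathrm{AP},\F_p^{4k-2})\le p^{2k-1}$.

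\emph{Main obstacle.} The counting is routine; the two points that need care are (i) that it is $(2,-1)$-saturation, not bare $3$-$\mathrm{AP}$ saturation, which survives taking direct products — the padding $a_{1,i}=a_{2,i}=x_i$ is consistent only for the single relation $x=2a_1-a_2$ — so the correct factor to raise to the $n$-th power is a minimum complete $(2,-1)$-avoiding set in $\F_p$, which is why $a((2,-1),\F_p)$ and not $a(3-\mathrm{AP},\F_p)$ appears in (2); and (ii) the quadratic-residue bookkeeping, which works precisely because the extension degree $2k-1$ is odd, and this is also what fixes the exponent $4k-2$ in (1).
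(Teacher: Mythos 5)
Your proposal is correct and follows essentially the same route as the paper: the lower bounds are the double-counting bound of Proposition~\ref{satbound}(1), the upper bound in (1) is the parabola of Theorem~\ref{main1} applied over $\F_{p^{2k-1}}\times\F_{p^{2k-1}}$ together with the observation that $-2$ remains a non-square in odd-degree extensions, and the upper bound in (2) is the $n$-fold direct product of a minimum complete $(2,-1)$-avoiding set of $\F_p$, exactly as in the corollary to Proposition~\ref{product}. Your explicit coordinatewise verification of the $(2,-1)$-saturation of $A^n$ is just an unrolled version of Proposition~\ref{product}(3)--(4), and your remark that it is $(2,-1)$-saturation rather than bare $3$-$\mathrm{AP}$ saturation that survives direct products matches the paper's own caveat.
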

	\vspace{-0.3cm}
	Observe that $(2)$ of Theorem \ref{main3AP} motivates the study of 
	$a((2,-1),\F_p)$, or in general $a((2,-1),\Z_m)$, where $\Z_m$ is the cyclic group of order $m$.


	\begin{theorem}\label{maingroups} Let $m$ denote a positive odd integer. Then
		\begin{enumerate}[\rm(1)] \vspace{-0.2cm}
			\item     $\sat((2,-1), \mathbb{Z}_m)<\sqrt{c_m\cdot m}$ where $c_m\in [1,3]$ is a constant depending only on $m$.
			\vspace{-0.2cm}
			\item 
			$a((2,-1), \mathbb{Z}_m)<\sqrt{c_m \cdot m}$ for $c_m\in [1, 1.5]$, a constant depending only on $m$, provided that $\frac23(4^n-1) < m<4^n$ holds for some positive integer $n$.\vspace{-0.2cm}
			\item $\sqrt{2m}-0.5<\sat((1/2, 1/2), \mathbb{Z}_m)\leq (\sqrt{3.5}+o(1))\sqrt{m}\approx 1.87\sqrt{m}.$
			\vspace{-0.2cm}
			\item  $a((2,-1), \mathbb{Z}_m)=\lceil\sqrt{m}\rceil$, provided that $m$ is if form $m=2^{2t}+2^t+1$ for some positive integer $t$.
		\end{enumerate}
	\end{theorem}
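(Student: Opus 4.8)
Write $q=2^t$, so $m=q^2+q+1$ is precisely the number of points of the projective plane $\PG(2,q)$; since $q^2<m<(q+1)^2$ we have $\lceil\sqrt m\rceil=q+1$, and the assertion is that $a((2,-1),\mathbb{Z}_m)=q+1$. The lower bound is immediate from the counting estimate of Lemma~\ref{satbound}: if $S\subseteq\mathbb{Z}_m$ is $(2,-1)$-saturating then every element of $\mathbb{Z}_m\setminus S$ has the form $2s'-s''$ with $s'\ne s''$ in $S$, so $m\le|S|+|S|(|S|-1)=|S|^2$, whence $|S|\ge q+1$; combined with $a((2,-1),\mathbb{Z}_m)\ge\sat((2,-1),\mathbb{Z}_m)$ from Remark~\ref{rem} this yields the bound. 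So the work is to exhibit a complete $(2,-1)$-avoiding set of size exactly $q+1$.

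For the construction I would use a Singer difference set. Fix a primitive element $\alpha$ of $\F_{q^3}$, so that $1,\alpha,\alpha^2$ is an $\F_q$-basis of $\F_{q^3}$; identify the points of $\PG(2,q)$ with the $1$-dimensional $\F_q$-subspaces of $\F_{q^3}$ and put $P_j:=\langle\alpha^j\rangle$. Multiplication by $\alpha$ induces a Singer cycle $\sigma$ of order $m$ acting regularly on points, so $j\mapsto P_j$ is a bijection $\mathbb{Z}_m\to\PG(2,q)$. Taking the line $\ell_0:=\langle 1,\alpha\rangle_{\F_q}$ and $D:=\{j\in\mathbb{Z}_m:P_j\subseteq\ell_0\}$, Singer's theorem gives that $D$ is a planar $(m,q+1,1)$-difference set; in particular no nonzero element of $\mathbb{Z}_m$ is a difference $d_i-d_j$ ($d_i,d_j\in D$) in two ways. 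Since a $3$-$\mathrm{AP}$ $d_i,d_j,d_k$ in $D$ forces $d_i-d_j=d_j-d_k$ with $(d_i,d_j)\ne(d_j,d_k)$, the set $D$ is automatically $3$-$\mathrm{AP}$-free, i.e.\ $(2,-1)$-avoiding by Remark~\ref{r1}. (Nothing so far uses that $q$ is even.)

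The crucial point, and the only place the hypothesis $q=2^t$ enters, is to show that $D$ is moreover $(2,-1)$-saturating; since any $x\notin D$ written as $2d'-d''$ automatically has $d'\ne d''$, this amounts to $2D-D=\mathbb{Z}_m$. As $m$ is odd, doubling is a bijection of $\mathbb{Z}_m$, so $|2D|=q+1$, and the plan is to show that $2D$ is again the difference set of a line. For $d\in D$ write $\alpha^d=a+b\alpha$ with $a,b\in\F_q$; in characteristic $2$, $\alpha^{2d}=(a+b\alpha)^2=a^2+b^2\alpha^2\in\langle 1,\alpha^2\rangle_{\F_q}$, and since $x\mapsto x^2$ is onto $\F_q$ the $q+1$ points $P_{2d}$ ($d\in D$) exhaust the line $\ell_0':=\langle 1,\alpha^2\rangle_{\F_q}$, so $2D=\{j:P_j\subseteq\ell_0'\}$. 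Now for any $g\in\mathbb{Z}_m$ the two lines $\ell_0'$ and $\sigma^g(\ell_0)$ of $\PG(2,q)$ meet in a point $P_{j_0}$; then $j_0\in 2D$, while $\sigma^{-g}(P_{j_0})=P_{j_0-g}\subseteq\ell_0$ gives $j_0-g\in D$, hence $g=j_0-(j_0-g)\in 2D-D$. Thus $2D-D=\mathbb{Z}_m$, so $D$ is complete $(2,-1)$-avoiding of size $q+1=\lceil\sqrt m\rceil$, which together with the lower bound proves the claim.

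The main obstacle, I expect, is precisely this last step: recognising the characteristic-$2$ phenomenon that in the Singer indexing the doubling map sends a line to a line, so that $2D-D=\mathbb{Z}_m$ collapses to the trivial fact that two lines of $\PG(2,q)$ always intersect. For odd $q$ the argument breaks: the Singer difference set remains $3$-$\mathrm{AP}$-free but is typically not $(2,-1)$-saturating (for instance $D=\{0,1,3,9\}\subseteq\mathbb{Z}_{13}$ with $q=3$ has $7,8,11\notin 2D-D$), which is exactly why the statement is restricted to $m=2^{2t}+2^t+1$.
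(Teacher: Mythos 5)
Your proposal addresses only part~(4) of the theorem. Parts (1)--(3) are not touched at all: in the paper, (1) and (2) come from the bijective base-$4$ construction of Theorem~\ref{gyok3} (the sets $H_l$ of integers with digits in $\{2,3\}$ saturating the blocks $K_l$ via the digitwise identity $k_i=2a_i-b_i$), and (3) comes from the Mrose additive $2$-basis (Construction~\ref{Mr}) for the upper bound together with the double-counting lower bound of Proposition~\ref{satbound}(3). As a proof of the full statement the proposal therefore has a substantial gap, even though what it does prove, it proves correctly.

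For part (4) itself your argument is correct and follows the same skeleton as the paper's Theorem~\ref{diffset}: lower bound from Proposition~\ref{satbound}(2) plus $a\geq\sat$, a Singer $(m,q+1,1)$-difference set for the construction, and $\lambda=1$ forcing $3$-$\mathrm{AP}$-freeness. The one genuinely different ingredient is how you establish $(2,-1)$-saturation. The paper invokes the First Multiplier Theorem: since $2\mid k-\lambda=q$, the prime $2$ is a numerical multiplier, so a suitable translate $D$ satisfies $2D=D$; then for $g\neq 0$ one writes $g=a-b$ with $a,b\in D$, notes $a=2c$ for some $c\in D$, and gets $g=2c-b$ (with $0$ handled separately via the progression $0,a,2a$). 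You instead prove the relevant fact by hand: in characteristic $2$ the Frobenius sends the line $\langle 1,\alpha\rangle_{\F_q}$ into the line $\langle 1,\alpha^2\rangle_{\F_q}$, so $2D$ is again the index set of a line, and $2D-D=\mathbb{Z}_m$ collapses to the statement that two lines of $\PG(2,q)$ meet. These are two faces of the same phenomenon (your computation is essentially the proof that $2$ is a multiplier in this instance), but your version is self-contained and explicit where the paper's is shorter at the cost of citing the multiplier theorem as a black box; your route also dispenses with the need to pass to a translate fixed by doubling, since intersecting the \emph{two} lines $\ell_0'$ and $\sigma^g(\ell_0)$ does the work that $2D=D$ does in the paper. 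Your closing remark, with the $\mathbb{Z}_{13}$ example showing the argument genuinely needs $q$ even, is a nice sanity check that is consistent with the paper's restriction to $m=2^{2t}+2^t+1$.
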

	\vspace{-0.3cm}
	The close connection between the $\sat$ function and the size of the complete $3$-AP-free sets, see Remark \ref{rem}, motivates the theorem below.
	
	\begin{theorem}\label{mainvectorspace}
		Let $3<p$ be a prime and $k$ a positive integer. Then we have
		\begin{enumerate}[\rm(1)] \vspace{-0.2cm}
			\item \[\sqrt{2/3}\cdot p^{k}<\sat(3-\mathrm{AP}, \F_p^{2k})\leq \left(\frac43+\frac{r}{3\cdot o_{p}(-2)}\right)(p^k-1),\]
			where $r$ is the residue modulo $3$ of the order $o_{p}(-2)$ of $-2$ in $\mathbb{F}_{p}^{\times}$.\vspace{-0.2cm}
			\item
			\[\sqrt{2/3}\cdot p^{k+\frac12}<\sat(3-\mathrm{AP}, \F_p^{2k+1})\leq \frac23(p^{k+1}+p^k-2)+\frac{r(p^{k+1}+p^k-2)}{3\cdot o_{p}(-2)},\]  
			where $r$ is the residue modulo $3$ of the order $o_{p}(-2)$ of $-2$ in $\mathbb{F}_{p}^{\times}$.
			\vspace{-0.2cm}
			\item 
			\[ p^{k}<\sat((2, -1), \F_p^{2k})\leq \left(\frac32+\frac{r}{2\cdot o_{p}(-2)}\right)(p^k-1),\]  
			where $r$ is the residue modulo $2$ of the order $o_{p}(-2)$ of $-2$ in $\mathbb{F}_{p}^{\times}$.\vspace{-0.2cm}
			
			\vspace{-0.2cm}
			\item \[ p^{k+\frac12}<\sat((2, -1), \F_p^{2k+1})\leq \sqrt{c_p}\left(\frac32+\frac{r}{2\cdot o_{p}(-2)}\right)(p^k-1)\sqrt{p},\]  
			where $r$ is the residue modulo $2$ of the order $o_{p}(-2)$ of $-2$ in $\mathbb{F}_{p}^{\times}$ and $c_p\leq 3$ is  the same constant depending on $p$ as in Theorem \ref{maingroups}.
			
		\end{enumerate}
	\end{theorem}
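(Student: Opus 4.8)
All four come from double counting the $3$-AP's a saturating set uses. If $S$ is $3$-$\mathrm{AP}$-saturating, then by Observation~\ref{obs1} every $x\in G\setminus S$ arises from an unordered pair $\{a_1,a_2\}\subseteq S$ either as one of $2a_1-a_2,\ 2a_2-a_1$ (so a pair accounts for at most two such $x$) or via $2x=a_1+a_2$ (at most one such $x$); hence $|G|-|S|\le 3\binom{|S|}{2}$, i.e. $2|G|\le 3|S|^2-|S|$, which already forces $|S|>\sqrt{2|G|/3}$. Taking $|G|=p^{2k}$ and $|G|=p^{2k+1}$ gives the bounds in (1) and (2). If $S$ is merely $(2,-1)$-saturating only the first possibility occurs and the relevant pairs are ordered, so $|G|-|S|\le|S|(|S|-1)$, i.e. $|S|\ge\sqrt{|G|}$; to upgrade this to the strict inequalities of (3) and (4) one analyses the equality case, where $(a',a'')\mapsto 2a'-a''$ would have to be a bijection $S\times S\to G$ and $S$ itself $(2,-1)$-avoiding, hence $3$-$\mathrm{AP}$-free, of size exactly $\sqrt{|G|}$ --- a configuration one rules out directly.

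\textbf{Upper bounds, even dimension (parts (1) and (3)).} Identify $\F_p^{2k}$ with $\F_{q^2}$, $q=p^k$, seen as a plane over $\F_q$. I would take $S=\ell\cup(v\cdot T)$, where $\ell$ is a $1$-dimensional $\F_q$-subspace, $v$ spans a second one, and $T\subseteq\F_q^\times$. Running through the positions of the two $S$-points of a $3$-$\mathrm{AP}$ (both on $\ell$, both on $v\cdot T$, or one of each) shows that $S$ is $3$-$\mathrm{AP}$-saturating precisely when $\{2\tau,-\tau,\tfrac12\tau:\tau\in T\}=\F_q^\times$, and $(2,-1)$-saturating precisely when $\{2\tau,-\tau:\tau\in T\}=\F_q^\times$. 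The key point is that $\{2,-1\}$ and $\{2,-1,\tfrac12\}$ are translates of geometric progressions of ratio $-2$, lying inside the cyclic subgroup $\langle-2\rangle\le\F_q^\times$ of order $o_p(-2)$ (the order of $-2$ is the same in $\F_q^\times$ as in $\F_p^\times$), so the covering conditions split over the cosets of $\langle-2\rangle$, within each of which they reduce to covering a cyclic group of order $o_p(-2)$ by intervals of length $3$, resp.\ $2$. That needs $\lceil o_p(-2)/3\rceil$, resp.\ $\lceil o_p(-2)/2\rceil$, intervals per coset, giving $|T|=\tfrac{q-1}{o_p(-2)}\lceil o_p(-2)/c\rceil$; a small optimisation (discarding redundant points of $\ell$) then produces the stated coefficients $\tfrac43+\tfrac{r}{3\,o_p(-2)}$ and $\tfrac32+\tfrac{r}{2\,o_p(-2)}$, with $r$ the residue of $o_p(-2)$ modulo $3$, resp.\ $2$.

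\textbf{Upper bounds, odd dimension (parts (2) and (4)).} For (2) I would carry the same idea into $\F_{p^{k+1}}\oplus\F_{p^k}$, an $\F_p$-space of dimension $(k+1)+k=2k+1$: place one full summand and, in the other, a union of $(-2)$-interval translates covering the multiplicative group of that summand; the contributions $p^{k+1}$ and $p^k$ (or $p^{k+1}-1$ and $p^k-1$) together with the divisibility correction $r/(3\,o_p(-2))$ combine into the claimed bound. For (4) I would lift the construction of (3): write $\F_p^{2k+1}$ as the union of the $p$ cosets of a hyperplane $\F_p^{2k}$, index them by $\Z_p$, and place a fixed copy of a small $(2,-1)$-saturating set of $\F_p^{2k}$ into each coset whose index lies in a prescribed set $D\subseteq\Z_p$; choosing $D$ to be a $(2,-1)$-saturating set of $\Z_p$ of size $<\sqrt{c_p\,p}$ from Theorem~\ref{maingroups}(1) makes the coset-indices of the $3$-$\mathrm{AP}$'s behave correctly and yields the factor $\sqrt{c_p}\,\sqrt p$ over the bound of (3).

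\textbf{Main difficulty.} The conceptual work is the case analysis, over the positions of the two $S$-points of a $3$-$\mathrm{AP}$, showing that ``full subspace plus structured partial subspace'' really saturates --- including that the partial part covers its own complement-points --- which is precisely what pins down the constants. In (4) the extra subtlety is synchronisation across the diagonal: the index set $D$ and the $\F_p^{2k}$-gadget placed in each occupied coset must be compatible so that points lying in an occupied coset but off the gadget remain covered; once the interval-covering picture of (1)--(3) is available, turning it into the fibered construction and tracking the $r/(c\cdot o_p(-2))$ corrections is routine bookkeeping.
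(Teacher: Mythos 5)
Your route coincides with the paper's for the lower bounds (Proposition~\ref{satbound}, i.e.\ the double counts $|G|-|H|\le 3\binom{|H|}{2}$ and $|G|-|H|\le 2\binom{|H|}{2}$) and for parts (1), (3) and (4): the paper's Constructions~\ref{lines} and~\ref{lines(2,-1)} are exactly your ``two transversal $\F_q$-lines in $\F_{q^2}$, trimmed according to the cosets of $\langle -2\rangle$'' (for (3) one axis is kept in full and from the other a maximum subset of each coset with no pair $g,-2g$ is deleted, which gives precisely the coefficient $\frac32+\frac{r}{2\,o_p(-2)}$), and part (4) is obtained, as you propose, by taking the direct product with a $(2,-1)$-saturating set of $\F_p$ of size $<\sqrt{c_p p}$ via Proposition~\ref{product}. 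Two points you leave implicit: the uncovered points lying \emph{on} the trimmed axes are saturated by a separate midpoint/pigeonhole argument (possible because more than half of each axis survives), and the strict inequality $p^k<\sat((2,-1),\F_p^{2k})$ in (3) does not follow from the counting bound alone, which only yields $\ge p^k$; you at least flag this, while the paper is silent on it.

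The genuine gap is in part (2). Your construction in $\F_{p^{k+1}}\oplus\F_{p^k}$ keeps one summand in full, so it has at least $p^{k+1}-1$ elements, whereas the claimed bound is $\left(\frac23+\frac{r}{3\,o_p(-2)}\right)(p^{k+1}+p^k-2)$ with coefficient at most $\frac45$ for $p>3$; since $p^{k+1}-1>\frac45(p^{k+1}+p^k-2)$ for all $p\ge 5$, no bookkeeping in the second summand can recover the stated constant. The paper instead trims \emph{both} axes (Theorem~\ref{dirprod}): in each summand, from each orbit $D_g=\{g,-2g,4g,\ldots\}$ it deletes a maximum subset $R_g$ in which neither $x=-2y$ nor $x=4y$ is solvable, so that of any geometric triple $g,-2g,4g$ at least two survive. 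A point $(a,b)$ with $a,b\neq 0$ then has three candidate saturating pairs, $\{(0,2b),(2a,0)\}$, $\{(-a,0),(0,b/2)\}$ and $\{(0,-b),(a/2,0)\}$, and since $\{a/2,-a,2a\}$ and $\{b/2,-b,2b\}$ are geometric progressions of ratio $-2$, at most one element of each is deleted, leaving at least one complete pair; the removed fraction of each axis is $\frac13-\frac{r}{3\,o_p(-2)}$, which yields the claimed coefficient on $p^{k+1}+p^k-2$. You should adopt this symmetric trimming for (2); the rest of your argument stands.
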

	
	\begin{remark}
		The same ideas as in Theorem \ref{mainvectorspace} work to prove analogous results in $\mathbb{Z}_m^k$, when $\gcd(m,6)=1$. Then  $o_p(-2)$ should be replaced by the multiplicative order of $-2$ in the ring $\mathbb{Z}_m$. In the proofs Theorems \ref{dirprod} and \ref{dirprod2} should be used intead of Propositions \ref{linesp} and \ref{linesp2}, respectively. 
	\end{remark}
	
	\begin{theorem}\label{proby}
		For abelian groups $G$ of order $n > 5$ odd, it holds that
		$$\sat(3-\mathrm{AP},G) \leq \sat((1/2,1/2),G) \leq \sqrt{(n-1)\ln{(n-1)}}+ \sqrt{(n-1)}+1.$$    
	\end{theorem}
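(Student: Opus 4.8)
The plan is to handle the two inequalities separately. The first, $\sat(3-\mathrm{AP},G)\le\sat((1/2,1/2),G)$, I would read off directly from Observation~\ref{obs1}: if $S$ is $(1/2,1/2)$-saturating and $x\in G\setminus S$, the witnessing pair $a_1\ne a_2\in S$ with $x=\tfrac12a_1+\tfrac12a_2$, i.e. $a_1+a_2=2x$, makes $a_1,x,a_2$ a $3$-$\mathrm{AP}$; hence every $(1/2,1/2)$-saturating set is $3$-$\mathrm{AP}$ saturating, giving the inequality between the minimum sizes. All the real work is then in the upper bound on $\sat((1/2,1/2),G)$, which I would obtain by the probabilistic method with alteration.

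First I would set up the combinatorial picture. Fix $x\in G$. Since $n=|G|$ is odd, the map $a\mapsto 2x-a$ is an involution of $G$ whose only fixed point is $x$, so $G$ is the disjoint union of the singleton $\{x\}$ and exactly $(n-1)/2$ two-element ``antipodal'' sets $\{a,2x-a\}$ with $a\ne x$; moreover these pairs are pairwise disjoint. By Observation~\ref{obs1}, a set $S\subseteq G$ $(1/2,1/2)$-saturates a point $x\notin S$ precisely when at least one antipodal pair around $x$ is contained in $S$ (its two members are automatically distinct and different from $x$). The pairwise disjointness is exactly what will make the corresponding inclusion events independent under random sampling.

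Then I would run the random construction: include each element of $G$ in $S_0$ independently with probability $p\in(0,1)$; set $B:=\{x\in G:\ \text{no antipodal pair around }x\text{ lies in }S_0\}$ and $S:=S_0\cup B$. For every outcome, $S$ is $(1/2,1/2)$-saturating, since $x\notin S$ forces $x\notin B$, so some antipodal pair around $x$ lies in $S_0\subseteq S$ and witnesses saturation. By disjointness the $(n-1)/2$ inclusion events are independent, so $\Pr[x\in B]=(1-p^2)^{(n-1)/2}\le e^{-p^2(n-1)/2}$, whence
\[
\E\,|S|\ \le\ \E\,|S_0|+\E\,|B|\ \le\ pn+n\,e^{-p^2(n-1)/2}.
\]
Choosing $p=\sqrt{\ln(n-1)/(n-1)}$ makes the right-hand side equal to $\sqrt{(n-1)\ln(n-1)}+\sqrt{n-1}+\sqrt{\ln(n-1)/(n-1)}+1/\sqrt{n-1}$, and picking an outcome with $|S|\le\E\,|S|$ produces a $(1/2,1/2)$-saturating set of at most this size.

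The remaining, only mildly delicate, step is to collapse the two lower-order terms into the single summand $+1$ of the statement, i.e. to verify $1/\sqrt{n-1}+\sqrt{\ln(n-1)/(n-1)}\le 1$, equivalently $\sqrt{\ln(n-1)}+1\le\sqrt{n-1}$. This is exactly where $n>5$ (so $n-1\ge 6$) is used: squaring, it amounts to $\phi(m):=m-2\sqrt m+1-\ln m\ge 0$ for $m\ge 6$, which follows from $\phi(6)>0$ together with $\phi'(m)=1-m^{-1/2}-m^{-1}>0$ on $[6,\infty)$; note the bound genuinely fails at $m=5$, matching the hypothesis. I expect this elementary estimate — rather than any conceptual difficulty — to be the main point to get right: a cruder choice of $p$ would give the same theorem with a slightly worse constant, but landing on exactly $\sqrt{(n-1)\ln(n-1)}+\sqrt{n-1}+1$ requires this little check.
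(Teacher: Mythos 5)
Your proof is correct and follows essentially the same alteration argument as the paper's Theorem \ref{random_satu}: sample $S_0$ with $p=\sqrt{\ln(n-1)/(n-1)}$, add back the unsaturated points, use the disjointness of the $(n-1)/2$ pairs $\{a,2x-a\}$ to get $\Pr[x\ \text{unsaturated}]=(1-p^2)^{(n-1)/2}$, and close with the same elementary check $\sqrt{\ln(n-1)}+1\le\sqrt{n-1}$ for $n-1\ge 6$. The only (cosmetic) difference is that you work with the $(1/2,1/2)$-pairs throughout, which directly yields the stated bound on $\sat((1/2,1/2),G)$, whereas the paper phrases the construction in terms of $3$-AP saturation and then remarks that the identical argument gives the $(1/2,1/2)$ bound.
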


	
	The paper is organized as follows. 
	In Section 2 we present some preliminary results and useful tools. First we prove lower bounds on the size of  saturating and complete $W$-avoiding sets. Then we show the strength and limitation of direct product constructions, which will enable us to prove the upper bound results for vector spaces (Theorems \ref{main3AP} and \ref{mainvectorspace}). To have upper bounds close to our lower bounds, we will rely on  further avoiding and saturating set constructions in finite fields which are small enough.  Finally, we point out the relation of these results to results concerning caps and $3$-AP covering sequences.\\
	In Section 3 we prove the upper bounds of Theorem \ref{main3AP} by analysing point sets of conics in the respected vector spaces.
	We also prove some general constructions of saturating sets in direct product of groups. Then we deduce the upper bounds of Theorem \ref{mainvectorspace} and Theorem \ref{main3AP} (1).
	Section 4 is devoted to the proof of Theorem \ref{maingroups} and \ref{proby}, where we provide constructions based on numeral systems, additive basis, Sidon sets and random constructions, using tools from additive number theory to design theory. 
	
	\section{Preliminary results}
	
	\subsection{Double counting and direct sum constructions}

	We begin this section by demonstrating some trivial lower bounds on the size of $3$-$\mathrm{AP}$ saturating and $W$-saturating sets.
	
	\begin{prop}
		\label{satbound}
		\begin{enumerate}[\rm(1)]
			
			\item Suppose that $H$ is a $3$-$\mathrm{AP}$ saturating set in the abelian group $G$ of odd order. Then
			\[|H|\geq \sqrt{\frac{2}{3}|G|+\frac{1}{36}}+\frac{1}{6}.\]
			Hence, $\sat(3-\mathrm{AP}, \F_q^n)>0.8164\cdot q^{n/2}$.  \vspace{-0.2cm}
			\item Suppose that $H$ is a $w$-saturating set in the group $G$, where 
			$w\in \mathbb{Z}\times \mathbb{Z}$ 
			or $w=(\lambda_1,\lambda_2)\in \F_q^*\times \F_q^*$  if $G=\F_q^n$. Then 
			\[|H|\geq \lceil \sqrt{|G|} \rceil.\]
			Hence, $\sat(w, \F_q^n)\geq \lceil q^{n/2} \rceil$.  \vspace{-0.2cm}
			\item Suppose that $H$ is a $w$-saturating set in the group $G$, where 
			$w=(1,1)$, or $w=(\frac12, \frac12)$ if $G$ has odd order, or $w=(\lambda,\lambda)$ for some $\lambda \in \F_q^*$ if $G=\F_q^n$. Then
			\[|H|\geq \sqrt{2|G|+\frac14}  -\frac12.\]
			So in this case $\sat(w, \F_q^n)>\sqrt2\cdot q^{n/2}-0.5$
		\end{enumerate}
	\end{prop}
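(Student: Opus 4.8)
The plan is to prove all three parts by the same double counting principle: if $H$ is saturating (in whichever sense), then every element of $G\setminus H$ must be produced from a suitable pair of elements of $H$ via the prescribed linear combination(s), so $|G\setminus H|$ is at most the number of available pairs. The only thing that varies between the three parts is whether the relevant pairs are ordered or unordered, and — in part (1) — that there are two different ways a $3$-$\mathrm{AP}$ can pass through $x$ and two points of $H$.

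For part (2), if $H$ is $w$-saturating with $w=(\lambda_1,\lambda_2)$, then the map $(a',a'')\mapsto \lambda_1 a'+\lambda_2 a''$ defined on ordered pairs of distinct elements of $H$ must have image containing $G\setminus H$. There are exactly $|H|(|H|-1)$ such ordered pairs, so $|G|-|H|\le |H|(|H|-1)$, which simplifies to $|G|\le |H|^2$; since $|H|$ is an integer we get $|H|\ge\lceil\sqrt{|G|}\,\rceil$, and the statement for $\F_q^n$ is the case $|G|=q^n$. For part (3) the coefficient vector has the form $(\lambda,\lambda)$, so $\lambda a'+\lambda a''=\lambda(a'+a'')$ is symmetric in $a',a''$; hence the relevant pairs are unordered and there are only $\binom{|H|}{2}$ of them. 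The same counting gives $|G|-|H|\le\binom{|H|}{2}$, i.e.\ $2|G|\le |H|^2+|H|$, and solving this quadratic inequality for $|H|$ yields $|H|\ge\sqrt{2|G|+1/4}-1/2$, which for $G=\F_q^n$ is $>\sqrt2\,q^{n/2}-0.5$. (That the $(1,1)$ and $(\tfrac12,\tfrac12)$ conditions are of this $(\lambda,\lambda)$ type, and hence symmetric, is read off from Remark \ref{r1} and Observation \ref{obs1}.)

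For part (1) the new ingredient is Observation \ref{obs1}: in a group of odd order, a $3$-$\mathrm{AP}$ saturating set $H$ must satisfy, for every $x\in G\setminus H$, either $x=2a_1-a_2$ for an ordered pair $(a_1,a_2)$ of distinct elements of $H$, or $x=\tfrac12 a_1+\tfrac12 a_2$ for an unordered pair $\{a_1,a_2\}$. Therefore $G\setminus H$ is covered by at most $|H|(|H|-1)$ elements of the first kind together with at most $\binom{|H|}{2}$ of the second kind, so $|G|-|H|\le \tfrac32|H|(|H|-1)$, i.e.\ $|G|\le \tfrac32|H|^2-\tfrac12|H|$. Solving this quadratic inequality for $|H|$ gives $|H|\ge\sqrt{\tfrac23|G|+\tfrac1{36}}+\tfrac16$, and substituting $|G|=q^n$ together with $\sqrt{2/3}>0.8164$ yields $\sat(3-\mathrm{AP},\F_q^n)>0.8164\cdot q^{n/2}$.

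The whole argument is essentially bookkeeping; the points needing care are (a) keeping ordered versus unordered pairs straight in each case, since this is exactly what produces the different leading constants $\tfrac32$, $1$, $\tfrac12$ in the three inequalities, and (b) in part (1), not forgetting that a $3$-$\mathrm{AP}$ through $x$ and two points of $H$ arises in two genuinely distinct ways, which is why part (1) is weaker than the $\lceil\sqrt{|G|}\,\rceil$ bound of part (2). I do not expect any substantive obstacle beyond this.
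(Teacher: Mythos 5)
Your proof is correct and follows essentially the same double-counting argument as the paper: in each part you bound $|G\setminus H|$ by the number of values a saturating pair can produce ($3\binom{|H|}{2}$, $2\binom{|H|}{2}$, and $\binom{|H|}{2}$ respectively) and solve the resulting quadratic inequality. The ordered-versus-unordered bookkeeping in part (1) is just a repackaging of the paper's count of three values per unordered pair, so there is no substantive difference.
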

	\begin{proof}
		Part $(1)$. 
		By definition, $\forall x\in G\setminus H$, we have $a\neq b\in H$ s.t.  $x=\frac12 a + \frac12 b$, or $x=2a-b$, or $x=-a+2b$. Thus by double counting, \begin{equation*}\label{eq:1}
			|G\setminus H|\leq 3\binom{|H|}{2},
		\end{equation*}
		from which the lower bound follows.
		
		Part $(2)$.
		Let $w=(\lambda_1, \lambda_2)$. 
		By definition, $\forall x\in G\setminus H$, we have $h\neq h'\in H$ s.t.  $x= \lambda_1h+\lambda_2h'$. 
		Then by double counting, \begin{equation*}\label{eq:11}
			|G|- |H|\leq 2\binom{|H|}{2}.
		\end{equation*}
		After rearranging, we get the desired bound.
		
		Part $(3)$. By double counting,
		\[|G|-|H|\leq \binom{|H|}{2},\]
		and the bound follows after rearranging.
	\end{proof}
	
	
	
	

	
	
	\begin{remark}
		We will see later that the lower bound above for $\sat((2,-1),G)$ is sharp in some cyclic groups, cf. Theorems \ref{diffset} and \ref{gyok3}. Subsection \ref{last} provides further instances when Proposition \ref{satbound}  (2) is sharp. 
	\end{remark}
	
	

	\noindent Next we show that the direct sum construction preserves certain properties concerning saturation and $W$-avoidance. Note however that saturation with respect to $3$-$\mathrm{AP}$s is not preserved.

	\begin{prop}[Avoiding and saturation property in direct products]\label{product}
		\ \\ \vspace{-0.5cm}
		\begin{enumerate}[\rm(1)]
			\item Suppose that $H$ and $H'$ are subsets of the abelian groups $G$ and $G'$, respectively.
			\begin{enumerate}
				\item 
				Assume that $H$ and $H'$ are $3$-$\mathrm{AP}$ free in the corresponding groups. Also, if $G$ (or $G'$) has even order, then assume  that the order of $x-y$ is larger than $2$ for any two distinct $x,y\in H$ ($\in H'$). 
				Then $H\times H'$ is $3$-$\mathrm{AP}$ free in $G \times G'$.
				\item If $H$ and $H'$ are $(1,1)$-avoiding in the corresponding groups and $0_G \notin H$, $0_{G'}\notin H'$ then $H\times H'$ is $(1,1)$-avoiding in $H \times H'$.
			\end{enumerate}
			
			
			\item  Suppose that $H$ and $H'$ are $W$-avoiding subsets of the vector spaces $\F_q^m$ and $\F_q^n$, respectively for some $W \subseteq \F_q^*\times \F_q^*$.  Then $H\times H'$ is $W$-avoiding in $\F_q^m \times \F_q^n$, provided that $\lambda_1+\lambda_2=1$ for all $w=(\lambda_1, \lambda_2)\in W$.
			
			\item Suppose that the set $W$  consists of a single vector $w=(\lambda_1, \lambda_2) \in \F_q^*\times \F_q^*$, provided that $\lambda_1+\lambda_2=1$. If $H\subseteq \F_q^m$ and $H'\subseteq \F_q^n$ are $W$-saturating sets of the corresponding vector space, then $H\times H'$ is also a $W$-saturating set in $ \F_q^{m}\times \F_q^n$.
			\item Suppose that $w=(2,-1)$, or $G$ and $G'$ are of odd order and $w=(1/2,1/2)$. If $H\subseteq G$ and $H'\subseteq G'$ are $w$-saturating sets of the corresponding groups, then $H\times H'$ is also a $w$-saturating set in $G \times G'$.
		\end{enumerate}
	\end{prop}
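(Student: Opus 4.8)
The plan is to handle all four parts through a single device: a linear identity such as $\mathbf{z}=\lambda_1\mathbf{z}'+\lambda_2\mathbf{z}''$, or the $3$-$\mathrm{AP}$ relation $2\mathbf{z}''=\mathbf{z}'+\mathbf{z}'''$, holds in $G\times G'$ exactly when it holds in each of the two coordinates, and in every situation we need the coefficients sum to $1$. The role of this ``sums to $1$'' feature is that a coordinate in which all of the relevant vectors take the \emph{same} value $g$ automatically satisfies the identity there, since $\lambda_1 g+\lambda_2 g=g$ and likewise $2g-g=g$, $\tfrac12 g+\tfrac12 g=g$. For the $(1,1)$ case the sum is $2$ and this breaks, but then the hypothesis $0_G\notin H$, $0_{G'}\notin H'$ plays the analogous role.

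\textbf{Avoidance, parts (1)(a), (1)(b), (2).} I would argue by contradiction: suppose $H\times H'$ contains a forbidden triple of pairwise distinct points $P_1,P_2,P_3$, and project it to the $G$- and $G'$-coordinates, obtaining triples $(a_1,a_2,a_3)$ in $H$ and $(b_1,b_2,b_3)$ in $H'$ satisfying the same defining relation. The crux is the dichotomy that each coordinate triple is \emph{either} a genuine forbidden configuration in the corresponding set \emph{or} degenerate. Here ``degenerate'' is analysed by elementary case checks: for (1)(a) a coincidence among $a_1,a_2,a_3$ yields $2(a_1-a_2)=0$, which collapses the triple to a single value in odd order and, in even order, by the hypothesis that nonzero differences within $H$ (and $H'$) have order greater than $2$; for (2) any coincidence collapses the triple, using $\lambda_1+\lambda_2=1$ and $\lambda_i\neq0$; for (1)(b) the coincidences $a_1=a_2$ or $a_1=a_3$ are impossible because they force $a_3=0$ or $a_2=0$, so the only degeneracy is equality of the two ``summand'' entries. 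In each case, degeneracy of \emph{both} coordinate triples would force two of $P_1,P_2,P_3$ to coincide, which is excluded; hence at least one coordinate triple is a genuine forbidden configuration inside $H$ or $H'$, contradicting the hypothesis. This proves (1)(a), (1)(b), (2).

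\textbf{Saturation, parts (3), (4).} Now $W=\{w\}$ is a single vector with $\lambda_1+\lambda_2=1$, or $w\in\{(2,-1),(\tfrac12,\tfrac12)\}$. Given $(x,y)\notin H\times H'$, at least one of $x\notin H$, $y\notin H'$ holds, and I would split into cases accordingly. If both fail, pick witnesses $x=\lambda_1 a'+\lambda_2 a''$ with $a'\neq a''$ in $H$ and $y=\lambda_1 b'+\lambda_2 b''$ with $b'\neq b''$ in $H'$; then $(a',b')$ and $(a'',b'')$ are distinct elements of $H\times H'$ witnessing $(x,y)$. If instead, say, $x\in H$ and $y\notin H'$, take $b'\neq b''$ in $H'$ witnessing $y$ and use $(x,b'),(x,b'')\in H\times H'$: these are distinct, and the first coordinate is correct precisely because the coefficients sum to $1$, i.e.\ $\lambda_1 x+\lambda_2 x=x$ (respectively $2x-x=x$ or $\tfrac12 x+\tfrac12 x=x$). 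The remaining case is symmetric. Thus every point outside $H\times H'$ is saturated.

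I expect the argument to be largely routine; the points that require a little attention are the even-order bookkeeping in (1)(a), and the fact that in the saturation argument one never needs to take $a'=a''$ --- exactly because the ``constant coordinate'' trick is available whenever $\lambda_1+\lambda_2=1$. It is worth recording why, by contrast, saturation with respect to $3$-$\mathrm{AP}$s is \emph{not} preserved: saturating a point by a $3$-$\mathrm{AP}$ amounts to using the union of the two vectors $(2,-1)$ and $(\tfrac12,\tfrac12)$, and in the ``both coordinates outside'' case the witness for the $G$-coordinate and the one for the $G'$-coordinate may come from different members of this pair, and cannot be amalgamated into a single relation in $G\times G'$. This is precisely why parts (3) and (4) are stated for one vector at a time.
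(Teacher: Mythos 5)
Your proof is correct and follows essentially the same route as the paper's: argue avoidance by projecting a hypothetical forbidden triple to the two coordinates and showing that degeneracy in both coordinates would collapse the triple (using $\lambda_1+\lambda_2=1$, respectively the order-of-difference hypothesis in the even case and $0\notin H$ in the $(1,1)$ case), and argue saturation by combining coordinate witnesses, with the constant-coordinate trick $\lambda_1 x+\lambda_2 x=x$ handling the case where one coordinate already lies in the factor set. No gaps.
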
 
	
	Note that the condition $\lambda_1+\lambda_2=1$ holds if and only if  $w_1$, $w_2 $ and   $\lambda_1w_1 + \lambda_2w_2 \in \F_q^n$ are collinear in the affine space $\F_q^n$.
	Observe also that by choosing $w=(2, -1)$ in part (3), 
	the direct product will be $3$-$\mathrm{AP}$ saturating as well.
	
	\begin{proof}
		$(1a)$ Assume to the contrary that there is a $3$-$\mathrm{AP}$: 
		\[(h_1,h_1'),(h_2,h_2'),(h_3,h_3') \in  H\times H'\] 
		with difference $(d,d')\in G\times G'$. Since $(d,d')$ is not the neutral element of the group $G\times G'$, w.l.o.g. we may assume that $d$ is not the neutral element of $G$.
		Then $h_1,h_2,h_3$ is a $3$-$\mathrm{AP}$ of $G$, contradicting the assumption on $H$, or the order of $G$ is even and $h_1+h_3=2h_2$ holds because the size of $\{h_1,h_2,h_3\}$ is $2$, i.e. $h_1=h_3$ and hence the order of $h_1-h_2$ is $2$, a contradiction.
		
		$(1b)$ Assume to the contrary 
		$(h_1,h_1')=(h_2,h_2')+(h_3,h_3')$ for some $h_1,h_2,h_3 \in H$ and $h_1',h_2',h_3' \in H'$. W.l.o.g. we may assume $h_2 \neq h_3$. Then $h_1=h_2+h_3$ are $3$ distinct elements of $H$ contradicting the fact that $H$ is $(1,1)$-avoiding (recall $0_G\notin H$). 
		
		Proof of $(2)$. Assume to the contrary the existence of $w=(\lambda_1, \lambda_2 )\in W$ such that  $(h_1,h_1')= \lambda_1(h_2,h_2')+ \lambda_2(h_3,h_3')$ for some elements of $H\times H'$.  Hence
		\begin{equation*}   
			\begin{cases}
				h_1=\lambda_1h_2+\lambda_2h_3,\\
				h_1'=\lambda_1h_2'+\lambda_2h_3'.\\
			\end{cases}
		\end{equation*}
		Since $(h_2,h_2')\neq (h_3,h_3')$, we may assume w.l.o.g. that $h_2 \neq h_3$.
		We have $h_1=\lambda_1 h_2 + \lambda_2 h_3$ and this is a contradiction if $h_1, h_2, h_3$ are three pairwise distinct elements since $H$ is $W$-avoiding. If we had $h_1=h_2$, then $h_1(1-\lambda_1)=h_3(1-\lambda_1)$ and hence also $h_1=h_3$, a contradiction since $h_2 \neq h_3$ (and the same argument shows $h_1\neq h_3$ as well). 

		Proof of $(3)$. Take any $(g,g') \in (\F_q^{m}\times \F_q^{n}) \setminus (H \times H')$. If $g\notin H$ and $g'\notin H'$, then by the assumption, there exist $h_1,h_2 \in H$ and $h_1',h_2' \in H'$ such that
		\begin{equation*}   
			\begin{cases}
				g=\lambda_1h_1+\lambda_2h_2,\\
				g'=\lambda_1h_1'+\lambda_2h_2',\\
			\end{cases}
		\end{equation*}
		$g,h_1,h_2$ and $g',h_1',h_2'$ are sets of pairwise distinct elements.
		This in turn shows that $$(g,g')=\lambda_1(h_1, h_1')+\lambda_1(h_2, h_2'),$$
		where $(g,g'),(h_1,h_1'),(h_2,h_2')$ are pairwise distinct elements. 
		
		We cannot have $g\in H$ and $g'\in H'$ at the same time, hence w.l.o.g. we may assume $g\in H$ and $g'\notin H'$. 
		Then by the assumption, there exist $h_1',h_2' \in H'$ such that
		\[g'=\lambda_1h_1'+\lambda_2h_2',\]
		$g', h_1', h_2'$ are pairwise distinct elements.
		This in turn shows that $$(g,g')=\lambda_1(g, h_1')+\lambda_2(g, h_2'),$$
		where $(g,g'),(g,h_1'),(g,h_2')$ are pairwise distinct elements. 
		
		The proof of $(4)$ is the same as the proof of $(3)$.\end{proof}
	
	A generalisation of some of the results above will be discussed in Subsection \ref{last}.


	
	
	
	\begin{corollary}
		Direct product of complete $(2,-1)$-avoiding sets is a complete $3$-$\mathrm{AP}$ free set. Moreover,
		$  a((2,-1),G)\cdot a((2,-1),H) \ge a(3-AP,G\times H)$. This highlights the importance of finding complete $(2,-1)$-avoiding sets $A$ in $G$ such that  $|A|\leq \sqrt{|G|} $. 
	\end{corollary}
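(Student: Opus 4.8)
The plan is to verify separately the two conditions defining a complete $3$-$\mathrm{AP}$ free set for the product $A\times B$, where $A\subseteq G$ and $B\subseteq H$ are complete $(2,-1)$-avoiding sets; both conditions will follow from the relevant parts of Proposition~\ref{product} combined with Remarks~\ref{r1}, \ref{rem} and Observation~\ref{obs1}.

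For the $3$-$\mathrm{AP}$ free part: by Remark~\ref{r1} a set is $(2,-1)$-avoiding if and only if it is $3$-$\mathrm{AP}$ free, so $A$ and $B$ are $3$-$\mathrm{AP}$ free in $G$ and $H$, and Proposition~\ref{product}(1a) then gives that $A\times B$ is $3$-$\mathrm{AP}$ free in $G\times H$ (in the odd-order setting that is our main concern the side hypothesis in (1a) about differences of order $2$ is vacuous). For the saturation part: being complete $(2,-1)$-avoiding, $A$ and $B$ are $(2,-1)$-saturating, so Proposition~\ref{product}(4) applied with $w=(2,-1)$ --- which is stated for arbitrary abelian groups --- shows that $A\times B$ is $(2,-1)$-saturating in $G\times H$, and by Observation~\ref{obs1} (cf.\ Remark~\ref{rem}) a $(2,-1)$-saturating set is automatically $3$-$\mathrm{AP}$ saturating. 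Hence $A\times B$ is complete $3$-$\mathrm{AP}$ free.

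To get the numerical inequality, I would take $A$ and $B$ of minimum possible size, so $|A|=a((2,-1),G)$ and $|B|=a((2,-1),H)$ (if either of these is $\infty$ the inequality holds trivially); the first part then exhibits a complete $3$-$\mathrm{AP}$ free set $A\times B\subseteq G\times H$ of size $|A|\cdot|B|$, which yields $a(3-\mathrm{AP},G\times H)\le a((2,-1),G)\cdot a((2,-1),H)$. Iterating this over $n$ copies of one group $G$, any complete $(2,-1)$-avoiding set $A\subseteq G$ with $|A|\le\sqrt{|G|}$ produces the bound $a(3-\mathrm{AP},G^n)\le|G|^{n/2}$, which by Proposition~\ref{satbound}(1) together with $a(3-\mathrm{AP},\cdot)\ge\sat(3-\mathrm{AP},\cdot)$ (Remark~\ref{rem}) is best possible up to the constant factor $\sqrt{3/2}\approx 1.22$ --- this is the significance being pointed out. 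I do not expect a genuine obstacle here: the only points requiring attention are that the avoidance step must be run through Proposition~\ref{product}(1a) (plus Remark~\ref{r1}) and the saturation step through part (4), since parts (2)--(3) of that proposition are stated only for vector spaces, and that the even-order bookkeeping in (1a) is the single slightly fiddly issue, which disappears for groups of odd order.
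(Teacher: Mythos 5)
Your argument is correct and is exactly how the paper intends this corollary to be derived: the paper offers no separate proof, and your assembly of Remark~\ref{r1} (equivalence of $(2,-1)$-avoiding and $3$-$\mathrm{AP}$ free), Proposition~\ref{product} parts (1a) and (4), and Observation~\ref{obs1}/Remark~\ref{rem}, followed by taking factors of minimum size, is the intended route. The only loose end is the one you yourself flag: for even-order factors the side hypothesis of (1a) need not be satisfied by a complete $(2,-1)$-avoiding set (e.g.\ $\{0,1\}$ in $\mathbb{Z}_2$, where the difference has order $2$), but the conclusion survives anyway, since a genuine $3$-$\mathrm{AP}$ in $G\times H$ has difference $(d,d')$ with $(2d,2d')\neq 0$ and hence projects to a genuine $3$-$\mathrm{AP}$ in at least one coordinate, so the $3$-$\mathrm{AP}$ freeness of the product needs no extra hypothesis.
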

	

	The previous propositions 
	motivate the distinguishment below.
	
	\begin{definition}
		A complete $3$-$\mathrm{AP}$-avoiding or a complete $(\lambda, 1-\lambda)$-avoiding set $H\subseteq G$ is called \textbf{small} if $|H|\leq  \sqrt{|G|} $. 
	\end{definition}

	\begin{proposition} 
		\label{mirejo}
		Let $H$ denote a $W$-avoiding, $W'$-saturating set in $\F_q^m$.
		\begin{enumerate}[\rm(1)]
			\item Then for each $\lambda \in \F_q^*$ it holds that $\lambda H$ is $W$-avoiding and $W'$-saturating.
			\item If for each $(\lambda_1,\lambda_2) \in W$ it holds that $\lambda_1+\lambda_2=1$, then for each $d\in \F_q^m$, $H + d$ is $W$-avoiding. If for each $(\lambda_1,\lambda_2) \in W'$ it holds that $\lambda_1+\lambda_2=1$, then for each $d\in \F_q^m$, $H + d$ is $W'$-saturating.
			\item Assume $W'=\{(1,1)\}$, $0 \in H$ and $\lambda \in \F_{q}^m \setminus \{0,1\}$. Then for each $x\in \F_q^m \setminus \{0\}$ there exist $a,b \in \lambda H$ such that $x=(1/\lambda) a + (1/\lambda) b$. In particular, $\lambda H$ is $(1/\lambda,1/\lambda)$-saturating.
		\end{enumerate}
	\end{proposition}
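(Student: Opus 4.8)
The plan is to prove the three parts by the same elementary mechanism: for the avoiding statements, pull a hypothetical $W$-relation in the transformed set back to a $W$-relation in $H$; for the saturating statements, push a saturation witness for the pre-image forward to the transformed set. In each case the transformation ($v\mapsto\lambda v$ or $v\mapsto v+d$) is a bijection of $\F_q^m$, which is what makes distinctness of the triples involved survive the translation, and for $v\mapsto v+d$ one uses in addition the hypothesis $\lambda_1+\lambda_2=1$.

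For part (1), suppose $\lambda H$ contained pairwise distinct $a,a',a''$ with $a=\lambda_1a'+\lambda_2a''$ for some $(\lambda_1,\lambda_2)\in W$. Writing $a=\lambda h$, $a'=\lambda h'$, $a''=\lambda h''$ with $h,h',h''\in H$ and dividing by the nonzero scalar $\lambda$ gives $h=\lambda_1h'+\lambda_2h''$; since $v\mapsto\lambda v$ is a bijection of $\F_q^m$, the elements $h,h',h''$ are still pairwise distinct, contradicting that $H$ is $W$-avoiding. For the $W'$-saturation, take $x\in\F_q^m\setminus\lambda H$; then $(1/\lambda)x\notin H$, so there are $(\lambda_1,\lambda_2)\in W'$ and distinct $h',h''\in H$ with $(1/\lambda)x=\lambda_1h'+\lambda_2h''$, and multiplying by $\lambda$ exhibits $x=\lambda_1(\lambda h')+\lambda_2(\lambda h'')$ with $\lambda h',\lambda h''\in\lambda H$ distinct.

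For part (2), the key observation is that when $\lambda_1+\lambda_2=1$ the translation $\tau_d\colon v\mapsto v+d$ commutes with the operation $(v',v'')\mapsto\lambda_1v'+\lambda_2v''$, since $\lambda_1(v'+d)+\lambda_2(v''+d)=\lambda_1v'+\lambda_2v''+(\lambda_1+\lambda_2)d=\lambda_1v'+\lambda_2v''+d$. Hence a $W$-relation among pairwise distinct elements of $H+d$ subtracts off to a $W$-relation among pairwise distinct elements of $H$ (a contradiction), which proves the avoiding part; and in the saturating part, given $x\in\F_q^m\setminus(H+d)$ one has $x-d\notin H$, so a $W'$-witness $x-d=\lambda_1h'+\lambda_2h''$ translates to $x=\lambda_1(h'+d)+\lambda_2(h''+d)$, with $h'+d\neq h''+d$ because $\tau_d$ is injective.

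For part (3), with $W'=\{(1,1)\}$ and $0\in H$, I would split on whether $x\in H$. Given $x\in\F_q^m\setminus\{0\}$: if $x\notin H$, apply $(1,1)$-saturation of $H$ to get distinct $h',h''\in H$ with $x=h'+h''$ and set $a=\lambda h'$, $b=\lambda h''$; if $x\in H$, use $0\in H$ to write $x=x+0$ and set $a=\lambda x$, $b=\lambda\cdot 0=0$. In both cases $a,b\in\lambda H$ and $x=(1/\lambda)a+(1/\lambda)b$, and moreover $a\neq b$ — in the first case because $h'\neq h''$, in the second because $\lambda\neq 0$ and $x\neq 0$ force $\lambda x\neq 0$. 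Since $0=\lambda\cdot 0\in\lambda H$, every element of $\F_q^m\setminus\lambda H$ is nonzero, so the displayed identity together with $a\neq b$ gives that $\lambda H$ is $(1/\lambda,1/\lambda)$-saturating. There is no real obstacle in the argument; the only care needed is in tracking the distinctness conditions, and I would also note that the hypothesis should read $\lambda\in\F_q\setminus\{0,1\}$ (so that $\lambda$ scales vectors of $H$), where $\lambda\neq 0$ is what is genuinely used.
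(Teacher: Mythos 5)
Your proof is correct and follows essentially the same route as the paper's: pulling back hypothetical relations and pushing forward saturation witnesses through the bijections $v\mapsto\lambda v$ and $v\mapsto v+d$, using $\lambda_1+\lambda_2=1$ for translations, and the same case split on $x\in H$ in part (3). You are slightly more explicit about the distinctness bookkeeping, and your observation that the hypothesis should read $\lambda\in\F_q\setminus\{0,1\}$ rather than $\lambda\in\F_q^m\setminus\{0,1\}$ correctly identifies a typo in the statement.
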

	\begin{proof}
		Proof of (1). For some $(\lambda_1,\lambda_2)\in W$ and $a,b,c \in H$, $\lambda_1 \lambda a + \lambda_2 \lambda b = \lambda c $ would imply $\lambda_1 a + \lambda_2 b = c$, a contradiction, which proves that $\lambda H$ is $W$-avoiding. Also, if $x\in \F_q^m \setminus \lambda H$, then $x=\lambda c$ for some $c \notin H$ and hence $c=\lambda_1 a + \lambda_2 b$ for some $(\lambda_1,\lambda_2) \in W'$. It follows that $\lambda H$ is $W'$-saturating.
		
		Proof of (2). If we had $\lambda_1(a+d)+\lambda_2(b+d)=c+d$ for some $a,b,c \in H$ and $(\lambda_1,\lambda_2)\in W$, then also $\lambda_! a+\lambda_2 b=c$, a contradiction. If $x \notin H+d$ then $x=c+d$ for some $c\notin H$ and hence there exists $(\lambda_1,\lambda_2)\in W'$ such that $\lambda_1 a + \lambda_2 b = c$ proving that $H+d$ is $W'$-saturating.
		
		Proof of (3). Take some $x\neq 0$.
		If $x \notin H$, then $x=a+b$ for some $a,b \in H$ and hence $x=(1/\lambda) (\lambda a)+(1/\lambda) (\lambda b)$. If $x \in H$, then $x=(1/\lambda) (\lambda 0)+(1/\lambda) (\lambda x)$. 
	\end{proof}

	
	\begin{proposition}
		Let $q$ be odd. If $H \subseteq \F_q^m$ and $H' \subseteq \F_q^n$ are $(1,1)$-saturating such that the corresponding zero vectors are contained 
		in $H$ and in $H'$, resp.,  then $\frac12(2H \times 2H')$ is $(1,1)$-saturating in $\F_q^m \times \F_q^n$. 
	\end{proposition}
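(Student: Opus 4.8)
The plan is to reduce the claim to a short combinatorial check. First I would observe that, since $q$ is odd, $2$ is a unit of $\F_q$ with inverse $\frac12$, and scalar multiplication on $\F_q^m\times\F_q^n$ acts coordinatewise; therefore $\frac12(2H\times 2H')=\{(\frac12(2h),\frac12(2h')):h\in H,\ h'\in H'\}=H\times H'$. Hence the proposition is equivalent to the statement that $H\times H'$ itself is $(1,1)$-saturating in $\F_q^m\times\F_q^n$. This is not covered by Proposition~\ref{product}: parts (2)--(3) there require $\lambda_1+\lambda_2=1$, which fails for $(1,1)$ in odd characteristic, so the hypotheses $0\in H$ and $0\in H'$ must be used in an essential way.

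Next I would take an arbitrary $(x,x')\in(\F_q^m\times\F_q^n)\setminus(H\times H')$, so that $x\notin H$ or $x'\notin H'$, and split into cases. If $x\notin H$, then $(1,1)$-saturation of $H$ gives $a,b\in H$ with $a\neq b$ and $x=a+b$; if moreover $x'\notin H'$, I pick $a',b'\in H'$ with $a'\neq b'$ and $x'=a'+b'$ and write $(x,x')=(a,a')+(b,b')$, while if $x'\in H'$ I use $0\in H'$ to write $(x,x')=(a,0)+(b,x')$. Symmetrically, if $x\in H$ (so necessarily $x'\notin H'$), I pick $a',b'\in H'$ with $a'\neq b'$ and $x'=a'+b'$ and use $0\in H$ to write $(x,x')=(0,a')+(x,b')$. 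In each case both summands lie in $H\times H'$, and they are distinct because in the first two sub-cases the first coordinates differ ($a\neq b$) and in the last sub-case the second coordinates differ ($a'\neq b'$); this is exactly the definition of $(1,1)$-saturation for $H\times H'$.

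I do not expect a genuine obstacle here. The only things that require (routine) care are the identity $\frac12(2H\times 2H')=H\times H'$, which is where the oddness of $q$ is used, and keeping track of distinctness of the two representatives — and the role of the hypotheses $0\in H$, $0\in H'$ is precisely to supply a ``neutral'' coordinate that lets one lift a representation on a single factor to a representation of the correct sum on the product while preserving distinctness. I would also note in passing that the seemingly more structural route — use Proposition~\ref{mirejo}(3) to see that $2H$ and $2H'$ are $(\frac12,\frac12)$-saturating, then Proposition~\ref{product}(4), then rescale by $\frac12$ via Proposition~\ref{mirejo}(1) — produces a $(\frac12,\frac12)$-saturating set rather than a $(1,1)$-saturating one, so it does not actually shorten the argument.
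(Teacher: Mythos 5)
Your proof is correct, but it takes a genuinely different route from the paper. You collapse $\frac12(2H\times 2H')$ to $H\times H'$ at the outset and then verify $(1,1)$-saturation directly by a three-case analysis, using $0\in H$ and $0\in H'$ to pad a single-factor representation into a representation on the product while keeping the two summands distinct. The paper instead keeps the set in the form $\frac12(2H\times 2H')$ and chains its earlier lemmas: by Proposition \ref{mirejo}(3) the sets $2H$ and $2H'$ are $(\frac12,\frac12)$-saturating (indeed every nonzero vector is so represented), by Proposition \ref{product}(3) the product $(2H)\times(2H')$ is $(\frac12,\frac12)$-saturating, and a final application of Proposition \ref{mirejo}(3) (now with $\lambda=\frac12$, using that the product contains the zero vector) converts this back into $(1,1)$-saturation of the rescaled set. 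So the ``structural route'' you dismiss in your last paragraph is essentially the paper's proof; your objection is accurate only if the final rescaling is done via Proposition \ref{mirejo}(1), which merely preserves the saturation type, whereas the paper's use of part (3) is exactly what restores the coefficients $(1,1)$ --- though, as you implicitly sense, part (3) is stated for $(1,1)$-saturating input and the paper applies it to a $(\frac12,\frac12)$-saturating set, so a small extension is being invoked tacitly. What each approach buys: the paper's version reuses its machinery and displays the scaling trick that makes the coefficient-sum-one hypothesis of the product lemma available; your version is self-contained, makes the role of the zero vectors completely transparent, and exposes that the set in the statement is literally $H\times H'$, with oddness of $q$ needed only for that identification.
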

	\begin{proof}
		By (3) of Proposition \ref{mirejo} it follows that $2H$ is $(1/2,1/2)$-saturating in $\F_q^m$ and the same holds in $\F_q^n$ for $2H'$. Then by (3) of Proposition \ref{product} it follows that $(2H)\times (2H')$ is $(1/2,1/2)$-saturating in $\F_q^m \times \F_q^n$. Since this subset contains the zero vector of $\F_q^m \times \F_q^n$, the statement follows again by (3) of Proposition \ref{mirejo}. 
	\end{proof}
	
	\begin{proposition}
		If $H \subseteq \F_q^m$ and $H' \subseteq \F_q^n$ are $(1,-1)$-saturating such that the corresponding zero vectors are contained in $H$ and in $H'$, then $H \times H'$ is $(1,-1)$-saturating in $\F_q^m \times \F_q^n$.  
	\end{proposition}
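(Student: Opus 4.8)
The plan is to argue directly by a short case analysis; the only points of care are the distinctness requirement in the definition of a $(1,-1)$-saturating set and the fact that, unlike in part (3) of Proposition \ref{product}, the coefficient vector $(1,-1)$ does \emph{not} satisfy $\lambda_1+\lambda_2=1$. This is exactly why the hypothesis that the zero vectors lie in $H$ and in $H'$ is needed: the "diagonal" trick used in Proposition \ref{product}(3), writing a fixed coordinate $g$ as $\lambda_1 g+\lambda_2 g$, fails here since $1\cdot g+(-1)\cdot g=0\neq g$, and must be replaced by $g=g-0$.

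Concretely, I would fix an arbitrary $(g,g')\in(\F_q^m\times\F_q^n)\setminus(H\times H')$, so that at least one of $g\notin H$, $g'\notin H'$ holds, and split into three cases. If $g\notin H$ and $g'\notin H'$, then by hypothesis there are $h_1\neq h_2$ in $H$ with $g=h_1-h_2$ and $h_1'\neq h_2'$ in $H'$ with $g'=h_1'-h_2'$, whence $(g,g')=(h_1,h_1')-(h_2,h_2')$ with $(h_1,h_1')\neq(h_2,h_2')$, since already $h_1\neq h_2$. If $g\in H$ but $g'\notin H'$ (the case $g\notin H$, $g'\in H'$ being symmetric), I would use the zero vector: since $0\in H$ and $g\in H$ we have $g=g-0$ in the first coordinate, while $g'=h_1'-h_2'$ for some $h_1'\neq h_2'$ in $H'$; then $(g,g')=(g,h_1')-(0,h_2')$, where both terms lie in $H\times H'$ and they are distinct because either $g\neq 0$ or, if $g=0$, then $h_1'\neq h_2'$. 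This exhausts all cases and shows that $H\times H'$ is $(1,-1)$-saturating in $\F_q^m\times\F_q^n$.

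I do not expect any genuine obstacle; the only thing to watch is to verify the distinctness of the chosen pair in each case, which is precisely where the hypothesis $0\in H$, $0\in H'$ does its work. It is also worth recording that, in contrast with the $W$-avoiding direct-product statements, there is no collinearity reformulation here and no analogue of Proposition \ref{mirejo}(2) allowing a translation, so the argument is carried out on $H\times H'$ directly rather than reduced to a normalized coefficient vector.
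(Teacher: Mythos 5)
Your proposal is correct and follows essentially the same route as the paper's own proof: the identical three-way case split, with $(g,g')=(h_1,h_1')-(h_2,h_2')$ in the generic case and $(g,g')=(g,h_1')-(0,h_2')$ in the mixed case, using $0\in H$ exactly as the paper does. Your remarks on why the hypothesis $0\in H$, $0\in H'$ is needed and on the distinctness checks are accurate and match the paper's reasoning.
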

	\begin{proof}
		Take any $(g,g') \in (\F_q^{m}\times \F_q^{n}) \setminus (H \times H')$. If $g\notin H$ and $g'\notin H'$, then by the assumption, there exist $h_1,h_2 \in H$ and $h_1',h_2' \in H'$ such that
		\begin{equation*}   
			\begin{cases}
				g=h_1-h_2,\\
				g'=h_1'-h_2',\\
			\end{cases}
		\end{equation*}
		$g,h_1,h_2$ and $g',h_1',h_2'$ are sets of pairwise distinct elements.
		This in turn shows that $$(g,g')=(h_1, h_1')-(h_2, h_2'),$$
		where $(g,g'),(h_1,h_1'),(h_2,h_2')$ are pairwise distinct elements. 
		
		We cannot have $g\in H$ and $g'\in H'$ at the same time, hence w.l.o.g. we may assume $g\in H$ and $g'\notin H'$. 
		Then by the assumption, there exist $h_1',h_2' \in H'$ such that
		\[g'=h_1'-h_2',\]
		$g', h_1', h_2'$ are pairwise distinct elements.
		This in turn shows that $$(g,g')=(g, h_1')-(0, h_2'),$$
		where $(g,g'),(g,h_1'),(0,h_2')$ are pairwise distinct elements. 
	\end{proof}

	
	\subsection{Relation to caps}
	
	Let $q$ denote  {\em any} (even or odd) prime power. We describe the relation of the results above  to results concerning caps.
	
	\begin{definition}
		A cap of $\AG(n,q)$ is a point set meeting each line of $\AG(n,q)$ in at most two points. 
		
		\smallskip
		
		A cap is called complete if it cannot be extended to a larger cap.
		
		\smallskip
		
		A saturating set $S$ of $\AG(n,q)$ is a point set with the property that for each $P \in \AG(n,q) ~\setminus~ S$ there exist two distinct points $Q,R \in S$, such that $P$ is incident with the line joining $Q$ and $R$.
	\end{definition}
	
	It is clear from the definitions above that a cap is complete if and only if it is also a saturating set.\\
	The lattice of affine subspaces of $\F_q^n$ is isomorphic to the subspace lattice of $\AG(n,q)$. 
	For results on the (maximum) size of complete caps in $\AG(n,q)$, we refer to \cite{EB, Edel, EPPP, Ty} and the references therein. 
	A related problem is the smallest size of complete caps in finite affine and projective spaces. For the size of small complete caps, the theoretical lower bound is essentially sharp for $q$ even \cite{evencap4, evencap3, evencap2, evencap1} and in some cases also for $q$ odd \cite{completecap}. See also \cite{Anbar, Bartoli, DO, Giuli} and the references therein for small complete caps for $q$ odd.

	\begin{prop}
		Put $W=\{(\lambda_1, \lambda_2) \in \F_q^*\times \F_q^* : \lambda_1+\lambda_2=1\}$. Then we obtain the following.
		\begin{enumerate}[\rm(1)]
			\item Caps of $\AG(n,q)$ and $W$-avoiding sets of $\F_q^n$ are equivalent objects. In particular, by Theorem \ref{product} Part $(2)$ the direct sum of caps is a cap.  
			\item Saturating sets of $\AG(n,q)$ and $W$-saturating sets of $\F_q^n$ are equivalent objects.
			\item Complete caps of $\AG(n,q)$ and complete $W$-avoiding sets of $\F_q^n$ are equivalent objects.
		\end{enumerate}
		If $q=3$, then $W=\{(2,2)\}$, if $q=4$, then 
		$W=\{(i,1+i)\}$. Hence, by Part $(3)$ of Theorem \ref{product}, direct sum of saturating sets is a saturating set and direct sum of complete caps is a complete cap for $q\in \{3,4\}$.  \qed
	\end{prop}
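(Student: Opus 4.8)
The plan is to prove the Proposition as a translation between the incidence-geometric language of caps and saturating sets in $\AG(n,q)$ and the linear-algebraic language of $W$-avoiding and $W$-saturating sets in $\F_q^n$ from Definition~\ref{defavoid}. The bridge is the elementary fact, already recorded in the remark following Proposition~\ref{product}, that three pairwise distinct points $a,a',a''\in\F_q^n$ are collinear in $\AG(n,q)$ if and only if $a=\lambda_1 a'+\lambda_2 a''$ for some $(\lambda_1,\lambda_2)\in W$. To see this: if the three points are collinear, then $a=a''+t(a'-a'')$ for a unique $t\in\F_q$, and distinctness of $a$ from $a''$ and from $a'$ forces $t\neq 0$ and $t\neq 1$, so $a=ta'+(1-t)a''$ with both coefficients nonzero and summing to $1$; conversely, any such identity with $(\lambda_1,\lambda_2)\in W$ puts $a$ on the line through the distinct points $a',a''$, and $\lambda_1,\lambda_2\neq 0$ force $a\neq a'$ and $a\neq a''$.

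Granting this, Parts (1) and (2) are straightforward unwindings of the definitions. For (1): a set $A\subseteq\F_q^n$ fails to be a cap precisely when it contains three pairwise distinct collinear points, which by the bridge is precisely the failure of $A$ to be $W$-avoiding. For (2): a point $x\notin A$ lies on a line joining two points $a',a''\in A$ precisely when $x=\lambda_1 a'+\lambda_2 a''$ for some $(\lambda_1,\lambda_2)\in W$ with $a'\neq a''$; here $x\notin A$ automatically gives $x\neq a',a''$, which is exactly the information encoded by the requirement that $\lambda_1,\lambda_2$ be nonzero. Part (3) is the conjunction of (1) and (2), using that a cap is complete iff it is a saturating set (noted in the text just above the Proposition) and that a set is complete $W$-avoiding iff it is both $W$-avoiding and $W$-saturating. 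The parenthetical ``the direct sum of caps is a cap'' is then immediate from Part~(2) of Proposition~\ref{product}, since every $w\in W$ has coordinate sum $1$.

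For the final assertions I would simply solve $\lambda_1+\lambda_2=1$ with $\lambda_1,\lambda_2\in\F_q^*$. When $q=3$ the only solution is $(2,2)$; when $q=4$, writing $\F_4=\{0,1,i,1+i\}$, the solutions are $(i,1+i)$ and $(1+i,i)$, which we treat as a single vector since the two summands are interchangeable in the definition of a $W$-saturating set. In both cases $W$ is a single vector $w=(\lambda_1,\lambda_2)$ with $\lambda_1+\lambda_2=1$, so Part~(3) of Proposition~\ref{product} gives that $H\times H'$ is $W$-saturating whenever $H$ and $H'$ are, i.e. the direct sum of saturating sets of $\AG(m,q)$ and $\AG(n,q)$ is a saturating set of $\AG(m+n,q)$; together with Part~(2) (direct sums of caps are caps) this yields that direct sums of complete caps are complete caps for $q\in\{3,4\}$.

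I do not expect a genuine obstacle: the statement is a dictionary between two vocabularies that the earlier part of the section has already aligned. The only places calling for a little care are the bookkeeping of the nonvanishing conditions $\lambda_1,\lambda_2\neq 0$ (which encode ``pairwise distinct'' on the avoiding side and ``$x$ off its two marked points'' on the saturating side) and, in the case $q=4$, the harmless identification of $(i,1+i)$ with $(1+i,i)$ so that Part~(3) of Proposition~\ref{product}, which is stated for a single vector, can be invoked.
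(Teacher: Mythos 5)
Your proposal is correct and matches the paper's (omitted) argument: the paper marks this Proposition with \qed, treating it as immediate from the definitions together with the remark after Proposition \ref{product} that $\lambda_1+\lambda_2=1$ with $\lambda_1,\lambda_2\neq 0$ characterizes exactly the affine combinations placing a point on the line through two distinct points, off those two points. Your bridge lemma, the unwinding of the definitions, and the computation of $W$ for $q\in\{3,4\}$ (including the harmless identification of $(i,1+i)$ with $(1+i,i)$) are precisely the details the paper leaves to the reader.
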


	\subsection{Related results on solving linear equations in algebraic structures}
	
	We summarize some results which are connected to the theme of this paper in the sense that the subject is a subset of a set, in which an equation of special form has no non-trivial solutions. Then we also mention some results of saturation type with respect to an equation.
	
	Most probably the leading examples for the first theme are the Sidon sets.
	A set of elements in an abelian group is called a Sidon set if all pairwise sums of its not necessarily distinct elements are distinct. Equivalently, the equation 
	$a+b=c+d$  has only the trivial solution 
	$\{a, b\}= \{c,d\}$ in the set. Observe that Sidon sets are $3$-$\mathrm{AP}$ free.
	Concerning Sidon sets, Erdős and Turán  observed  \cite{ET} (see also Cilleruelo  \cite{Cille}) that the point set of the parabola in $\F_q\times \F_q$ provides a Sidon set. Cilleruelo showed several further abelian groups admitting Sidon sets of size equal roughly to the square root of the order of the group.  Building on his observations, Huang, Tait and Won showed \cite{Tait} that the largest Sidon sets in $\F_3^n$ are of size $3^{n/2}$, provided that $n$ is even.
	Small complete Sidon sets of abelian $2$-groups are investigated in the recent paper of G. Nagy \cite{GNagy} who showed constructions gained from elipses and hyperbolas in the finite affine plane $\F_q\times \F_q$, and in the papers \cite{Pott} and \cite{red}.
	
	
	
	There is a strong connection between  the case of vector space or cyclic group setting 
	and the case of integer setting, when  a non-trivial solution of a particular equation is forbidden within the interval
	$[1,n]\subset \mathbb{Z}$. 
	For Sidon sets, the papers of Ruzsa \cite{Ruzsa2, Ruzsa} discuss the case of small complete structures, while the  work of Kiss, Sándor and Yang \cite{SandorCs} deals with small saturating sets with respect to $3$-$\mathrm{AP}$s. 
	They used the term \textit{$3$-$\mathrm{AP}$ covering} sequence for another related concept. 
	Let $A_0=\{a_1<\ldots<a_t\}$
	be a set of nonnegative integers such that $\{a_1<\ldots<a_t\}$
	does not contain a 3-term arithmetic progression. A sequence $A=\{a_1,a_2,\ldots\}$
	is called the Stanley sequence of order  $3$ generated by $A_0$, where  the elements outside $A_0$
	are defined by a greedy algorithm as follows. For any $l\ge t$, $a_{l+1}$  is the smallest integer $a>a_l$
	such that $\{a_1,\ldots,a_l\}\cup\{a\}$
	does not contain a $3$-term arithmetic progression. Moreover, a sequence $A$ of non-negative integers is called a $3$-$\mathrm{AP}$-covering sequence if there exists an integer $n_0$
	such that, if $n>n_0$, then there exist $a_1, a_2 \in A$ such that $a_1, a_2, n$
	form a $3$-term arithmetic progression.
	Fang \cite{Fang} made the following improvement.

	\begin{theorem}[Fang \cite{Fang}]\label{fange}
		There is a $3$-$\mathrm{AP}$ covering sequence $S$ of integers such that  $$ \frac{ \mid S \cap [1,n]\mid}{\sqrt{n}}\leq \frac{8}{\sqrt{5}} \approx 3.578$$ holds for all $n$. 
	\end{theorem}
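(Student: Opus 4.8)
The plan is to reduce the problem to the existence of a thin additive $2$-basis of $\mathbb{N}$, exploiting the trivial identity that for distinct non-negative integers $a,b$ the three numbers $2a$, $a+b$, $2b$ form a $3$-term arithmetic progression (common difference $b-a\neq 0$, all three terms distinct because $a\neq b$), and that for any $a>0$ the numbers $0$, $a$, $2a$ form one as well. So I would let $A\subseteq\mathbb{N}$ be an additive $2$-basis of $\mathbb{N}$ with $0\in A$, i.e.\ $A+A\supseteq\{n\in\mathbb{N}: n\ge n_0\}$, and set $S:=A\cup 2A$ where $2A=\{2a:a\in A\}$. Then $S$ is a $3$-$\mathrm{AP}$ covering sequence: fix $n>n_0$ and write $n=a+b$ with $a,b\in A$; if $a\neq b$, then $\{2a,\,n,\,2b\}$ is a genuine $3$-$\mathrm{AP}$ whose two outer terms lie in $2A\subseteq S$, while if $a=b$ (so $n=2a$, $a\in A\setminus\{0\}$), then $\{0,\,a,\,n\}$ is a $3$-$\mathrm{AP}$ with $0,a\in A\subseteq S$. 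Either way $n$ lies in a $3$-$\mathrm{AP}$ with two elements of $S$, so there are no exceptional cases and no perturbation of $A$ is needed.

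Next I would bound the counting function. Since $2A\subseteq 2\mathbb{N}$, we have $|S\cap[1,n]|\le |A\cap[1,n]|+|A\cap[1,\lfloor n/2\rfloor]|$, so if the basis satisfies a uniform estimate $|A\cap[1,m]|\le C\sqrt m$ for all $m\ge 1$, then $|S\cap[1,n]|\le C\bigl(1+\tfrac{1}{\sqrt2}\bigr)\sqrt n$ for all $n$. It therefore suffices to produce an additive $2$-basis of $\mathbb{N}$ with $C$ small enough that $C(1+1/\sqrt2)\le 8/\sqrt5$, i.e.\ roughly $C\lesssim 2.1$. For this I would start from the Mrose--Fried interval construction \cite{Mrose, Fried}, which realises $r_k\ge\tfrac27 k^2+O(k)$ and hence gives a $2$-basis of $[0,N]$ with at most $(\sqrt{7/2}+o(1))\sqrt N\approx 1.87\sqrt N$ elements, and then splice scaled translates of it along a suitable sequence of scales so that the partial sumsets overlap and eventually tile $\mathbb{N}$, as in \cite{Habsi, Hof}. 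Tuning the scale ratio and the first few blocks by hand then yields the uniform bound $|A\cap[1,m]|\le C\sqrt m$ with the required $C$ (note that even the asymptotic constant $1.87\cdot(1+1/\sqrt2)\approx 3.19$ already lies below $8/\sqrt5\approx 3.578$, leaving room for the overshoot).

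The main obstacle is exactly this last point: passing from the \emph{asymptotic} density of a Mrose-type basis to a bound valid for \emph{every} $m$. The density of a block construction overshoots its limiting value both on the initial segment and near the junctions between consecutive blocks, so one must balance block lengths against the amount of overlap needed to glue them into a basis of all of $\mathbb{N}$, and absorb the initial irregularities into the constant; this bookkeeping is what forces a clean but non-optimal constant such as $8/\sqrt5$ rather than the asymptotically best one. Everything else — the reduction via $S=A\cup 2A$ and the covering verification above — is short and elementary.
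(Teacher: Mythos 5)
The statement you are proving is quoted from Fang's paper, and the argument there (essentially reproduced in this paper's Theorem \ref{gyok3} and Table \ref{tab:my_label1}) is a direct base-$4$ digit construction: one takes $F=\bigcup_{l}\{u4^l+v_{l-1}4^{l-1}+\cdots+v_0 : v_i\in\{2,3\},\ u\in\{1,2,3,4\}\}$, checks digit by digit via the table $k_i=2a_i-b_i$ that every $n$ can be written as $2a-b$ with $a,b\in F$, and reads off the count $|F\cap[1,n]|\le\sqrt{12.8\,n}=\tfrac{8}{\sqrt5}\sqrt n$ explicitly. Your reduction is genuinely different, and its qualitative half is correct: for a $2$-basis $A\ni 0$ the set $S=A\cup 2A$ is indeed a $3$-$\mathrm{AP}$ covering sequence, since $\{2a,\,a+b,\,2b\}$ is a $3$-$\mathrm{AP}$ through $n=a+b$ when $a\ne b$ and $\{0,a,2a\}$ handles $n=2a$; the counting inequality $|S\cap[1,n]|\le|A\cap[1,n]|+|A\cap[1,\lfloor n/2\rfloor]|$ is also right.

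The gap is the quantitative step you defer to ``splicing and tuning'': you need a $2$-basis of all of $\mathbb{N}$ satisfying the \emph{uniform} bound $|A\cap[1,m]|\le C\sqrt m$ for every $m\ge1$ with $C\le\frac{8}{\sqrt5}\big/\left(1+\tfrac1{\sqrt2}\right)\approx 2.096$, and no such statement is available off the shelf. Mrose's $(\sqrt{7/2}+o(1))\sqrt N\approx 1.871\sqrt N$ bound is for a basis of a single interval $[0,N]$; the extensions to bases of $\mathbb{N}$ in the works you cite are asymptotic ($\limsup$) statements, and gluing scaled blocks forces them to overlap, which inflates the counting function near the junctions and on the initial segment. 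Whether that inflation stays within the roughly $12\%$ margin between $1.871$ and $2.096$ \emph{uniformly in $m$} is precisely the content of the theorem, and you do not carry it out; as written, your argument shows only that \emph{some} absolute constant works, not the stated $8/\sqrt5$ for all $n$. If the goal is a clean explicit constant valid for every $n$, the digit construction is the cheaper path, since its uniform bound follows from summing a geometric series.
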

	\noindent This constant cannot be improved to $1.77$ \cite{SandorCs}.
	Note that this  result is strongly related to our main problem since it in turn shows that  $\sat((2,-1), \mathbb{Z}_n )\leq (3.578+o(1))\sqrt{n}$.

	\section{\texorpdfstring{Constructions in $\F_q \times \F_q$ and in other direct products}{Construction in Fq x Fq and in other direct products}} 
	
	In this section $q$ always denotes a prime power, and $p$ denotes a prime.

	\begin{constr}\label{parabola}
		Let $\mathcal{P}$ be the point set of the parabola 
		\[\{(x,x^2) : x \in \F_q\}.\]
	\end{constr}
	
	\begin{theorem}\label{main1} Let $q$ be an odd prime power.
		If $-2$ is not a square in $\F_q$ then Construction \ref{parabola} is a complete $3$-$\mathrm{AP}$ free subset of $\F_q\times \F_q$, hence $a(3-\mathrm{AP}, \F_q^2)\leq q.$
	\end{theorem}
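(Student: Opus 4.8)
The plan is to check the two halves of the definition of ``complete $3$-$\mathrm{AP}$ free'' separately for the parabola $\mathcal{P}=\{(x,x^2):x\in\F_q\}$: that $\mathcal{P}$ contains no $3$-$\mathrm{AP}$, and that $\mathcal{P}$ is $3$-$\mathrm{AP}$ saturating. Together with $|\mathcal{P}|=q$ this gives $a(3\text{-}\mathrm{AP},\F_q^2)\le q$. Throughout I use Observation \ref{obs1} to reduce ``$3$-$\mathrm{AP}$ saturating'' to the two algebraic conditions $(i)$ $x=2a_1-a_2$ and $(ii)$ $2x=a_1+a_2$ with $a_1\ne a_2$ in $\mathcal{P}$; note that every nonzero vector of $\F_q\times\F_q$ has order $p\ge 3$, so every such representation genuinely yields a $3$-$\mathrm{AP}$.

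For the $3$-$\mathrm{AP}$-free part: if $(x_1,x_1^2),(x_2,x_2^2),(x_3,x_3^2)$ were a $3$-$\mathrm{AP}$ with middle term the second, then $x_1+x_3=2x_2$ and $x_1^2+x_3^2=2x_2^2$; squaring the first relation and substituting the second gives $(x_1-x_3)^2=0$, so $x_1=x_3$ and the three points coincide, a contradiction. This step, which is just the Sidon-type property of the parabola, uses nothing about $-2$.

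For saturation, fix $x=(u,v)$ with $v\ne u^2$ and set $w:=v-u^2\ne 0$. Since $x\notin\mathcal{P}$, any $a_1,a_2\in\mathcal{P}$ with $a_1\ne a_2$ are automatically distinct from $x$, so it suffices to produce $a_1=(x_1,x_1^2)$, $a_2=(x_2,x_2^2)$ with $x_1\ne x_2$ satisfying $(i)$ or $(ii)$. Imposing $(ii)$ forces $x_1+x_2=2u$ and $x_1x_2=2u^2-v$, i.e. $x_1,x_2$ are the roots of $T^2-2uT+(2u^2-v)$, of discriminant $4w$; imposing $(i)$ forces $x_2=2x_1-u$ and then $2x_1^2-4ux_1+(u^2+v)=0$, of discriminant $-8w$. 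In each case a nonzero discriminant gives two distinct roots, and a short check (using $w\ne0$) confirms the resulting pair has $x_1\ne x_2$. Hence it is enough that one of $4w,-8w$ be a nonzero square in $\F_q$: writing $\chi$ for the quadratic character, $\chi(4w)=\chi(w)$ while $\chi(-8w)=\chi(-2)\chi(w)=-\chi(w)$ because $-2$ is a non-square, so exactly one of the two is a square, and since $w\ne 0$ it is a nonzero square. This finishes the saturation argument.

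The whole argument is elementary; the one conceptual point — and the unique place the hypothesis enters — is that the discriminants of the two possible shapes of a $3$-$\mathrm{AP}$ through an off-parabola point differ by the factor $-8=(-2)\cdot 4$, i.e. by $-2$ up to squares, so non-squareness of $-2$ is exactly what guarantees that for every $x\notin\mathcal{P}$ precisely one shape is realizable (the midpoint representation $(ii)$ when $w$ is a square, the endpoint representation $(i)$ when $w$ is a non-square). I do not anticipate any genuine obstacle beyond the bookkeeping needed to confirm distinctness of the roots in each of the two cases.
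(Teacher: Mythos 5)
Your proposal is correct and follows essentially the same route as the paper: the same two systems (midpoint and endpoint representations of an off-parabola point), the same pair of discriminants differing by $-2$ up to squares, and the same use of the non-squareness of $-2$ to guarantee one of them is a square. The only difference is that you also verify the $3$-$\mathrm{AP}$-freeness of the parabola explicitly, which the paper delegates to the known Sidon property; this is a harmless (and arguably welcome) addition.
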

	\begin{remark}
		Note that  Construction \ref{parabola} provides an infinite family of small complete $3$-$\mathrm{AP}$ free subsets of $\F_q\times \F_q$.
		As observed by Erdős and Turán, the parabola construction provides also a (dense) Sidon set, see \cite{Eber, ET}.
	\end{remark} 
	
	
	\begin{proof}[Proof of Theorem \ref{main1}]
		For each $(a,b)\in \F_q^2$, $b\neq a^2$, we prove that one of the following systems of equations have a solution $(x,y)\in \F_q\times \F_q$. 
		\begin{equation*}   
			\begin{cases}
				x+y=2a \\
				x^2+y^2=2b\\
			\end{cases}
			\begin{cases}
				2y-x=a \\
				2y^2-x^2=b\\
			\end{cases}
		\end{equation*}
		This implies that no point $(a,b)$ outside $\cP$ can be added to the construction without violating the  $3$-$\mathrm{AP}$ free property.\\
		Solution for the first system exists if and only if $b-a^2$ is a square in $\F_q$. Indeed, in order to have a common solution, we should get a square value for the discriminant $16a^2-8\cdot(4a^2-2b)=16(b-a^2)$ of  $x^2+(x^2-4ax+4a^2)-2b=0$.\\
		Solution for the second system exists if and only if $2(a^2-b)$ is a square in $\F_q$. Indeed, in order to have a common solution, we should get a square value for the discriminant $16a^2-8\cdot(a^2+b)=8(a^2-b)$ of  $2y^2-(4y^2-4ay+a^2)-b=0$.\\
		If $-2$ is not a square, then either the first, or the second discriminant will be a square, providing a solution to one of the systems.
	\end{proof}
	
	Theorem \ref{main1} in turn implies the upper bound of Theorem \ref{main3AP} (1) on complete $3$-AP free sets in vector spaces once one notes that $-2$ is not a square element in $\mathbb{F}_p$ if and only if it is not a square in $\mathbb{F}_p^{2k+1}$.
	
	Now we show some saturating set constructions.
	
	\begin{constr}
		\label{lines} Let $\langle -2 \rangle$ denote the multiplicative (cyclic) subgroup of $\F_q^\times$, generated by $-2$, where char$(q)\neq 2,3$. Take a set of maximum size in each coset of $\langle -2 \rangle$ for which the equations $-2g=g'$, $4g=g'$ have no solutions within the set. Let $R$ denote the union of these sets.\\
		Let $\cL$ denote the set of point $\cL=\{(0,r) :  r\in \F_q^* \setminus R  \}\cup\{(r,0): r\in \F_q^* \setminus R  \}\subset \F_q \times \F_q$ 
	\end{constr}
	
	The following result is a reformulation of the upper bound of Theorem \ref{mainvectorspace} (1).
	
	\begin{prop}
		\label{linesp}
		Construction \ref{lines} contains $$2(q-1)\frac{o_q(-2)-\lfloor o_q(-2)/3\rfloor}{o_q(-2)}$$ elements and it saturates the $3$-$\mathrm{AP}$s of $\F_q\times \F_q$.  
	\end{prop}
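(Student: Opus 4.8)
Set $d:=o_q(-2)$; since $\mathrm{char}(q)\notin\{2,3\}$ we have $-2\neq 0$ and $d\geq 3$ (as $-2=1$ or $4=1$ would force $3=0$ in $\F_q$). The plan is threefold: (i) encode each coset of $\langle -2\rangle$ as $\Z/d\Z$ and translate the two equations in the definition of $R$ into a cyclic‑avoidance condition; (ii) read off $|\cL|$ from the resulting extremal problem; (iii) prove $3$-$\mathrm{AP}$ saturation by a short case distinction resting on one structural observation.

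For a coset $C=g_0\langle -2\rangle$ write $c_i:=(-2)^ig_0$ for $i\in\Z/d\Z$, so $C=\{c_0,\dots,c_{d-1}\}$ and multiplication by $-2$, resp.\ by $4=(-2)^2$, shifts the index by $1$, resp.\ by $2$. Thus for $g,g'\in C$ the relation $-2g=g'$ (resp.\ $4g=g'$) holds iff the indices differ by $1$ (resp.\ by $2$), so $S\subseteq C$ avoids both equations precisely when its index set $T\subseteq\Z/d\Z$ contains no two elements at cyclic distance $\le 2$; equivalently, the $|T|$ cyclic gaps between consecutive elements of $T$ are all $\ge 3$. These gaps sum to $d$, so $|T|\le\lfloor d/3\rfloor$, and $T=\{0,3,\dots,3(\lfloor d/3\rfloor-1)\}$ shows this is attained. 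Hence $|R\cap C|=\lfloor d/3\rfloor$ for each of the $(q-1)/d$ cosets, so with $B:=\F_q^*\setminus R$ we get $|R|=\frac{q-1}{d}\lfloor d/3\rfloor$ and $|\cL|=2|B|=2(q-1)\frac{d-\lfloor d/3\rfloor}{d}$.

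The observation used for saturation is: for every $g\in\F_q^*$ the three elements $g,(-2)^{-1}g,(-2)^{-2}g$ are distinct, lie in a common coset, and have consecutive indices there, so at most one of them lies in $R$. By Observation \ref{obs1} (using that $q$ is odd) it suffices to place every $(a,b)\notin\cL$ in a $3$-$\mathrm{AP}$ of $\F_q^2$ together with two distinct points of $\cL$. If $(a,b)=(0,r)$ with $r\in R$ (the case $(r,0)$ being symmetric), write $r=c_j$; then $j+1,j+2\notin T$, hence $c_{j+1},c_{j+2}\in B$, and $\frac12(c_{j+1}+c_{j+2})=\frac12(-2c_j+4c_j)=c_j$, so $(0,c_{j+1}),(0,r),(0,c_{j+2})$ is a $3$-$\mathrm{AP}$ whose first and last points lie in $\cL$. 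If $(a,b)=(0,0)$, choose any $r_1\in B$ (nonempty, since $\lfloor d/3\rfloor<d$ gives $B\cap C\neq\emptyset$): among $r_1/2,-r_1,2r_1$ — three distinct elements with consecutive indices in one coset — at most one lies in $R$, so some $r_2$ among them lies in $B$, and $\{(0,0),(r_1,0),(r_2,0)\}$ is a $3$-$\mathrm{AP}$ with $(r_1,0),(r_2,0)\in\cL$.

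It remains to saturate an off‑axis point $(a,b)$ (both coordinates nonzero): such a point can occur in a $3$-$\mathrm{AP}$ with two $\cL$-points only if one of these points is on each axis, and there are exactly three such configurations,
\[(2a,0),(a,b),(0,2b),\qquad (0,-b),(a/2,0),(a,b),\qquad (-a,0),(0,b/2),(a,b),\]
each a $3$-$\mathrm{AP}$ of $\F_q^2$ whose two points other than $(a,b)$ lie in $\cL$ precisely when $\{2a,2b\}\subseteq B$, resp.\ $\{a/2,-b\}\subseteq B$, resp.\ $\{-a,b/2\}\subseteq B$. Now $\{2a,-a,a/2\}=\{g,(-2)^{-1}g,(-2)^{-2}g\}$ with $g=2a$, and likewise $\{2b,-b,b/2\}$ with $g=2b$, so the observation yields that at most one of $2a,-a,a/2$ and at most one of $2b,-b,b/2$ lies in $R$. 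Suppose all three configurations fail. If $2a\in R$, then $-a,a/2\notin R$, and the failures of the last two configurations force $-b,b/2\in R$ — a contradiction. If $2a\notin R$, then $2b\in R$ by the failure of the first configuration, hence $-b,b/2\notin R$, and the failures of the last two configurations force $a/2,-a\in R$ — again a contradiction. So one configuration succeeds, which finishes the proof. The delicate step is precisely this last one: recognising that off‑axis points admit only the three mixed‑axis moves above, that $\{2a,-a,a/2\}$ and $\{2b,-b,b/2\}$ sit as consecutive triples inside $\langle -2\rangle$-cosets, and that the equations $-2g=g'$, $4g=g'$ defining $R$ are tuned exactly so that no coset meets such a triple in more than one point; the cardinality computation and the axis/origin cases are then routine.
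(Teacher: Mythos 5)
Your proof is correct and follows essentially the same route as the paper: the same cyclic-coset count giving $|R\cap C|=\lfloor o_q(-2)/3\rfloor$, and for off-axis points the same three mixed-axis $3$-$\mathrm{AP}$s combined with the observation that at most one element of $\{2a,-a,a/2\}$ (and of $\{2b,-b,b/2\}$) lies in $R$. The only divergence is the on-axis case, where the paper uses a pigeonhole on the $(q-1)/2$ midpoint pairs while you exhibit the explicit progression $-2r,\,r,\,4r$ (and its analogue at the origin); both arguments are valid, and yours has the mild advantage of not needing the inequality $|\F_q^*\setminus R|>q/2$.
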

	\begin{cor}
		If $3 \mid o_q(-2)$ then $|\cL|=\frac{4}{3}(q-1)$.
	\end{cor}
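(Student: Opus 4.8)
The plan is to read the equality straight off the count in Proposition \ref{linesp}. That proposition says $\cL$ has exactly $2(q-1)\,\frac{o_q(-2)-\lfloor o_q(-2)/3\rfloor}{o_q(-2)}$ elements. Under the hypothesis $3\mid o_q(-2)$ one has $\lfloor o_q(-2)/3\rfloor=o_q(-2)/3$, so the fractional factor equals $\tfrac23$ and the count collapses to $\tfrac43(q-1)$. Hence the corollary is an immediate arithmetic consequence of Proposition \ref{linesp}, and there is essentially no obstacle beyond that proposition itself.

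For completeness I would also spell out the self-contained reason behind the formula in the divisible case. Recall $|\cL|=2\,|\F_q^*\setminus R|=2\,(q-1-|R|)$, where $R$ is a union of one maximum-size subset from each of the $\frac{q-1}{o_q(-2)}$ cosets of $\langle -2\rangle$ in $\F_q^\times$, each such subset avoiding the relations $-2g=g'$ and $4g=g'$. Writing $o:=o_q(-2)$ and indexing the elements of a fixed coset by their exponents, so $g=c(-2)^i\mapsto i\in\mathbb{Z}_o$, the relation $-2g=g'$ becomes ``the indices of $g$ and $g'$ differ by $1$ modulo $o$'' and $4g=g'$ becomes ``the indices differ by $2$ modulo $o$''. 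Thus an admissible subset of a coset is precisely an independent set of the circulant graph $\mathrm{Cay}(\mathbb{Z}_o,\{\pm1,\pm2\})$. Any three cyclically consecutive indices form a triangle there, so an independent set meets each of the $o/3$ blocks $\{3j,3j+1,3j+2\}$ in at most one point, giving the bound $o/3$; the set $\{0,3,6,\dots,o-3\}$ attains it. Therefore $|R|=\frac{q-1}{o}\cdot\frac{o}{3}=\frac{q-1}{3}$, and $|\cL|=2\bigl(q-1-\frac{q-1}{3}\bigr)=\frac43(q-1)$.

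The one point that needs a little care is the claim that the maximum independent set of $\mathrm{Cay}(\mathbb{Z}_o,\{\pm1,\pm2\})$ has size exactly $o/3$ when $3\mid o$ (for the upper bound one uses the triangle/partition argument above, for the lower bound the explicit set). But this is exactly the information already packaged inside the floor function in Proposition \ref{linesp}, so in practice nothing new has to be proved — the corollary is just the specialisation of that count.
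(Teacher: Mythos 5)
Your proof is correct and matches the paper's: the corollary is indeed just the substitution $\lfloor o_q(-2)/3\rfloor=o_q(-2)/3$ into the count of Proposition \ref{linesp}, which the paper leaves as an immediate consequence. Your supplementary derivation of $|R|=\frac{q-1}{3}$ via independent sets of $\mathrm{Cay}(\mathbb{Z}_o,\{\pm1,\pm2\})$ is a correct restatement of the counting argument already given in the proof of that proposition (there phrased as ``at most one of $g,-2g,4g$ lies in $R$'' plus the explicit choice of the elements $(-2)^{3t-1}$), so nothing new is needed.
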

	
	\begin{proof}[Proof (of Proposition \ref{linesp})]
		First,  observe that the choice of $R$ ensures that for all $g\in \F_q\setminus \{0\}$, at least two of $g, -2g, 4g$ are admissible coordinates in $\cL$. This implies that at most $$(q-1)\frac{\lfloor o_q(-2)/3\rfloor}{o_q(-2)}$$ elements are contained in $R$. On the other hand, choosing  each element of form $(-2)^{3t-1} $ such that $ 0<3t\leq o_q(-2)$ in $\langle -2 \rangle$ and applying similar rule in each coset yield equality in the bound above. Then the cardinality of points in $\cL$ follows.\\
		Then take any point $(a,b)\in \F_q\times \F_q$, where $a\ne 0\ne b$ and suppose that the addition of $(a,b)$ to the construction does not create a $3$-$\mathrm{AP}$.
		Now take $$(0,2b), (a,b), (2a,0);$$
		$$(-a, 0), (0, b/2), (a,b);$$ and 
		$$(0,-b), (a/2, 0), (a, b)$$ which form three disjoint $3$-$\mathrm{AP}$s consisting of two points of of the axes and $(a,b)$. Here we use the fact  that $o_q(-2)>2$.
		Since at most one element of $\{a/2, -a, 2a\}$ and of $\{b/2, -b, 2b\}$ is contained in $R$, $\cL$ will contain at least $4$ of the points listed above thus together with $(a,b)$, a $3$-$\mathrm{AP}$ would be formed, a contradiction.\\
		Finally, suppose that $a=0$ or $b=0$. Then the addition of $(a,b)$ would again provide at least one $3$-$\mathrm{AP}$, since $(a,b)$ would induce $\frac{q-1}{2}$ pairs $P, P'$ on the axis incident to $(a,b)$ for which $(a,b)$ is the midpoint of $P$ and $P'$, but the number of points on the axis in $\cL$ is larger than $q/2$ thus by the pigeon-hole principle, there would be a pair $P, P'\in \cL$ for which $(a,b)$ is a midpoint, hence the addition of $(a,b)$ is not allowed.
	\end{proof}
	
	\begin{remark}
		If $q$ is a power of the prime $p$ then the multiplicative order of $-2$ in $\F_q^{\times}$ is the same as the multiplicative order of $-2$ in $\F_p^{\times}$.
	\end{remark}
	
	Proposition \ref{linesp} implies directly the upper bound of Theorem \ref{mainvectorspace}  (1) in view of the previous remark if we apply  $q=p^{k}$. 
	
	To get the upper bound when the dimension of the vector space is odd (Theorem \ref{mainvectorspace} (4)), we modify the construction in a way that it $(2, -1)$-saturates the whole space. It enables us to apply the direct sum construction once we have a suitable general upper bound on $\sat((2, -1), \F_p)$.

	\begin{constr}
		\label{lines(2,-1)} Let $\langle -2 \rangle$ denote the multiplicative (cyclic) subgroup of $\F_q^\times$, generated by $-2$, where char$(q)\neq 2,3$. Take a set of maximum size in each coset of $\langle -2 \rangle$ for which the equations $-2g=g'$,  have no solutions within the set. Let $R^*$ denote the union of these sets.\\
		Let $\cL^*$ denote the set of points $\cL^*=\{(0,r) :  r\in \F_q^* \setminus R^*  \}\cup\{(r,0): r\in \F_q^*   \}$. 
	\end{constr}

	\begin{prop}
		\label{linesp2}
		Construction \ref{lines(2,-1)} contains $$2(q-1)-\frac{(q-1)\lfloor o_q(-2)/2\rfloor}{o_q(-2)}$$ elements and it is a $(2, -1)$-saturating  set (and hence $3-\mathrm{AP}$ saturating set)  of $\F_q\times \F_q$.  
	\end{prop}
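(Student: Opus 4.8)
The plan is to follow the proof of Proposition \ref{linesp}, adjusting it in two ways: the construction is now asymmetric — the $x$-axis carries all of $\F_q^*$, the $y$-axis only $S:=\F_q^*\setminus R^*$ — and we must produce genuine $(2,-1)$-representations $x=2a'-a''$, not merely arbitrary $3$-$\mathrm{AP}$s through $x$. Write $\ell=o_q(-2)$; since $\mathrm{char}(q)\neq 2,3$ we have $\ell\geq 3$, so the maps $v\mapsto -2v$ and $v\mapsto -v/2$ are fixed-point-free on $\F_q^*$ and $-2v\neq -v/2$ for every $v\neq 0$. I would first dispose of the cardinality. Inside a coset $c\langle -2\rangle=\{c,(-2)c,\ldots,(-2)^{\ell-1}c\}$ the relation $g'=-2g$ links consecutive members of this cyclic list, so a subset of the coset with no solution of $-2g=g'$ is exactly an independent set of a cycle $C_\ell$, hence has at most $\lfloor \ell/2\rfloor$ elements, with equality attained; summing over the $(q-1)/\ell$ cosets gives $|R^*|=(q-1)\lfloor \ell/2\rfloor/\ell$, and since the two parts of $\cL^*$ are disjoint, $|\cL^*|=(q-1-|R^*|)+(q-1)$, which is the claimed value.

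For the saturation statement I would split $\F_q^2\setminus\cL^*$ into the origin, the points $(u,v)$ with $u\neq 0\neq v$, and the points $(0,v)$ with $v\in R^*$. The origin is immediate: $(0,0)=2(1,0)-(2,0)$ with $(1,0),(2,0)\in\cL^*$ distinct. For $(u,v)$ with $uv\neq 0$ the full $x$-axis is the point: I would use the two $3$-$\mathrm{AP}$s having $(u,v)$ as an \emph{endpoint}, $(u,v)=2(u/2,0)-(0,-v)$ and $(u,v)=2(0,v/2)-(-u,0)$. The $x$-axis points lie in $\cL^*$ unconditionally, and since $-v=-2(v/2)$ the pair $\{v/2,-v\}$ is a consecutive pair for the relation $g'=-2g$, so the independent set $R^*$ cannot contain both; hence $-v\in S$ or $v/2\in S$ and at least one of the two representations is legitimate. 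This is already a real departure from Proposition \ref{linesp}, where $(u,v)$ was allowed to be the midpoint of the $3$-$\mathrm{AP}$ — useless for $(2,-1)$-saturation.

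The actual content is the case $(0,v)$ with $v\in R^*$. A line through $(0,v)$ meets the $x$-axis in at most one point, so any admissible $3$-$\mathrm{AP}$ through it lies on the $y$-axis, and since $(0,v)$ must be an endpoint we need $v=2s-t$ with $s,t\in S$. The observation that makes this work is that the choice is forced: take $s=-v/2$ and $t=-2v$. Then $2s-t=-v+2v=v$ identically, $s\neq t$ (otherwise $3v=0$), and — the key point — $s$ and $t$ are exactly the two neighbours of $v$ in the coset-cycle containing $v$, because $-2v=-2\cdot v$ and $v=-2\cdot(-v/2)$; since $R^*$ is independent and $v\in R^*$, neither neighbour lies in $R^*$, so $s,t\in S$ and $(0,v)=2(0,-v/2)-(0,-2v)$ with both points in $\cL^*$. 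With the three cases settled, $\cL^*$ is $(2,-1)$-saturating, hence $3$-$\mathrm{AP}$ saturating by Observation \ref{obs1}. I expect the write-up to be short; the main care required is in the third case, checking that the two neighbours of $v$ are genuinely distinct nonzero elements (here $\mathrm{char}(q)\neq 2,3$ is used) and that membership of $R^*$ is controlled coset by coset, so that "independent in the coset-cycle" really forbids $R^*$ from meeting $\{-2v,-v/2\}$.
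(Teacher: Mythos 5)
Your proposal is correct, and for the generic points it coincides with the paper's argument: the paper also saturates each $(a,b)$ with $a\neq 0\neq b$ by the two endpoint pairs $(-a,0),(0,b/2)$ and $(0,-b),(a/2,0)$, using that $-b=-2(b/2)$ forces at least one of $b/2$, $-b$ to lie outside $R^*$; your cardinality count via maximum independent sets of the $o_q(-2)$-cycles is likewise the intended one. Where you go beyond the paper is in the remaining cases: the published proof is a single sentence and does not explicitly saturate the origin or the points $(0,v)$ with $v\in R^*$, whereas you supply the needed representation $(0,v)=2(0,-v/2)-(0,-2v)$, observing that both cycle-neighbours of $v$ must avoid $R^*$ and are distinct since $\mathrm{char}(q)\neq 3$. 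This is a genuine (if small) completion of the argument rather than a different route, and it is exactly the part that could not be borrowed from the midpoint/pigeonhole treatment of the axes in Proposition \ref{linesp}.
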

	
	\begin{proof}
		One should observe that each point $(a,b)$, $a\neq 0\neq b$ is $(2,-1)$-saturated by the pairs  $(-a, 0), (0, b/2)$ and 
		$(0,-b), (a/2, 0)$, and at least one of these pairs will be contained in the construction. The cardinality of $\cL^*$ follows similarly to that of $\cL$ in the proof of Proposition \ref{linesp}.
	\end{proof}
	
	The result above in turn implies the upper bound of Theorem \ref{mainvectorspace} (3). Then,  Theorem \ref{mainvectorspace} (4) follows from the direct sum construction, Proposition \ref{product}, applying it to Construction \ref{lines(2,-1)} with $q=p^k$ and the $(2, -1)$-saturating set construction for $\F_p$, given in the next section (see also Theorem \ref{maingroups} (1)). 
	
	
	
	Along the same lines, one can prove the existence of $3$-AP saturating sets in direct products of abelian groups.
	
	Let $A$ and $B$ denote two abelian groups (written additively) of orders $a$ and $b$, respectively, such that $\gcd(ab,6)=1$.
	
	For  any element $g$ which is not the neutral element of the group, put $D_g=\{g,-2g,4g,-8g,\ldots,-g/2\}$. Since $\gcd(6,ab)=1$, the elements $g,-2g,4g,-8g$ are pairwise distinct, so $|D_g| \geq 4$.
	
	For any element $g$ which is not the neutral element of the group, we denote by $R_g$ a subset of $D_g$ of maximum size such that the equations $x=-2y$ and $x=4y$ cannot be solved within $D_g$. 
	Note that \[\frac13 |D_g| \geq |R_g|=\lfloor\frac13|D_g|\rfloor\geq \frac15 |D_g|.\] 
	Using these notation, we have 
	
	\begin{theorem}
		\label{dirprod}
		Let $A$ and $B$ denote two abelian groups (written additively) of orders $a$ and $b$, respectively, such that $\gcd(ab,6)=1$.
		Then
		\[\cL=\{(r,0_B) : r \neq 0_A,\, r\notin R_g \mbox{ for each } g \in A\} \cup \{(0_A,r) : r \neq 0_B,\, r\notin R_g \mbox{ for each } g \in B\}\]
		is $3$-$\mathrm{AP}$ saturating in $A\times B$.
		On the size of $\cL$ we have
		\[\frac43 (\sqrt{|A\times B|}-1)\leq \frac23(a+b-2) \leq |\cL|\leq \frac45(a+b-2) .\]
		\qed
	\end{theorem}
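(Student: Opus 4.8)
The plan is to prove the two assertions of the theorem separately: the estimate on $|\cL|$ is a short count, while the saturation property is obtained by splitting according to how many coordinates of the point to be saturated vanish. (Throughout I fix, for each orbit of the map $z\mapsto -2z$ on the nonzero elements of $A$ or of $B$, one maximum subset $R_g$ of that orbit $D_g$; thus $R_A:=\bigcup_{0_A\neq g\in A}R_g$ and $R_B:=\bigcup_{0_B\neq g\in B}R_g$ are disjoint unions of one such set per orbit.) Summing $\frac15|D_g|\le|R_g|=\lfloor\frac13|D_g|\rfloor\le\frac13|D_g|$ over the orbits partitioning $A\setminus\{0_A\}$ gives $\frac15(a-1)\le|R_A|\le\frac13(a-1)$, and likewise for $B$. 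Since $\cL$ meets the $A$-axis in the $a-1-|R_A|$ points $(r,0_B)$ with $r\notin R_A\cup\{0_A\}$ and the $B$-axis analogously, one obtains $\frac23(a+b-2)\le|\cL|\le\frac45(a+b-2)$, and the leftmost inequality of the claimed chain is just $a+b\ge 2\sqrt{ab}=2\sqrt{|A\times B|}$.

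For the saturation property, first consider a point $(x,y)$ with $x\neq 0_A$ and $y\neq 0_B$; it lies off both axes, hence outside $\cL$. I would attach to it the three triples
\[\{(0_A,2y),(x,y),(2x,0_B)\},\qquad\{(-x,0_B),(0_A,y/2),(x,y)\},\qquad\{(0_A,-y),(x/2,0_B),(x,y)\},\]
each of which is a genuine $3$-$\mathrm{AP}$ of $A\times B$ — its three members are distinct and its common difference is nonzero, hence of order $\ge 3$ since $ab$ is odd — with $(x,y)$, $(0_A,y/2)$, $(x/2,0_B)$ as respective middle terms. Such a triple saturates $(x,y)$ once its two members on the axes both lie in $\cL$. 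The $A$-coordinates occurring on the $A$-axis are $2x,-x,x/2$: they are pairwise distinct (a coincidence would force $3x=0_A$, i.e.\ $x=0_A$, using $\gcd(a,6)=1$), and since $-x=(-2)(x/2)$ and $2x=(-2)^2(x/2)$ they form a block of three consecutive elements of $D_{x/2}$. Because the set $R_g$ attached to $D_{x/2}$ contains no pair $\{z,-2z\}$ and no pair $\{z,4z\}$, it meets $\{2x,-x,x/2\}$ in at most one element, so $|R_A\cap\{2x,-x,x/2\}|\le 1$; symmetrically $|R_B\cap\{2y,-y,y/2\}|\le 1$. An element of $R_A$ disables exactly one of the three triples and an element of $R_B$ disables exactly one, so at most two are disabled and at least one survives, producing the required $3$-$\mathrm{AP}$.

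Now suppose just one coordinate vanishes, say the point is $(x,0_B)$ with $x\neq 0_A$ (the case $(0_A,y)$ is symmetric). If $(x,0_B)\in\cL$ there is nothing to prove, so assume $x\in R_A$, say $x\in R_g$ with $D_g=D_x$. Then $-2x\notin R_g$ and $4x\notin R_g$ (otherwise $\{x,-2x\}$ or $\{x,4x\}$ would lie in $R_g$), and since $-2x,4x\in D_x$ they lie in no other $R_h$; hence $(-2x,0_B),(4x,0_B)\in\cL$, and $\{(4x,0_B),(x,0_B),(-2x,0_B)\}$ is a $3$-$\mathrm{AP}$ with common difference $(-3x,0_B)$ — nonzero of order $\ge 3$ as $a$ is odd and coprime to $3$ — which saturates $(x,0_B)$. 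Finally $(0_A,0_B)\notin\cL$ must be treated on its own: if $A$ is nontrivial, $\cL$ contains $a-1-|R_A|\ge\frac23(a-1)>\frac12(a-1)$ points of the $A$-axis, so the fixed-point-free involution $u\mapsto -u$ of $A\setminus\{0_A\}$ has an orbit $\{u,-u\}$ both of whose members give points of $\cL$, and $\{(u,0_B),(0_A,0_B),(-u,0_B)\}$ is the required $3$-$\mathrm{AP}$; when $A$ is trivial one argues identically on the $B$-axis, and when $A\times B$ is trivial there is nothing to prove.

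The substantive step is the generic case: the six axis members of the three triples must be arranged to be exactly the two consecutive blocks $\{2x,-x,x/2\}\subseteq D_{x/2}$ and $\{2y,-y,y/2\}\subseteq D_{y/2}$, after which the pigeonhole over the triples runs just as in the proof of Proposition \ref{linesp}. The size bound and the axis cases are then routine, the only delicate points being $|D_g|\ge 4$ and the non-vanishing of $3$-torsion in $A$ and $B$, both guaranteed by $\gcd(ab,6)=1$.
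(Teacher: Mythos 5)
Your proof is correct and follows exactly the route the paper intends: Theorem \ref{dirprod} is stated with its proof omitted as being ``along the same lines'' as Proposition \ref{linesp}, and your argument is precisely that adaptation — the three triples through $(x,y)$ meeting the two axes, the observation that $\{x/2,-x,2x\}$ are three consecutive elements of the cycle $D_{x/2}$ so that $R_A$ meets them in at most one point, and the resulting pigeonhole. Your explicit handling of the axis points via the $3$-$\mathrm{AP}$ $\{4x,x,-2x\}$ and of the origin via a pair $\{u,-u\}\subseteq\cL$ is a clean (and slightly more careful) version of what the paper does for the corresponding cases in Proposition \ref{linesp}.
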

	
	Note that if $|D_g|$ is the same for each $g\in A$ and $g\in B$ where $g$ is different from the neutral element, then the size of $\cL$ can be expressed via $a$, $b$ and $|D_g|$ for a single $g$. 
	
	This leads to the statement of Theorem \ref{mainvectorspace} (2) by choosing $A=\F_{p}^{k+1}$ and $B=\F_p^k$.

	For any element $g$ of a group $A$ which is not the neutral element of the group, we define $D_g$ as before and we denote by $R^*_g$ a subset of $D_g$ of maximum size such that the equations $x=-2y$ cannot be solved within $D_g$. As before, the size of $D_g$ is at least $4$ and hence
	\[\frac12 |D_g| \geq |R^*_g| \geq \lfloor \frac12|D_g| \rfloor \geq \frac25|D_g|.\]
	Using these notation we have
	
	\begin{theorem}
		\label{dirprod2}
		Let $A$ and $B$ denote two abelian groups (written additively) of orders $a$ and $b$, respectively, such that $a$ is odd and $\gcd(b,6)=1$. 
		Then
		\[\cL^*=\{(r,0_B) : r \neq 0_A,\, r\in A\} \cup \{(0_A,r) : r \neq 0_B,\, r\notin R^*_g \mbox{ for each } g \in B\}\]
		is a $(2,-1)$-saturating (and hence $3-\mathrm{AP}$ saturating) set of $A\times B$.
		On the size of $\cL^*$ we have
		\[a+\frac 12 b- \frac32 \leq |\cL^*|\leq a+\frac35 b - \frac85.\]\end{theorem}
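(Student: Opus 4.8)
The plan is to follow the proof of Proposition~\ref{linesp2}, with the field $\F_q$ replaced by $B$ and the coset structure of $\langle-2\rangle$ replaced by the orbit structure of the map $z\mapsto -2z$ on $B\setminus\{0_B\}$. Write $R^{*}:=\bigcup_{g\ne 0_B}R^{*}_g$, read as a maximum subset of $B\setminus\{0_B\}$ in which $x=-2y$ has no solution (the natural meaning of the $R^*_g$, chosen compatibly on each orbit). The orbits $D_g=\{g,-2g,4g,\dots\}$ partition $B\setminus\{0_B\}$, and each has size at least $4$, since $g,-2g,4g,-8g$ are pairwise distinct: any coincidence among them yields $3g=0_B$, $6g=0_B$, $9g=0_B$ or $12g=0_B$, each forcing $g=0_B$ as $\gcd(b,6)=1$; so $|D_g|\ge4$, and $|D_g|\ge5$ when $|D_g|$ is odd. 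Because $x=-2y$ keeps $x,y$ inside a common orbit, $R^{*}$ meets each cycle $D_g$ in a maximum independent set, so $|R^{*}\cap D_g|=\lfloor|D_g|/2\rfloor$ and hence $\tfrac25|D_g|\le|R^{*}\cap D_g|\le\tfrac12|D_g|$ (the left inequality uses $|D_g|\ge4$, resp.\ $\ge5$ if odd). The two families defining $\cL^{*}$ are disjoint, so $|\cL^{*}|=(a-1)+(b-1)-|R^{*}|$; summing $|D_g|$ over orbits gives $\sum|D_g|=b-1$, and the displayed bounds $a+\tfrac b2-\tfrac32\le|\cL^{*}|\le a+\tfrac{3b}{5}-\tfrac85$ follow at once.

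For the $(2,-1)$-saturation I would split $(x,y)\in(A\times B)\setminus\cL^{*}$ into three cases, producing an ordered pair $(p,p')$ of distinct points of $\cL^{*}$ with $(x,y)=2p-p'$. \textbf{Case $x\ne0_A$, $y\ne0_B$.} Here $(-x,0_B)$ and $(\tfrac12x,0_B)$ always lie in $\cL^{*}$, and $2(0_A,\tfrac12y)-(-x,0_B)=(x,y)=2(\tfrac12x,0_B)-(0_A,-y)$; so it is enough that $(0_A,\tfrac12y)\in\cL^{*}$ or $(0_A,-y)\in\cL^{*}$, i.e.\ that $\tfrac12y\notin R^{*}$ or $-y\notin R^{*}$. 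But $-y=-2\cdot\tfrac12y$, so $\tfrac12y$ and $-y$ lie in the same orbit; were both in $R^{*}$ they would lie in the same $R^{*}_g$ and violate the forbidden relation there. \textbf{Case $x=0_A$, $y\ne0_B$.} Such a point misses $\cL^{*}$ only if $y\in R^{*}$; then the same orbit argument, applied to $y=-2\cdot(-\tfrac12y)$ and to $-2y=-2\cdot y$, gives $-\tfrac12y\notin R^{*}$ and $-2y\notin R^{*}$, so $(0_A,-\tfrac12y),(0_A,-2y)\in\cL^{*}$; these are distinct because $-\tfrac12y=-2y$ would force $3y=0_B$, and $2(0_A,-\tfrac12y)-(0_A,-2y)=(0_A,y)$. \textbf{Case $x=0_A$, $y=0_B$.} Assuming $a\ge3$, fix $r\in A\setminus\{0_A\}$; then $(r,0_B),(2r,0_B)\in\cL^{*}$, they are distinct and $2r\ne0_A$ because $a$ is odd, and $2(r,0_B)-(2r,0_B)=(0_A,0_B)$. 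Thus $\cL^{*}$ is $(2,-1)$-saturating, and $3$-$\mathrm{AP}$ saturating by Observation~\ref{obs1}.

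The only delicate point is the bookkeeping with the sets $R^{*}_g$: everything hinges on the observation that the relation $x=-2y$ does not leave an orbit of $z\mapsto-2z$, so that the statement ``$\tfrac12y$ and $-y$ cannot both lie in (or both avoid) $R^{*}$'' genuinely reduces to independence inside a single cycle; once this is isolated, the three cases are routine. The remaining loose end is the degenerate case $a=1$ (if one wishes to include it): then the first family of $\cL^{*}$ is empty, the cases with $x\ne0_A$ do not occur, the case $y\in R^{*}$ goes through verbatim, and only the origin needs a separate short argument, which can be supplied by a counting/parity argument inside the cycles $D_g$ — or one simply assumes both groups nontrivial, which holds in all applications since $\gcd(b,6)=1$ forces $b\ge5$.
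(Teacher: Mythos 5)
Your proposal is correct and is exactly the argument the paper intends: the paper states Theorem~\ref{dirprod2} without a written proof, relying on the preceding bounds $\frac25|D_g|\le|R^*_g|\le\frac12|D_g|$ for the size estimate and on the saturating pairs $(0_A,\tfrac12 y),(-x,0_B)$ and $(\tfrac12 x,0_B),(0_A,-y)$ from Proposition~\ref{linesp2}, which is precisely what you reconstruct, including the key observation that $\tfrac12 y$ and $-y=-2\cdot\tfrac12 y$ are adjacent in a single orbit of $z\mapsto -2z$ and so cannot both lie in $R^*$. Your extra care with the points $(0_A,y)$ for $y\in R^*$ and with the origin (and the flagged degenerate case $a=1$, which does not occur in the paper's applications) only makes the argument more complete.
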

	\qed

	
	

	\section{\texorpdfstring{Complete $3$-$\mathrm{AP}$ free sets and saturation in abelian groups}{Complete 3-AP free sets and saturation in abelian groups}}

	\subsection{Probabilistic upper bound on saturating sets}
	
	
	We start with a general bound using probabilistic arguments and prove Theorem \ref{proby}. While it is off by a logarithmic factor from the lower bound, it is still the best we know in several cases (although not in vector spaces). It also highlights the algebraic nature of constructions meeting or being close to the lower bound.

	\begin{theorem}\label{random_satu} Suppose that the set $H$ saturates the $3$-$\mathrm{AP}$s in  the abelian group $G$ of order $n$, $n>5$ odd, and $H$ is of minimum size. Then we have $$|H|\leq \sqrt{(n-1)\ln{(n-1)}}+ \sqrt{(n-1)}+1.$$    
	\end{theorem}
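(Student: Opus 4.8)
The plan is to construct, by a random choice followed by a deterministic correction, a subset of $G$ that $(1/2,1/2)$-saturates $G$. Since $|G|=n$ is odd, Observation~\ref{obs1} (case $(ii)$) guarantees that any $(1/2,1/2)$-saturating set is automatically $3$-$\mathrm{AP}$ saturating, so it suffices to bound from above the minimum size of a $(1/2,1/2)$-saturating set. I would fix $p=\sqrt{\ln(n-1)/(n-1)}$, which lies strictly between $0$ and $1$ once $n\geq 7$ (as $0<\ln(n-1)<n-1$), and let $R\subseteq G$ be random with each element included independently with probability $p$.

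The heart of the argument is an independence observation that I would isolate first. For a fixed $x\in G$, the map $a\mapsto 2x-a$ is a fixed-point-free involution of $G\setminus\{x\}$ (a fixed point would force $2a=2x$, hence $a=x$ since $2$ is invertible in a group of odd order), so it splits $G\setminus\{x\}$ into $(n-1)/2$ pairwise disjoint pairs $\{a,2x-a\}$, each with $a\neq 2x-a$ and $\tfrac12 a+\tfrac12(2x-a)=x$. Disjointness makes the events ``$\{a,2x-a\}\subseteq R$'' mutually independent, so the probability that none of them occurs is exactly $(1-p^2)^{(n-1)/2}$. Calling $x$ \emph{bad} when $x\notin R$ and $x$ has no representation $\tfrac12 a+\tfrac12 b$ with $a\neq b$ in $R$, I then get $\Pr[x\text{ bad}]\leq(1-p^2)^{(n-1)/2}<e^{-p^2(n-1)/2}=1/\sqrt{n-1}$, whence $\E|B|<n/\sqrt{n-1}$ for the random set $B$ of bad elements.

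Next I would set $H=R\cup B$ and check it is $(1/2,1/2)$-saturating: if $x\notin H$ then $x\notin R$ and $x\notin B$, so by the definition of ``bad'' there are distinct $a,b\in R\subseteq H$ with $x=\tfrac12 a+\tfrac12 b$. Taking expectations, $\E|H|\leq\E|R|+\E|B|<np+n/\sqrt{n-1}$. Finally I would plug in $p$ and write $n=(n-1)+1$, so that $np=\sqrt{(n-1)\ln(n-1)}+\sqrt{\ln(n-1)/(n-1)}$ and $n/\sqrt{n-1}=\sqrt{n-1}+1/\sqrt{n-1}$; for $n\geq 7$ the leftover terms obey $\sqrt{\ln(n-1)/(n-1)}+1/\sqrt{n-1}=(\sqrt{\ln(n-1)}+1)/\sqrt{n-1}<1$ (this quantity is decreasing in $n$ and already below $1$ at $n=7$), which yields $\E|H|<\sqrt{(n-1)\ln(n-1)}+\sqrt{n-1}+1$. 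Hence some outcome produces a $3$-$\mathrm{AP}$ saturating set of size strictly below this bound, and since set sizes are integers the minimum size obeys the stated inequality.

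I expect the only genuinely delicate point to be the independence step: recognising that the midpoint pairs of $x$ form a perfect matching on $G\setminus\{x\}$ is exactly what upgrades the worthless union bound of order $1-\Theta(np^2)$ into the sharp factor $(1-p^2)^{(n-1)/2}$. Once that is in place, choosing $p$ and doing the bookkeeping to land on the precise constants is routine, and the monotonicity of saturation under enlarging $H$ makes the ``add the bad elements'' correction cost-free.
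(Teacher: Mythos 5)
Your proposal is correct and follows essentially the same route as the paper: a random set with $p=\sqrt{\ln(n-1)/(n-1)}$, the observation that the midpoint pairs of a fixed $x$ partition $G\setminus\{x\}$ into $(n-1)/2$ disjoint pairs (giving the factor $(1-p^2)^{(n-1)/2}$ by independence), and a deterministic correction by adjoining the unsaturated elements. The paper likewise notes that the argument really bounds $\sat((1/2,1/2),G)$, so your restriction to midpoint representations matches its actual use of the alteration method.
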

	
	\begin{proof} The proof follows the probabilistic argument of \cite{Nagy} of the second author, on the size of saturating sets of projective planes.\\ Let $H_0$ be a random subset of $G$ 
		consisting of elements $g\in G$ where each element is chosen independently, uniformly at random with probability $p$. The parameter $p$ will be determined later on.
		Let $H_1$ be the set of elements $g\in G$ which  can be obtained as 
		$2g= h+h'$ or $g= 2h-h'$ for $h,h'\in H_0$.
		Let $X$ denote the random variable which takes the cardinality of $H_0$ and $Y $ denote the random variable which takes the cardinality of $H_1$. 
		
		Then $H_0\cup (G\setminus H_1)$ will provide a set $H$ that saturates the $3$-$\mathrm{AP}$s in $G$. We will determine the value of $p$ which minimise the expected value of $X+n-Y$.
		Clearly, $\E(X)=pn$. \\
		We call a pair $g_1, g_2$\textit{ induced by $g$} if $g_1+g_2=2g$. Hence each element of $G\setminus \{g\}$ is contained in exactly one pair induced by a fixed element $g$. If $g\not \in H_1$ then $H_0$ contains at most one element from each pair induced by $g$.
		Thus $$\pP(g\not \in H_1)<(1-p^2)^{\frac{1}{2}(n-1)}.$$
		By the linearity of expectation, we get 
		$$\E(X+n-Y)<n\left(p+(1-p^2)^{\frac{1}{2}(n-1)}\right).$$
		If $p=\sqrt{\frac{\ln{(n-1)}}{{n-1}}}$, this provides the existence of a set which saturates $3$-$\mathrm{AP}$s and have cardinality at most $$n\left(\sqrt{\frac{\ln{(n-1)}}{{n-1}}}+\left(1-\frac{\ln{(n-1)}}{n-1}\right)^{\frac{n-1}{2}}\right)<\sqrt{(n-1)\ln{(n-1)}}+1+\sqrt{n-1},$$
		taking into account that 
		
		$\sqrt{\frac{\ln{(n-1)}}{{n-1}}}+\frac{1}{\sqrt{n-1}}<1$ and applying the Bernoulli bound $(1-\frac{x}{m})^m<e^{x}$ for $x=-\ln{(n-1)}$ and $m=n-1$.
	\end{proof}
	Actually this argument shows that $\sat((1/2, 1/2), G)\leq  \sqrt{(n-1)\ln{(n-1)}}+ \sqrt{(n-1)}+~1$. The same bound can be easily obtained for $w=(2, -1)$-saturation as well.
	
	\begin{remark}
		Using the Lovász local lemma, one can prove that the probability of  $\pP(g\not \in~ H_1)$ can be  bounded from below by $(1-p^2)^{c(n-1)}$ for some positive constant $c$, which implies that the order of magnitude of a random construction obtained as above will be $\Theta(\sqrt{n \ln {n}})$.
	\end{remark}

	\subsection{\texorpdfstring{Complete $(2,-1)$-avoiding sets of minimum size in cyclic groups via difference sets}{Small complete (2,-1)-avoiding sets in cyclic groups via difference sets}}
	
	
	The Singer difference sets of the cyclic group of order $q^2+q+1$, $q$ a prime power, provide maximal Sidon sets. These constructions inspire the construction below. We use without explicit reference the most well known facts concerning difference sets according to the Handbook of Combinatorial Designs \cite{handbook}.
	
	\begin{theorem}\label{diffset} 
		Put $M=2^{2n}+2^n+1$ and denote by $D'$ a Singer  $(M,2^n+~1,1)$-difference set of the cyclic group $(\mathbb{Z}_M,+)$. 
		Then $D'$ is a complete $3$-$\mathrm{AP}$ free subset of $\mathbb{Z}_M$. Moreover, $D'$ is complete $(2,-1)$-avoiding of size $\lceil \sqrt{M}\rceil$, so its size reaches the lower bound in Proposition \ref{satbound} part $(2)$. 
	\end{theorem}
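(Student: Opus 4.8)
The plan is to exploit two structural features of a Singer $(M,2^n+1,1)$-difference set $D'$: its defining $\lambda=1$ property, which immediately rules out $3$-term arithmetic progressions, and the fact that $2$ --- the characteristic of the field $\F_{2^n}$ over which the associated projective plane $\PG(2,2^n)$ is built --- is a numerical multiplier of $D'$, which is precisely what is needed for saturation.

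First I would settle the avoiding part. The fact that $D'$ is a $(M,2^n+1,1)$-difference set means that every nonzero $g\in\Z_M$ has a \emph{unique} ordered representation $g=d-d'$ with $d,d'\in D'$. If $a,b,c\in D'$ formed a genuine $3$-$\mathrm{AP}$, say $b-a=c-b=:g\neq 0$, then the two ordered pairs $(b,a)$ and $(c,b)$ would both represent $g$, a contradiction; hence $D'$ is $3$-$\mathrm{AP}$ free, equivalently $(2,-1)$-avoiding by Remark \ref{r1}. Since a complete $(2,-1)$-avoiding set is automatically complete $3$-$\mathrm{AP}$ free (its $(2,-1)$-saturation implies $3$-$\mathrm{AP}$ saturation by Observation \ref{obs1} and Remark \ref{rem}), it then suffices to prove that $D'$ is $(2,-1)$-saturating and to compute its size.

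For saturation I would use that $2$ is a multiplier of $D'$, so $2D'=D'+s$ for some $s\in\Z_M$, together with the elementary fact that any two translates of a planar ($\lambda=1$) difference set meet: if $u\neq v$ then $|(D'+u)\cap(D'+v)|=1$ because $u-v\neq 0$ has a unique difference representation from $D'$, while if $u=v$ the translates coincide. Now fix $x\in\Z_M\setminus D'$. The equation $x=2a-b$ with $a,b\in D'$ is equivalent to $2a\in 2D'\cap(x+D')$, since $2a\in 2D'$ always and $2a=x+b\in x+D'$. As $2D'=D'+s$ and $x+D'$ are both translates of $D'$, they meet; picking $2a$ in the intersection gives $a\in D'$ and $b:=2a-x\in D'$ with $x=2a-b$, and $a=b$ is impossible since it would force $x=a\in D'$. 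Hence $D'$ is $(2,-1)$-saturating, and by the previous paragraph it is complete $(2,-1)$-avoiding and complete $3$-$\mathrm{AP}$ free.

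Finally, $|D'|=2^n+1$, and from $(2^n)^2<M<(2^n+1)^2$ we get $\lceil\sqrt{M}\,\rceil=2^n+1=|D'|$, so $D'$ attains the lower bound of Proposition \ref{satbound}(2). The only nontrivial input above is that $2$ is a multiplier of the Singer difference set; this is classical ($p$ is a multiplier of the $\PG(2,p^f)$ Singer difference set) and I would quote it from the difference-set literature. If one prefers a self-contained argument, it follows from the concrete model $\Z_M\cong\F_{2^{3n}}^*/\F_{2^n}^*$ in which multiplication by $2$ on $\Z_M$ is induced by the Frobenius automorphism $y\mapsto y^2$ of $\F_{2^{3n}}$; using that squaring is a bijection of $\F_{2^n}$, this map sends $2$-dimensional $\F_{2^n}$-subspaces to $2$-dimensional $\F_{2^n}$-subspaces, hence permutes the lines of $\PG(2,2^n)$, i.e., the translates of $D'$.
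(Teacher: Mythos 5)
Your proof is correct and follows essentially the same strategy as the paper: the $\lambda=1$ property gives $3$-$\mathrm{AP}$-freeness, and the numerical multiplier $2$ (First Multiplier Theorem) gives $(2,-1)$-saturation, with the size computation $2^n<\sqrt{M}<2^n+1$ identical. The only (minor) difference is in how the multiplier is exploited: the paper passes to the translate $D$ of $D'$ fixed by the multiplier, i.e.\ $2D=D$, and writes $g=a-b$ with $a=2c$ to get $g=2c-b$, whereas you keep $D'$ itself, use $2D'=D'+s$, and invoke the fact that any two translates of a planar difference set intersect to solve $2a\in 2D'\cap(x+D')$ --- both implementations are valid.
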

	
	\begin{remark}
		Note that $M=2^{2n}+2^n+1$ is a prime for $n\in \{1,3,9\}$ and in these cases we obtain  complete $3$-$\mathrm{AP}$ free subsets in the corresponding finite fields of size $2^{2n}+2^n+1$. In general, $n$ needs to be a power of $3$ for this to hold.  Indeed, $M$ can be written as $M= \frac{2^{3n}-1}{2^n-1}$, and if there exists a proper divisor $d\mid 3n$ which is not a divisor of $n$, then $gcd(d,n)<d$. Now, by applying $gcd(2^d-1,2^n-1)=2^{gcd(d,n)}-1$, we get the identity \begin{equation}
			(2^{\gcd(d,n)}-1)\cdot r\cdot \frac{2^{3n}-1}{2^d-1}=(2^n-1)\cdot M,   
		\end{equation} where
		$2^d-1=r\cdot (2^{\gcd(d,n)}-1)$. Hence $r\mid M.$ But on the one hand,\\ $r\leq 2^d-1<2^{2n}<M$, on the other hand, $r>1$ as $\gcd(d,n)<d$.
	\end{remark}
	
	\begin{proof}[Proof of Theorem \ref{diffset}]
		According to the First Multiplier Theorem, for a translate $D$ of $D'$ it holds that $2D=D$. We will show that $D$ is complete $3$-$\mathrm{AP}$ free. Note that this implies that the translates of $D$ are complete $3$-$\mathrm{AP}$ free as well.
		
		
		First we show that $2a=b+c$ cannot hold with $a,b,c\in D$ pairwise distinct elements. Indeed, it would imply $a-b=c-a$, contradicting the fact that $D$ is a difference set. It follows that $D$ is $3$-$\mathrm{AP}$ free.
		
		Since for each $a\in D$ we have also $2a \in D$, in $D \cup \{0\}$ we have the $3$-$\mathrm{AP}$: $\{0,a,2a\}$. This shows $0\notin D$ and that $D$ saturates $\{0\}$.\\    
		Now take any $g\in \mathbb{Z}_M \setminus D$, $g\neq 0$. Then there exist $a,b\in D$  such that $a-b=g$ and since $2D=D$, we have also $a=2c$ for some $c\in D$, that is, $2c=b+g$. We cannot have $c=b$ since in that case $c=g \in D$, a contradiction.
		
		The size of $D$ reaches the lower bound in Proposition \ref{satbound} (2) since $2^n<\sqrt{M}<2^n+1=|D|$.
	\end{proof}
	
	\begin{corollary}
		For $p\in \{7,73,262657\}$, the minimum size of a complete $(2,-1)$-avoiding subset of $\F_p$ is $\lceil \sqrt{p}\rceil$.
	\end{corollary}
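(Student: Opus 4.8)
The plan is to read this off from Theorem~\ref{diffset} and Proposition~\ref{satbound}(2); no new idea is needed. First I would observe that the three primes in the statement are exactly the values of $M=2^{2n}+2^n+1$ for $n\in\{1,3,9\}$: one has $2^2+2+1=7$, $2^6+2^3+1=73$, and $2^{18}+2^9+1=262657$. That each of these is prime is the content of the remark following Theorem~\ref{diffset} (for $n=9$ this is a finite check: $262657$ has no prime factor up to $\lfloor\sqrt{262657}\rfloor=512$). Hence for each such $p$ the additive group of $\F_p$ is the cyclic group $\mathbb{Z}_p$, and since $p$ is odd the integers $2$ and $-1$ act on $\mathbb{Z}_p$ exactly as the nonzero field elements $2,-1\in\F_p^{*}$, so the notion of $(2,-1)$-avoiding subset of $\F_p$ coincides with the group-theoretic one of Theorem~\ref{diffset}.

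For the upper bound I would simply invoke Theorem~\ref{diffset} with $M=p$: a suitable translate $D$ of a Singer $(M,2^n+1,1)$-difference set is a complete $(2,-1)$-avoiding subset of $\F_p$, and its size is $|D|=2^n+1=\lceil\sqrt{M}\rceil=\lceil\sqrt{p}\rceil$, using $2^n<\sqrt{M}<2^n+1$. This exhibits a complete $(2,-1)$-avoiding set of the claimed size.

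For the matching lower bound: by Definition~\ref{defavoid} a complete $(2,-1)$-avoiding set is in particular $(2,-1)$-saturating, so Proposition~\ref{satbound}(2) applied with $w=(2,-1)$ in the group $\F_p$ shows that every complete $(2,-1)$-avoiding subset of $\F_p$ has at least $\lceil\sqrt{|\F_p|}\rceil=\lceil\sqrt{p}\rceil$ elements. Combining the two bounds gives that the minimum size is exactly $\lceil\sqrt{p}\rceil$.

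There is no real obstacle here; the entire weight of the argument is carried by Theorem~\ref{diffset} and by the elementary double-counting lower bound of Proposition~\ref{satbound}(2). The only points requiring care are the (finite) primality verification for $262657$ and the harmless identification of $(\F_p,+)$ with $\mathbb{Z}_p$ together with the fact that $2\neq 0$ in $\F_p$ for these odd primes, so that Theorem~\ref{diffset} transfers without change.
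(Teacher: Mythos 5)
Your proposal is correct and follows exactly the route the paper intends: the upper bound comes from Theorem~\ref{diffset} (the Singer difference set with numerical multiplier $2$ is a complete $(2,-1)$-avoiding set of size $2^n+1=\lceil\sqrt{M}\rceil$, and the remark after that theorem supplies the primality of $M$ for $n\in\{1,3,9\}$), while the lower bound is Proposition~\ref{satbound}(2) applied to the saturating property that completeness guarantees. Nothing is missing.
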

	
	
	
	In general one can prove the following, along the same lines.
	
	\begin{prop}
		If $D$ is a $(v,k,\lambda)$-difference set in the group $G$ with numerical multiplier $2$ then $D$ saturates the $3$-$\mathrm{AP}$s.  Moreover, if $\lambda=1$ also holds, then $D$ is a complete $3$-$\mathrm{AP}$ free set.
	\end{prop}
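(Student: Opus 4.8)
The plan is to follow the proof of Theorem \ref{diffset}, extracting exactly which properties of a Singer difference set were used. First I would record two general facts about a group $G$ of order $v$ that carries a difference set with numerical multiplier $2$. Since a numerical multiplier is by definition coprime to $v$, the order $v$ is odd; hence every nonzero element of $G$ has order at least $3$, so any two distinct elements of $G$ can serve as two terms of a genuine $3$-$\mathrm{AP}$, and $2$ is invertible modulo $v$. Secondly, by the multiplier theory already invoked in Theorem \ref{diffset} (a numerical multiplier fixes some translate of the difference set), there is a translate $\tilde D = D+d$ with $2\tilde D = \tilde D$. Because ``$3$-$\mathrm{AP}$ free'' and ``$3$-$\mathrm{AP}$ saturating'' are both translation-invariant properties of a subset (translating a configuration of a $3$-$\mathrm{AP}$ and two set-elements by $d$ preserves it), it suffices to prove both assertions for $\tilde D$ in place of $D$.

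Next I would show that $\tilde D$ is $3$-$\mathrm{AP}$ saturating. If $0 \notin \tilde D$, then for $x=0$ we may take any $a \in \tilde D$: since $a \ne 0$ and $2a \in 2\tilde D = \tilde D$, the triple $\{0,a,2a\}$ is a $3$-$\mathrm{AP}$ with two members in $\tilde D$. For $x \in G \setminus \tilde D$ with $x \ne 0$, I use that $\tilde D$ is a $(v,k,\lambda)$-difference set: the nonzero element $x$ admits (as $\lambda \ge 1$) a representation $x = a-b$ with $a,b \in \tilde D$, and necessarily $a \ne b$. Since $\tilde D = 2\tilde D$, write $a = 2c$ with $c \in \tilde D$; then $2c = b+x$, i.e. $c$ is the midpoint of $b$ and $x$. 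A short check gives that $b,c,x$ are pairwise distinct: $c=b$ forces $x = b \in \tilde D$; $c = x$ puts $x \in \tilde D$; and $b = x$ gives $2c = 2b$, hence $c=b$ because $v$ is odd. Thus $\{b,c,x\}$ is a $3$-$\mathrm{AP}$ through $x$ with $b,c \in \tilde D$, so $\tilde D$, and therefore $D$, $3$-$\mathrm{AP}$ saturates $G$.

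Finally, assuming $\lambda = 1$, I would show $\tilde D$ is $3$-$\mathrm{AP}$ free, which with the previous step yields completeness. Suppose $a,b,c \in \tilde D$ are pairwise distinct with $2a = b+c$; then $a-b = c-a$, and since $a \ne c$ the ordered pairs $(a,b)$ and $(c,a)$ are two distinct representations of the nonzero element $a-b$ as a difference of elements of $\tilde D$, contradicting $\lambda = 1$. Hence $\tilde D$ contains no $3$-$\mathrm{AP}$; being also $3$-$\mathrm{AP}$ saturating, $\tilde D$ is complete $3$-$\mathrm{AP}$ free, and so is its translate $D$.

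I do not expect a serious obstacle here, since the argument is essentially the one already carried out for Singer difference sets; the points that require a little care are the reduction to the translate fixed by $2$ via translation-invariance of the two properties, the separate (generic) case $0 \notin \tilde D$ in the saturation step, and the pairwise-distinctness verification there, where the hypothesis that $v$ is odd — equivalently that $2$ is invertible modulo $v$, which is automatic once $2$ is a numerical multiplier — is exactly what is needed.
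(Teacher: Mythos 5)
Your proposal is correct and is essentially the paper's intended argument: the paper gives no separate proof of this proposition, stating only that it follows ``along the same lines'' as Theorem \ref{diffset}, and your write-up is precisely that proof (pass to a translate fixed by the multiplier $2$, use $\lambda\ge 1$ and $2\tilde D=\tilde D$ to produce a midpoint configuration, and use $\lambda=1$ to rule out two representations of $a-b$), with the additional care about translation-invariance and the case $0\notin\tilde D$ correctly supplied.
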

	
	\begin{prop}
		If $D$ is a $(k^2+k+1,k+1,1)$-difference set in $G$, $0 \in D$, then $D$ is complete $(1,-1)$-avoiding of size $k+1$ and hence its size reaches the lower bound in Proposition \ref{satbound}. It follows that $a((1,-1),\mathbb{Z}_{k^2+k+1})=(k+1)$ if $k$ is a prime power.
	\end{prop}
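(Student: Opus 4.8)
The plan is to verify the two defining conditions of a complete $(1,-1)$-avoiding set directly from the difference set property with $\lambda=1$: every nonzero element of $G$ has a unique (ordered) representation $a=d_1-d_2$ with $d_1,d_2\in D$.

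First I would prove that $D$ is $(1,-1)$-avoiding. Suppose to the contrary that $a=a'-a''$ for some pairwise distinct $a,a',a''\in D$. Distinctness forces $a\neq 0$ (otherwise $a'=a''$) and $a''\neq 0$ (otherwise $a=a'$). Since $a$ is a nonzero group element, it has a unique representation as a difference of two elements of $D$; but $a=a-0$ is such a representation (here the hypothesis $0\in D$ is used, and $a$, $0$ are distinct because $a\neq 0$), while $a=a'-a''$ is another one. Uniqueness forces $(a',a'')=(a,0)$, contradicting $a''\neq 0$. Hence no such triple exists.

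Next I would prove that $D$ is $(1,-1)$-saturating. Take any $x\in G\setminus D$. Since $0\in D$ we have $x\neq 0$, so by the difference set property there are $a',a''\in D$ with $x=a'-a''$, and $a'\neq a''$ because $x\neq 0$; this is exactly the saturation requirement. Therefore $D$ is complete $(1,-1)$-avoiding, of size $|D|=k+1$. As $k^2<|G|=k^2+k+1<(k+1)^2$, we get $\lceil\sqrt{|G|}\rceil=k+1=|D|$, so $D$ attains the lower bound of Proposition~\ref{satbound}(2). Finally, for $k$ a prime power the Singer construction from $\PG(2,k)$ furnishes a $(k^2+k+1,k+1,1)$-difference set in the cyclic group $\mathbb{Z}_{k^2+k+1}$, and translating it through one of its points (a translate of a difference set is again a difference set with the same parameters) we may assume $0\in D$; combining this with the lower bound gives $a((1,-1),\mathbb{Z}_{k^2+k+1})=k+1$.

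I do not anticipate a genuine obstacle here: the argument only uses $\lambda=1$ together with the assumption $0\in D$. The single point that needs care is to record the conclusions $a\neq 0$ and $a''\neq 0$ \emph{before} invoking uniqueness of the representation, and, in the last step, to normalise the Singer difference set so that it contains the identity of $\mathbb{Z}_{k^2+k+1}$.
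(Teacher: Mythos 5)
Your proposal is correct and follows essentially the same route as the paper: saturation of each nonzero $x$ comes directly from the $\lambda=1$ difference set property, and avoidance comes from comparing the hypothetical representation $a=a'-a''$ with the representation $a=a-0$ (using $0\in D$) and invoking uniqueness. You simply spell out the details the paper leaves implicit, namely that $a\neq 0$ and $a''\neq 0$ must be recorded before appealing to uniqueness, the computation $\lceil\sqrt{k^2+k+1}\rceil=k+1$, and the normalisation of the Singer difference set by a translate so that it contains $0$.
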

	\begin{proof}
		By definition if $x\in G \setminus \{0\}$ then there exist $y,z \in D$ such that $x=y-z$. 
		
		Assume to the contrary $x=y-z$ for some pairwise distinct $x,y,z \in D$. Then $x-0=y-z$, contradicting the fact that $D$ is a $(k^2+k+1,k+1,1)$-difference set.
		
		The last part follows from the existence of Singer-difference sets. 
	\end{proof}
	
	Note that $0\in D$ can always be obtained since translates of $D$ are difference sets as well.

	\subsection{\texorpdfstring{$(1/2,1/2)$-saturating sets in cyclic groups via additive bases}{(1/2,1/2)-saturating sets in cyclic groups via additive bases}}

	We continue with upper bounds on $\sat(W,\F_p)$ for $W=\{(1/2, 1/2)\}$.
	
	Here we refer to a construction which provides a good upper bound for the solution of the postage stamp problem which is very closely related to finite additive basis, see \cite{Nath, Habsi, Hof}. Recall that the problem was described in the Introduction.

	
	Let $[a,(t), b]$ denote $\{a+t\cdot h: h\in \mathbb{Z}\}\cap [a,b]$.
	\begin{constr}[Mrose, \cite{Mrose}]
		\label{Mr}
		For an arbitrary positive integer $t$ take a set $S$ of $7t+2$ elements as $S=\bigcup_{j=1}^5 A^{(j)}$, where\\ 
		$A^{(1)}:=[0, (1), t],$\\ $ A^{(2)}:= [2t, (t), 3t^2+t],$\\ $ A^{(3)}:=[3t^2+2t, (t+1), 4t^2+2t-1],$\\ $ A^{(4)}:=[6t^2+4t, (1), 6t^2+5t],$\\ $A^{(5)}:=[10t^2+7t, (1), 10t^2+8t]$. 
	\end{constr}
	
	\begin{prop}[Mrose, \cite{Mrose}] \label{Mrose1}
		$(S+S)\supset [0, 14t^2+10t-1]$ holds for the Mrose construction $S$ with parameter $t$. 
	\end{prop}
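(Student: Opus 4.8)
The plan is to verify directly that every integer $m$ with $0 \le m \le 14t^2+10t-1$ can be written as a sum of two elements of $S = A^{(1)} \cup A^{(2)} \cup A^{(3)} \cup A^{(4)} \cup A^{(5)}$, by partitioning the target interval $[0,14t^2+10t-1]$ into consecutive blocks and, for each block, exhibiting which pair of pieces $A^{(i)}, A^{(j)}$ covers it. The key structural observation is that $A^{(1)} = \{0,1,\dots,t\}$ is a full interval of length $t+1$, so $A^{(i)} + A^{(1)}$ is a union of intervals around each element of $A^{(i)}$; since $A^{(2)}$ has common difference $t$ and $A^{(3)}$ has common difference $t+1$, adding $A^{(1)}$ to either of them produces a \emph{single} contiguous interval (the translated copies of $[0,t]$ overlap or abut). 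This is the workhorse: $A^{(2)}+A^{(1)}$ covers $[2t,\,3t^2+2t]$ and $A^{(3)}+A^{(1)}$ covers $[3t^2+2t,\,4t^2+3t-1]$, and together with $A^{(1)}+A^{(1)} = [0,2t]$ these three already tile $[0, 4t^2+3t-1]$ with no gaps.

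Next I would push to larger values by using $A^{(2)}$, $A^{(3)}$, $A^{(4)}$, $A^{(5)}$ as the "coarse" summand and $A^{(1)}$ (or small elements of the others) as the fine adjustment. Concretely: $A^{(4)}+A^{(1)}$ covers an interval around $[6t^2+4t,\,6t^2+6t]$; $A^{(2)}+A^{(2)}$, $A^{(2)}+A^{(3)}$, $A^{(3)}+A^{(3)}$ each give an interval (again because a common difference $t$ or $t+1$ added to a common difference $t$ or $t+1$, when the ranges are long enough, yields a full interval); $A^{(2)}+A^{(4)}$, $A^{(3)}+A^{(4)}$, $A^{(4)}+A^{(4)}$, $A^{(2)}+A^{(5)}$, $A^{(3)}+A^{(5)}$, $A^{(4)}+A^{(5)}$, $A^{(5)}+A^{(5)}$ cover the upper ranges, with $A^{(5)}+A^{(5)}$ reaching up to $20t^2+16t$, well past $14t^2+10t-1$. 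The endpoints of the blocks $A^{(2)},A^{(3)},A^{(4)},A^{(5)}$ were chosen by Mrose precisely so that the intervals produced by consecutive pairs in this list abut or overlap; I would tabulate, for $m$ running through $[0,14t^2+10t-1]$, the covering interval ranges
\[
[0,2t],\ [2t,3t^2+2t],\ [3t^2+2t,4t^2+3t-1],\ \dots,\ [\,\cdot\,,20t^2+16t],
\]
and check that consecutive ranges leave no gap, i.e. each new left endpoint is $\le$ the previous right endpoint $+1$.

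The main obstacle — and really the only content beyond bookkeeping — is verifying the overlap conditions at the seams between blocks, since these are the inequalities that force the specific coefficients $2t$, $3t^2+t$, $3t^2+2t$, $4t^2+2t-1$, $6t^2+4t$, $6t^2+5t$, $10t^2+7t$, $10t^2+8t$ and pin down why $7t+2$ elements suffice. Each seam reduces to a quadratic-in-$t$ inequality that must hold for all $t\ge 1$; I would check these case by case, and separately handle small $t$ (e.g. $t=1$) by direct enumeration if any inequality is only asymptotically valid. One should also confirm that within each $A^{(i)}+A^{(j)}$ claimed to be an interval, the "long enough range" hypothesis genuinely holds — e.g. that $A^{(2)}$, having $\sim 3t$ terms with gap $t$, is long enough that $A^{(2)}+A^{(2)}$ (gaps $t$, total span $\sim 6t^2$) fills an interval; this is immediate since consecutive sums differ by exactly $t$ and the number of terms is $\ge t$. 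With the seam inequalities in hand and the interval-filling lemma applied piece by piece, the union of all the $A^{(i)}+A^{(j)}$ is seen to contain $[0,14t^2+10t-1]$, completing the proof.
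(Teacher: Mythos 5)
The paper does not prove this proposition at all: it is quoted directly from Mrose's paper, so there is no internal argument to compare against. Your overall plan --- tabulate the sumsets $A^{(i)}+A^{(j)}$, show each relevant one is an interval, and check that consecutive intervals overlap --- is indeed the only reasonable way to prove the statement, and your first three blocks are right: $A^{(1)}+A^{(1)}=[0,2t]$, $A^{(1)}+A^{(2)}=[2t,3t^2+2t]$ and $A^{(1)}+A^{(3)}=[3t^2+2t,4t^2+3t-1]$ do tile $[0,4t^2+3t-1]$.

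However, there is a genuine error in the next step. You assert that $A^{(2)}+A^{(2)}$ and $A^{(3)}+A^{(3)}$ are intervals ``because a common difference $t$ or $t+1$ added to a common difference $t$ or $t+1$ yields a full interval,'' and you even justify this by saying ``consecutive sums differ by exactly $t$.'' That last remark is precisely why the claim is false: the sum of two arithmetic progressions with the \emph{same} common difference $t$ is again an arithmetic progression with common difference $t$, so $A^{(2)}+A^{(2)}$ misses every integer not congruent to $4t$ modulo $t$, and likewise $A^{(3)}+A^{(3)}$ has gaps of size $t$. The interval-filling in the critical middle range is carried entirely by the \emph{mixed} sum $A^{(2)}+A^{(3)}$: writing its elements as $3t^2+4t+(i+j)t+j$ with $0\le i\le 3t-1$, $0\le j\le t-1$, the $t$ available values of $j$ supply every residue class modulo $t$, which is exactly why $A^{(3)}$ was given $t$ terms of difference $t+1$; one then checks this contains $[4t^2+3t,\,6t^2+4t-1]$, sealing the seam after $4t^2+3t-1$. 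Similarly, the short intervals $A^{(4)}+A^{(4)}$, $A^{(4)}+A^{(5)}$, $A^{(5)}+A^{(5)}$ that you list have length only $2t$ and sit far apart; they cannot cover the long stretches above $6t^2+6t$. What actually finishes the job is $A^{(1)}+A^{(4)}$, $A^{(2)}+A^{(4)}=[6t^2+6t,9t^2+6t]$, $A^{(3)}+A^{(4)}=[9t^2+6t,10t^2+7t-1]$, $A^{(1)}+A^{(5)}$, $A^{(2)}+A^{(5)}=[10t^2+9t,13t^2+9t]$ and $A^{(3)}+A^{(5)}=[13t^2+9t,14t^2+10t-1]$, the last of which produces the exact right endpoint $14t^2+10t-1$. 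So the correct pairs do appear in your list, but the argument as written rests on a false interval claim and does not identify which sums actually close each seam; repairing it requires the residue-class analysis of the mixed sums rather than the uniform ``long enough range'' lemma you invoke.
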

	
	We apply this classical construction to prove the following upper bound for $\sat(3-\mathrm{AP}, \F_p)$ via $W$-saturation for $W=\{(1/2, 1/2)\}$.
	
	\begin{prop} Suppose that $m$ is odd. Then
		\[\sat(3-\mathrm{AP},\mathbb{Z}_m) \leq \sat((1/2, 1/2), \mathbb{Z}_m)\leq (\sqrt{3.5}+o(1))\sqrt{m}\approx 1.87\sqrt{m}.\]
	\end{prop}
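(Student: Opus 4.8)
The plan is to read off the first inequality from Observation \ref{obs1} and to obtain the second one by transplanting Mrose's interval construction (Construction \ref{Mr}, Proposition \ref{Mrose1}) into the cyclic group. Indeed, by Observation \ref{obs1} a set satisfying, for every point outside it, case $(ii)$ there (equivalently $x=\frac12 a_1+\frac12 a_2$) is in particular $3$-$\mathrm{AP}$ saturating, so $\sat(3-\mathrm{AP},\mathbb{Z}_m)\le\sat((1/2,1/2),\mathbb{Z}_m)$ holds for free and only the upper bound on $\sat((1/2,1/2),\mathbb{Z}_m)$ remains.

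First I would reduce the saturation requirement to an additive-basis statement. I claim that, because $m$ is odd, any $A\subseteq\mathbb{Z}_m$ with $A+A=\mathbb{Z}_m$ is automatically $(1/2,1/2)$-saturating. Given $x\notin A$, write $2x=a'+a''$ with $a',a''\in A$; if $a'=a''$ then $2(x-a')=0$, which forces $x=a'\in A$ since $2$ is invertible in $\mathbb{Z}_m$, a contradiction, so $a'\neq a''$, and then $x=\frac12 a'+\frac12 a''$ by the definition of halving in an odd-order group. Hence it suffices to exhibit an additive $2$-basis of $\mathbb{Z}_m$ of size $(\sqrt{3.5}+o(1))\sqrt m$.

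Next I would take Mrose's set $S=S(t)$ of Construction \ref{Mr} with parameter $t:=\lceil\sqrt{m/14}\,\rceil$. By Proposition \ref{Mrose1} we have $S+S\supseteq[0,14t^2+10t-1]$ as sets of non-negative integers, and since $t\ge\sqrt{m/14}$ gives $14t^2\ge m$, the integer interval $[0,m-1]$ is contained in $S+S$. Reducing everything modulo $m$ and letting $\bar S$ denote the image of $S$ in $\mathbb{Z}_m$, the sumset $\bar S+\bar S$ contains the image of $[0,m-1]$, which is all of $\mathbb{Z}_m$; hence $\bar S+\bar S=\mathbb{Z}_m$ and $\bar S$ is $(1/2,1/2)$-saturating by the previous paragraph. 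Note that only surjectivity of the reduction map on $[0,m-1]$ is used, and that coincidences among the residues of elements of $S$ can only decrease $|\bar S|$.

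Finally I would count: $|\bar S|\le|S|=7t+2\le 7\big(\sqrt{m/14}+1\big)+2=\sqrt{3.5}\,\sqrt m+9=(\sqrt{3.5}+o(1))\sqrt m$, which together with the first inequality gives the claim. There is no genuinely hard step here; the only points needing care are the choice of $t$ ensuring $14t^2\ge m$ (so that the Mrose interval already covers a full system of residues) and the elementary passage from ``$2$-basis'' to ``$(1/2,1/2)$-saturating'' that relies on $m$ being odd. The constant $\sqrt{3.5}\approx 1.87$ is exactly the one forced by the $|S|\approx 7t$ versus $|S+S|\approx 14t^2$ trade-off in Mrose's construction.
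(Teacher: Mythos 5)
Your proposal is correct and follows essentially the same route as the paper: reduce $(1/2,1/2)$-saturation to the $2$-basis property $S+S=\mathbb{Z}_m$ (using that $2$ is invertible in $\mathbb{Z}_m$ to rule out $a'=a''$), then plug in Mrose's construction with $t\approx\sqrt{m/14}$ and count $|S|=7t+2$. The only cosmetic difference is your choice $t=\lceil\sqrt{m/14}\,\rceil$ versus the paper's ``least $t$ with $14t^2+10t-1\ge m$''; both yield the same $(\sqrt{3.5}+o(1))\sqrt{m}$ bound.
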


	\begin{proof} Choose the least integer $t$ such that $14t^2+10t-1\geq m$ holds, i.e., $$14t^2+10t-1\geq m\ge 14(t-1)^2+10(t-1)-1.$$ Consider the set $S$  $\pmod m$ obtained in Construction \ref{Mr}. Since $S+S= \mathbb{Z}_m$, we also have $$\left\{\frac{s}{2}+\frac{s'}{2} \  : \  s, s'\in S\subset \mathbb{Z}_m\right\}=\mathbb{Z}_m,$$ since $\gcd(2,m)=1$. 
		Note that for $x\notin S$ and $x=s/2+s'/2$, $s,s' \in S$, we cannot have $s=s'$ and hence $x$ is saturated by two distinct elements of $S$. 
		Hence $S$ is a $(1/2, 1/2)$-saturating set in $\mathbb{Z}_m$ of size $|S|=7t+2$ while $m\ge 14t^2-18t+3> \frac{2}{7}|S|^2-4|S|.$ From this, we get that $\sat((1/2, 1/2), \mathbb{Z}_m)< 7+\sqrt{49+3.5m}.$ 
	\end{proof}

	\subsection{\texorpdfstring{Complete $(2,-1)$-avoiding and $(2,-1)$-saturating sets in cyclic groups}{Complete (2,-1)-avoiding and (2,-1)-saturating sets in cyclic groups}}
	
	We will say that $S$ $(2,-1)$-saturates $[x,y]$ if for each $z\in [x,y]$ there exist $a,b\in S$ such that $z=2a-b$. 
	
	\begin{remark}
		Every integer $1\leq k  \leq \frac43 (4^{n}-1)$ can be written in a unique way as 
		\[k=k_{l}4^{l}+\cdots +k_0 4^0,\]
		where $k_i\in \{1,2,3,4\}$ and $0 \leq l\leq n-1$. 
		This representation of positive integers is known as the bijective base-4 numeral system, see e.g, \cite{Smullyan}.
	\end{remark}

	\begin{constr}\label{z_m}
		
		Let
		\[H_l=\{v_{l-1}4^{l-1}+\cdots +v_04^0 : 
		v_i\in \{2,3\}\text{ for }i=0,1,\ldots ,l-1\},\]
		and
		\[K_l=\{v_{l-1}4^{l-1}+\cdots +v_04^0 : 
		v_i\in \{1,2,3,4\}\text{ for }i=0,1,\ldots ,l-1\},\]
		so $K_l$ is the set of integers with exactly $l$ digits in the bijective base-$4$ numeral system. 
	\end{constr}
	
	Note that $K_l=[\frac13(4^l-1),\frac43(4^l-1)]$, so $|K_l|=4^l$.
	
	The smallest integer of $H_l$ is $\frac23(4^l-1)$, the largest one is $4^l-1$, and $|H_l|=2^l$.
	
	\begin{theorem}
		\label{gyok3} \ \\ \vspace{-0.8cm}
		\begin{enumerate}[\rm(1)]
			\item The set $H_i \cup H_{i+1} \cup \ldots \cup H_{j}$ $(2,-1)$-saturates any subset of $K_i \cup K_{i+1} \cup \ldots \cup K_{j}$ for every pair of positive integers $i\leq j$.
		\end{enumerate}

		
		Given a positive integer $n$, let $m$ denote an integer such that $4^{n-1} < m \leq 4^n$.
		
		\begin{enumerate}[\rm(2)]
			\item If $m=4^n$, then consider the elements of $H_n$ and $K_n$ as representatives for the elements of $\mathbb{Z}_{4^n}$. The $2^n$ elements corresponding to $H_n$ form a complete $(2,-1)$-avoiding set in $\mathbb{Z}_m$.

			\item[\rm(3)] If $\frac13(4^n-1)+1\leq m<4^n$ then consider any interval $[x,y]$,  $H_n \subseteq [x,y] \subseteq K_n$, of size $m$ as a representative for $\mathbb{Z}_m$. Then the elements corresponding to $H_n$ form a $(2,-1)$-saturating set of size less than $\sqrt{3 m}$ in $\mathbb{Z}_m$.
			
			If $\frac23(4^n-1) < m$ then $H_n$ corresponds to a complete $(2,-1)$-avoiding set.
			
			\item[\rm(4)] If $4^{n-1}<m\leq \frac13(4^n-1)$, then let $1\leq k\leq n-1$ be maximal such that 
			\[4^{n-1}< m \leq (4^n-4^{k-1})/3.\]
			Then $S:=H_{k-1} \cup H_{k}\cup \ldots \cup H_{n-1}$ $(2,-1)$-saturates $I:=[\frac13(4^{k-1}-1),\frac13(4^{k-1}-1)+m-1]$ and has size less than $\sqrt{3m}$. Considering $I$ as representatives for $\mathbb{Z}_m$ the same holds for the elements corresponding to $S$. 
		\end{enumerate}
		
	\end{theorem}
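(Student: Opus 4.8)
The plan is to derive all four parts from one elementary digit identity in the bijective base-$4$ system. Given an integer $z$ with exactly $l$ digits, write $z=z_{l-1}4^{l-1}+\cdots+z_04^0$ with each $z_i\in\{1,2,3,4\}$, and set $a_i,b_i\in\{2,3\}$ to be $(2,3),(2,2),(3,3),(3,2)$ according as $z_i=1,2,3,4$. Then $2a_i-b_i=z_i$ digit by digit, no carries occur, so $a:=\sum_i a_i4^i$ and $b:=\sum_i b_i4^i$ lie in $H_l$ and $z=2a-b$; moreover $a=b$ only if every $z_i\in\{2,3\}$, i.e.\ $z\in H_l$. Since $a,b\in[\tfrac23(4^l-1),4^l-1]$ we also get $2a-b\in[\tfrac13(4^l-1),\tfrac43(4^l-1)]=K_l$. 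This is exactly part (1): for each $z\in K_l$ with $i\le l\le j$, either $z\in H_i\cup\cdots\cup H_j$, or $z=2a-b$ with $a,b$ distinct members of $H_i\cup\cdots\cup H_j$, the integer $2a-b$ remaining inside $K_l$.

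Next I would establish $3$-AP-freeness of each $H_l$ over $\mathbb{Z}$ by the highest-differing-digit argument: in a putative progression $a<b<c$ in $H_l$, let $r$ be the top position where the base-$4$ digits of $a,b,c$ differ; since a block of lower $\{2,3\}$-digits has value in $[\tfrac23(4^r-1),4^r-1]\subset[0,4^r)$, the digit at position $r$ governs the order and forces $a_r\le b_r\le c_r$, hence either $a_r=b_r=2<c_r=3$ or $a_r=2<b_r=c_r=3$; in the first case $c-b\ge 4^r-\tfrac13(4^r-1)>\tfrac13(4^r-1)\ge b-a$, in the second $b-a\ge 4^r-\tfrac13(4^r-1)>\tfrac13(4^r-1)\ge c-b$, so $b-a\ne c-b$, a contradiction. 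To pass to $\mathbb{Z}_m$ I would observe that all elements of $H_n$ lie in $[\tfrac23(4^n-1),4^n-1]$, so for any $x,y,z\in H_n$ the numbers $x+z$ and $2y$ lie in an interval of length $\tfrac23(4^n-1)$; thus if $m>\tfrac23(4^n-1)$ (in particular if $m=4^n$) the congruence $x+z\equiv 2y\pmod m$ forces $x+z=2y$, impossible for distinct $x,y,z$. Combined with part (1) applied to $K_n$, which is a full system of residues mod $4^n$, this proves part (2).

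For part (3) I would use that the span of $H_n$ as a set of integers equals $\tfrac13(4^n-1)+1\le m$ while $m<4^n=|K_n|$, so there is an interval $[x,y]$ with $H_n\subseteq[x,y]\subseteq K_n$ and $|[x,y]|=m$; taking $[x,y]$ as residues for $\mathbb{Z}_m$, part (1) shows every $z\in[x,y]\setminus H_n$ equals $2a-b$ for distinct $a,b\in H_n$ (the integer value $2a-b$ lies in $K_n$ and equals $z$, so reduction mod $m$ changes nothing), while $2^n=|H_n|<\sqrt{3m}$ because $3m\ge 4^n+2$; the extra hypothesis $m>\tfrac23(4^n-1)$ yields avoidance exactly as in part (2), hence completeness. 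For part (4) the key point is that the $K_l$ tile consecutively, $K_{k-1}\cup\cdots\cup K_{n-1}=[\tfrac13(4^{k-1}-1),\tfrac13(4^n-1)-1]$, an interval of length $\tfrac13(4^n-4^{k-1})\ge m$ precisely by the defining inequality for $k$; hence $I$ lies in this union, and one checks $S=H_{k-1}\cup\cdots\cup H_{n-1}$ (with $H_0:=\{0\}$ when $k=1$) lies in $I$ because $m>4^{n-1}$. Part (1) then gives that $S$ $(2,-1)$-saturates $I$, hence $\mathbb{Z}_m$. The size is $|S|=\sum_{j=k-1}^{n-1}2^j=2^n-2^{k-1}$; maximality of $k$ forces $3m>4^n-4^k$ (for $k=n-1$ this reads $m>4^{n-1}$, which is given), and since $k\le n-1$ one computes $(2^n-2^{k-1})^2=4^n-2^{n+k}+4^{k-1}\le 4^n-4^k<3m$, i.e.\ $|S|<\sqrt{3m}$.

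The genuinely delicate part is the interval bookkeeping in (3) and (4): verifying that the chosen block of representatives really contains $H_n$ (resp.\ $S$) and is contained in the relevant union of $K_l$'s, so that the integer identity $z=2a-b$ descends faithfully to $\mathbb{Z}_m$ and reduction stays injective on the saturating set, together with the inequality $(2^n-2^{k-1})^2<3m$ extracted from the maximality of $k$. The digit case-analysis establishing $3$-AP-freeness of $H_l$ is the other point requiring care, though it is elementary once one records that a block of $\{2,3\}$-digits of length $r$ has value strictly between $\tfrac23(4^r-1)$ and $4^r$.
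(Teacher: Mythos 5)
Your proposal is correct and follows essentially the same route as the paper: the digit table $k_i=2a_i-b_i$ for part (1), the highest-differing-digit argument for integer $3$-$\mathrm{AP}$-freeness, the observation that $H_n$ (resp.\ $S$) lies in an interval short enough that the modular relation $x+z\equiv 2y$ forces an integer equality, and the same size computations $2^n<\sqrt{3m}$ and $(2^n-2^{k-1})^2\leq 4^n-4^k<3m$ from the maximality of $k$. Your version is slightly more explicit about the $a\neq b$ issue in part (1), the tiling $K_{k-1}\cup\cdots\cup K_{n-1}$ and the containments $S\subseteq I$, but these are bookkeeping refinements rather than a different argument.
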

	\begin{proof}
		We start with proving $(1)$. It is enough to prove that $H_t$ saturates $K_t$. 
		If $k$ has $t$ digits, then consider the $t$-digit numbers $a$ and $b$ according to Table \ref{tab:my_label1}.
		
		\begin{table}[h!!]
			\centering
			\begin{tabular}{c||c|c|c|c|}
				$k_i$   & 1 & 2 & 3 & 4\\ \hline
				$a_i$  & 2 & 2 & 3 & 3 \\ \hline
				$b_i$  & 3 & 2 & 3 & 2
			\end{tabular}
			\caption{The value of $a_i$ and $b_i, i\leq t-1$, determined by the value of $k_i$.}
			\label{tab:my_label1}
		\end{table}
		
		Note that $k_i=2a_i-b_i$ and hence 
		\[k=(2a_{t-1}-b_{t-1})4^{t-1}+\cdots +(2a_0-b_0) 4^0=\]
		\[2 \sum_{i=0}^{t-1} a_{i}4^i - \sum_{i=0}^{t-1} b_i 4^i=2a-b,\]
		with $a,b \in H_t$. It follows that $b, a,$ and $ k=2a-b$ form a $3$-$\mathrm{AP}$ with difference $a-b$. 
		
		To prove $(2)$ and $(3)$ first we show that $S:=\cup_{i=1}^{\infty} H_i$ is $3$-$\mathrm{AP}$ free in $\mathbb{Z}$.
		Suppose to the contrary that $a<b<c$ are three elements of $S$ forming an arithmetic progression. Assume that $r$ is maximal such that the coefficients $a_r,b_r,c_r$ of $4^r$ are not all equal in the expressions of $a$, $b$, $c$ as above. Then clearly $a_r\leq b_r \leq c_r$. 
		If $a_r=b_r=2$, then $c_r=3$ and $b-a$ is at most $4^{r-1}+\ldots+1$, while $c-b$ is at least $4^r-4^{r-1}-\ldots-1$. It follows that $b-a \neq c-b$. Similar arguments work when $a_r=2$, $b_r=c_r=3$ and when $a_r=0$.
		
		If we consider the set $K_n$ with modulo $4^n$ addition, then  the $(2,-1)$-saturation property clearly holds. We show that the  $(2,-1)$-avoiding property  holds as well. Suppose to the contrary that $a<b<c$ are three elements of $H_n$ forming an arithmetic progression when considered modulo $4^n$.
		Then for some difference $0<d<4^n$ we have $a\equiv c+d \pmod {4^n}$ and 
		either $c-b=d$, or $b-a=d$. 
		From $a\equiv c+d \pmod {4^n}$ it follows that 
		$d \geq 4^n+\frac23(4^n-1)-(4^n-1)=\frac23(4^n-1)+1$ and hence $d=c-b$ and $d=b-a$ are impossible because of $(4^n-1)-\frac23(4^n-1)=\frac13(4^n-1) \geq \max\{c-b,b-a\}$. This proves $(2)$.
		
		The first part of $(3)$ follows from the fact that 
		\[\sqrt{3m}\geq \sqrt{4^n+2}>2^n=|H_n|.\]
		
		To prove the second part suppose to the contrary that $a<b<c$ are three elements of $H_n$ forming an arithmetic progression when considered modulo $4^n$.
		Then for some difference $0<d<m$ we have $a\equiv c+d \pmod m$ and 
		either $c-b=d$, or $b-a=d$. 
		From $a\equiv c+d \pmod m$ it follows that 
		$d \geq m+\frac23(4^n-1)-(4^n-1)=m-\frac13(4^n-1)> \frac13(4^n-1) $ and hence $d=c-b$ and $d=b-a$ are impossible because of $(4^n-1)-\frac23(4^n-1)=\frac13(4^n-1) \geq \max\{c-b,b-a\}$.
		
		To prove $(4)$ assume that $1 \leq k \leq n-1$ is maximal such that 
		$3m \leq 4^n-4^{k-1}$. It follows that 
		\[3m > 4^n-4^k.\]
		First note that $S:=H_{k-1}\cup\ldots \cup H_{n-1}$ has size $2^{k-1}+\ldots+2^{n-1}=2^{k-1}(1+\ldots+2^{n-k})=2^{k-1}(2^{n-k+1}-1)=2^n-2^{k-1}$. Since $S$ $(2,-1)$-saturates $Z:=K_{k-1}\cup \ldots \cup K_{n-1}$ and 
		$I:=[\frac13(4^{k-1}-1),\frac13(4^{k-1}-1)+m-1]\subseteq Z$, it follows that $S$ $(2,-1)$-saturates $I$. We want to show $|S|< \sqrt{3m}$, that is,
		\[2^n-2^{k-1}<\sqrt{3m}.\]
		Clearly, it is enough to prove $4^n+4^{k-1}-2^{n+k}< 3m$, which follows from $4^n+4^{k-1}-2^{n+k}< 4^n-4^k < 3m$. 
	\end{proof}
	
	\subsection{Constructions in abelian groups of composite order}
	\label{last}
	

	\begin{theorem}
		Let $G$ denote a commutative group, $H$ a subgroup of $G$. Put $S=\{a_1,a_2,\ldots,a_s\} \subseteq H$ of size $s$ and $T=\{b_1+H,b_2+H,\ldots,b_t+H\} \subseteq G/H$ of size $t$. Let $w_1, w_2 \in \mathbb{Z}$ such that
		$w_1+w_2=1$ holds. 
		Define
		\[X=\{a_i + b_j : i\in \{1,\ldots,s\},\, j \in\{1,\ldots,t\}\} \subseteq G.\]
		\begin{enumerate}[\rm(1)]
			\item If $S$ and $T$ are $(w_1,w_2)$-saturating in the groups $H$ and $G/H$, respectively, then
			$X$ is $(w_1,w_2)$-saturating in $G$.
			\item Assume that $S$ and $T$ are $(w_1,w_2)$-avoiding in the groups $H$ and $G/H$, respectively, and the order of $x-y$ is not a divisor of $w_1$ in the group $H$ ($G/H$) for each $x,y\in S$ (for each $x,y \in T$). Then $X$ is $(w_1,w_2)$-avoiding in $G$.
			\item Assume that $S$ and $T$ are complete $(w_1,w_2)$-avoiding in the groups $H$ and $G/H$, respectively, and the order of $x-y$ is not a divisor of $w_1$ in the group $H$ ($G/H$) for each $x,y\in S$ (for each $x,y \in T$). Then $X$ is complete $(w_1,w_2)$-avoiding in $G$.
		\end{enumerate}
	\end{theorem}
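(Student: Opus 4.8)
The plan is to pass to the quotient $\pi\colon G\to G/H$ and exploit the hypothesis $w_1+w_2=1$, which has the effect that $w_1c+w_2c=c$ for every $c$; this is exactly what lets one transfer a saturation certificate from $G/H$ down into $H$ (and back). First I would record the basic facts: $\pi(a_i+b_j)=b_j+H$, so $\pi(X)=T$, and since the $b_j$ lie in pairwise distinct cosets the map $(i,j)\mapsto a_i+b_j$ is injective, giving $|X|=st$.

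For (1), I would take $g\in G\setminus X$ and split on whether $\pi(g)\in T$. If $\pi(g)\notin T$, then $(w_1,w_2)$-saturation of $T$ in $G/H$ produces distinct cosets $b_j+H,b_{j'}+H$ with $\pi(g)=w_1(b_j+H)+w_2(b_{j'}+H)$; I set $g_0:=g-w_1b_j-w_2b_{j'}$, which lies in $H$ since $\pi(g_0)=0$. If $g_0\notin S$, then $(w_1,w_2)$-saturation of $S$ in $H$ gives distinct $a_i,a_{i'}\in S$ with $g_0=w_1a_i+w_2a_{i'}$, so $g=w_1(a_i+b_j)+w_2(a_{i'}+b_{j'})$; if instead $g_0=a\in S$, then $w_1+w_2=1$ already gives $g=w_1(a+b_j)+w_2(a+b_{j'})$. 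Either way the two points lie in $X$ and are distinct because their cosets differ. If $\pi(g)=b_k+H\in T$, then $g-b_k\in H$ and $g-b_k\notin S$ (else $g\in X$), so $S$-saturation gives $g=w_1(a_i+b_k)+w_2(a_{i'}+b_k)$ with $a_i\neq a_{i'}$; the two points again lie in $X$ and are distinct. Hence $X$ is $(w_1,w_2)$-saturating.

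For (2), I would suppose for contradiction that $x=w_1x'+w_2x''$ with $x,x',x''$ pairwise distinct in $X$, write $x=a+b,\ x'=a'+b',\ x''=a''+b''$ with $a,a',a''\in S$ and $b,b',b''$ among $b_1,\ldots,b_t$, and apply $\pi$ to obtain $b+H=w_1(b'+H)+w_2(b''+H)$. I then case-split on the coincidences among $b+H,b'+H,b''+H$. If all three are distinct this contradicts $T$ being $(w_1,w_2)$-avoiding. If $b'+H=b''+H$, then $w_1+w_2=1$ forces $b+H=b'+H$, so all three agree; since the $b_j$ are distinct representatives this gives $b=b'=b''$, and cancelling $b$ in the original equation yields $a=w_1a'+w_2a''$ with $a,a',a''$ pairwise distinct (the $x$'s are distinct with a common coset-part), contradicting $S$ being $(w_1,w_2)$-avoiding. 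The remaining, and key, possibility is that exactly two cosets coincide: if $b+H=b''+H$ then the projected relation collapses to $w_1\big((b''+H)-(b'+H)\big)=0$, so the order of the difference of the two distinct elements $b''+H,b'+H$ of $T$ divides $w_1$, contradicting the order hypothesis on $T$; the subcase $b+H=b'+H$ is handled analogously via the hypothesis on the other coefficient, and for $w=(2,-1)$ it does not occur at all, since $w_2=-1$ would force all three cosets to agree. Thus no such triple exists and $X$ is $(w_1,w_2)$-avoiding.

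Finally, (3) is immediate: ``complete $(w_1,w_2)$-avoiding'' means ``$(w_1,w_2)$-avoiding and $(w_1,w_2)$-saturating'', so assuming $S$ and $T$ are both complete $(w_1,w_2)$-avoiding, part (2) gives that $X$ is $(w_1,w_2)$-avoiding and part (1) gives that $X$ is $(w_1,w_2)$-saturating, whence $X$ is complete $(w_1,w_2)$-avoiding. I expect the only real obstacle to be the ``exactly two cosets coincide'' case of (2): this is precisely where the order hypothesis on $S$ and $T$ is used, and the bookkeeping about which coefficient ($w_1$ versus $w_2$) the relevant order must divide must be done carefully — a non-issue in the applications, where $|w_2|=1$.
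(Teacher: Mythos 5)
Your argument is correct and follows essentially the same route as the paper's: project along $\pi\colon G\to G/H$, use the hypothesis on $T$ to control the cosets, then descend to an equation in $H$ and invoke the hypothesis on $S$. Two differences are worth recording. In part (1) you are more careful than the paper, whose proof applies the saturation property of $T$ to $b+H$ and of $S$ to $a-a'$ without addressing the possibility that these elements already lie in $T$ resp.\ $S$ (saturation says nothing about elements of the set itself); your two extra branches, both resting on $w_1+w_2=1$, close exactly these gaps. In part (2), the caveat you raise at the end is not mere bookkeeping: the subcase $b+H=b'+H\neq b''+H$ genuinely requires that the order of $(b'+H)-(b''+H)$ not divide $w_2$, and this is \emph{not} implied by the stated hypothesis, which only excludes divisors of $w_1$ --- indeed, since $\gcd(w_1,w_2)=1$, an order greater than $1$ that divides $w_2$ automatically fails to divide $w_1$, so the $w_1$-hypothesis can never rule this subcase out. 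The paper's own proof dismisses it with ``we may assume $b_i\neq b_k$'' and no justification, and the statement actually fails without the extra assumption: take $w=(3,-2)$, $G=\mathbb{Z}_{14}$, $H=2\mathbb{Z}_{14}$, $S=\{0,2\}$, $T=\{0+H,1+H\}$; all stated hypotheses hold (the relevant orders are $7$ and $2$, neither a divisor of $3$), yet $X=\{0,1,2,3\}$ admits the nontrivial solution $3=3\cdot 1-2\cdot 0$. So your proof of (2) is valid precisely under the symmetrized hypothesis (the order divides neither $w_1$ nor $w_2$), which is what should be assumed; as you observe, this is automatic in the paper's applications, where $w_2=-1$.
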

	
	\begin{proof} First suppose that $S$ and $T$ are $(w_1,w_2)$-saturating sets.
		Take some $c\in G$. Then $c=a+b$ where $a\in H$ and $b+H$ is an element of $G/H$.
		
		By the saturation property, there exist distinct $b_i+H, b_j+H \in T$ such that $w_1( b_i+H) + w_2 (b_j+H) =b+H$. 
		If $w_1b_i + w_2b_j = a'+ b$, for some $a'\in H$, then take some distinct  
		$a_f, a_g \in S$ such that $w_1a_f + w_2a_g=a- a'$.
		Then 
		\[w_1(a_f+b_i)+w_2(a_g+b_j)=c.\]
		If $a_f+b_i=a_g+b_j$, then $a_f-a_g=b_j-b_i \in H$, a contradiction since $i\neq j$. It follows that $X$ saturates $c\in G$. 
		
		Now assume that the conditions of part $(2)$ hold, and 
		\[w_1(a_f+ b_i)+w_2(a_g+b_j)= a_h+b_k\]
		for some elements $a_f+b_i$, $a_g+b_j$, $a_h+b_k$ of $X$. 
		Thus $w_1(b_i+H)+w_2(b_j+H)=b_k+H$ and hence $\{b_i+H, b_j+H, b_k+H\}$ is a set of size at most $2$. 
		If it has size $2$, then we may assume $b_i \neq b_k$. Then the order of $(b_i+H)-(b_k+H)$ is divisible by $w_1$, a contradiction. If $b_i+H=b_j+H=b_k+H$, then $w_1 a_f + w_2 a_g = a_h$. It follows that the size of $\{a_f,a_g,a_h\}$ is at most $2$. If it has size $2$, then we may assume $a_f \neq a_h$. Then the order of $a_f-a_h$ divides $w_1$, a contradiction. Consequently, $a_f=a_g=a_h$ holds and hence
		$a_f+b_i=a_g+b_j=a_h+b_k$. 
		
		The third part is a direct consequence of the first two.
	\end{proof}
	
	In the next result $\mathbb{Z}_r$ is considered as $\{0,1,\ldots,r-1\}$ with operation the usual addition in $\mathbb{Z}$ modulo $r$.
	
	\begin{corollary}
		\label{prodc} 
		Put $S=\{a_1,a_2,\ldots,a_s\} \subseteq \mathbb{Z}_m$ and $T=\{b_1,b_2,\ldots,b_t\} \subseteq \mathbb{Z}_n$. Let $w_1, w_2 \in \mathbb{Z}$ such that
		$w_1+w_2=1$ hold.
		Define
		\[X=\{a_i n + b_j : i\in \{1,\ldots,s\},\, j \in\{1,\ldots,t\}\} \subseteq \mathbb{Z}_{nm}.\]
		\begin{enumerate}[\rm(1)]
			\item If $S$ and $T$ are $(w_1,w_2)$-saturating, then
			$X$ is $(w_1,w_2)$-saturating in $\mathbb{Z}_{nm}$.
			\item Assume that $S$ and $T$ are $(w_1,w_2)$-avoiding, the difference of distinct elements of $S$ is not divisible by $m$, the difference of distinct elements of $T$ is not divisible by $n$. Then 
			$X$ is $(w_1,w_2)$-avoiding in $\mathbb{Z}_{nm}$.
			\item Assume that $S$ and $T$ are complete $(w_1,w_2)$-avoiding, the difference of distinct elements of $S$ is not divisible by $m$, the difference of distinct elements of $T$ is not divisible by $n$. Then
			$X$ is complete $(w_1,w_2)$-avoiding in $\mathbb{Z}_{nm}$.
		\end{enumerate}
	\end{corollary}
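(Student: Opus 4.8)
The plan is to obtain Corollary~\ref{prodc} as the special case of the preceding theorem in which $G=\mathbb{Z}_{nm}$, the subgroup is $H=\{0,n,2n,\ldots,(m-1)n\}$ (the unique subgroup of order $m$), and the quotient is $G/H$ of order $n$. First I would fix the two identifications $\iota\colon\mathbb{Z}_m\to H$, $\iota(a)=an$, which is a group isomorphism, and $G/H\to\mathbb{Z}_n$, $b+H\mapsto (b\bmod n)$, which is well defined because $\{0,1,\ldots,n-1\}$ is a transversal of $H$ in $\mathbb{Z}_{nm}$. Under these identifications $\iota(S)\subseteq H$ corresponds to $S$ and $\{b_1+H,\ldots,b_t+H\}\subseteq G/H$ corresponds to $T$ (here one uses that the $b_j$, being elements of the set $T\subseteq\mathbb{Z}_n=\{0,\ldots,n-1\}$, are pairwise incongruent modulo $n$). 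Since $w_1+w_2=1$, the set $X=\{a_in+b_j\}$ is precisely the set $X$ built from $\iota(S)$ and $\{b_j+H\}$ in the theorem, so the corollary will follow from its three parts once the hypotheses are matched.

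Next I would record the routine facts. A short argument shows $|X|=st$: if $a_in+b_j\equiv a_kn+b_\ell\pmod{nm}$, reducing modulo $n$ gives $b_j=b_\ell$, hence $j=\ell$, and then $(a_i-a_k)n\equiv 0\pmod{nm}$ gives $a_i=a_k$. The $(w_1,w_2)$-saturating and $(w_1,w_2)$-avoiding properties are invariants of the abstract group when $w_1,w_2\in\mathbb{Z}$, so they transport verbatim along $\iota$ and along $G/H\cong\mathbb{Z}_n$: $S$ is $(w_1,w_2)$-saturating (resp. avoiding, resp. complete avoiding) in $\mathbb{Z}_m$ if and only if $\iota(S)$ is so in $H$, and likewise for $T$ and $\{b_j+H\}$ in $G/H$. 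Parts~(1), (2), (3) of the corollary then correspond to parts~(1), (2), (3) of the theorem.

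The one step requiring genuine care --- and the step I expect to be the only real obstacle --- is matching the order hypothesis in parts~(2) and (3). The theorem asks that for distinct $x,y$ in $\iota(S)$ the order of $x-y$ in $H$ not divide $w_1$; since $x-y=\iota(a_i-a_j)$, that order equals the order of $a_i-a_j$ in $\mathbb{Z}_m$, i.e.\ $m/\gcd(m,a_i-a_j)$, and it divides $w_1$ exactly when $w_1(a_i-a_j)\equiv 0\pmod m$. The hypothesis ``$a_i-a_j$ is not divisible by $m$'' is equivalent to this order being $>1$, which is what is needed whenever $\gcd(m,w_1)=1$ --- in particular for the symmetric equation of interest, where one may take the coefficient vector to be $(-1,2)$ (note that $(w_1,w_2)$- and $(w_2,w_1)$-avoidance coincide) so that $w_1=-1$ and ``order does not divide $w_1$'' reads simply as ``$x\ne y$''. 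The same discussion, with $n$ in place of $m$, handles $T$. Assembling these observations and invoking the theorem completes the proof.
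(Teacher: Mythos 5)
Your reduction is exactly the intended one --- the paper offers no separate proof of Corollary \ref{prodc}; it is meant to be read off from the preceding theorem by taking $G=\mathbb{Z}_{nm}$, $H=n\mathbb{Z}_{nm}\cong\mathbb{Z}_m$ and $G/H\cong\mathbb{Z}_n$ --- and your bookkeeping (the two identifications, $|X|=st$, transport of the saturating and avoiding properties along the isomorphisms) is fine. You have also put your finger on precisely the right spot: for $S\subseteq\{0,\ldots,m-1\}$ the corollary's hypothesis ``$a_i-a_j$ is not divisible by $m$'' is nothing more than distinctness, whereas the theorem needs the order of $a_i-a_j$ in $\mathbb{Z}_m$ not to divide $w_1$, i.e.\ $w_1(a_i-a_j)\not\equiv 0\pmod m$. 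Your observation that these coincide when $\gcd(m,w_1)=1$ (or, after swapping the coefficients, when $\gcd(m,w_2)=1$) is correct, and it covers $(2,-1)$ and the paper's own applications.

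The problem is the closing sentence ``Assembling these observations and invoking the theorem completes the proof'': for general $(w_1,w_2)$ the corollary's hypothesis genuinely does not imply the theorem's, and no further trick can bridge this, because parts (2) and (3) are false as stated. Take $m=n=3$, $(w_1,w_2)=(3,-2)$ and $S=T=\{0,1,2\}$. Both sets are $(3,-2)$-avoiding in $\mathbb{Z}_3$ (there $3a'-2a''\equiv a''$, so no solution in pairwise distinct elements exists), and all differences of distinct elements are $\pm1,\pm2$, not divisible by $3$; yet $X=\{0,1,\ldots,8\}=\mathbb{Z}_9$ contains the non-trivial solution $3=3\cdot 1-2\cdot 0$. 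Note that the swap does not rescue this case even though $\gcd(3,-2)=1$: applying the theorem to $(-2,3)$ would yield the same false conclusion, which exposes a gap in the theorem itself --- its proof of part (2) asserts without justification that one may assume $b_i\neq b_k$, and thereby treats only one of the two degenerate cases; in the other case ($b_i+H=b_k+H\neq b_j+H$) the condition needed is that the order of the difference does not divide $w_2$. The hypothesis that actually makes both the theorem and the corollary work is that $w_1(a_i-a_j)\not\equiv 0$ and $w_2(a_i-a_j)\not\equiv 0 \pmod m$ for all distinct $a_i,a_j\in S$ (and the analogous condition for $T$ modulo $n$); under that hypothesis your derivation goes through verbatim. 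So your proof is valid exactly in the range you identified, and the ``only real obstacle'' you flagged is a genuine defect of the stated hypotheses rather than a step you failed to supply.
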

	
	\begin{example}
		$\{0,1,2\}$ is complete $(3,-2)$-avoiding in $\mathbb{Z}_9$ and hence  $a((3,-2),\mathbb{Z}_{9^n})=~3^n$.
	\end{example}
	
	\begin{example}
		$\{0,1\}$ is complete $(2,-1)$-avoiding in $\mathbb{Z}_4$ and hence  $a((2,-1),\mathbb{Z}_{4^n})=2^n$, as we already saw in the previous section.
	\end{example}
	
	\section{Concluding remarks and open problems}
	
	In this paper, we proved that in a large family of vector spaces, the minimum size of a complete $3$-$\mathrm{AP}$ free set is equal to a small absolute constant multiple of the  lower bound. However, it remained an open question to decide whether this is true for every vector space $\F_q^n$, $q>2$.
	
	\begin{problem}[Minimum size complete $3$-AP free sets] \ \\ Is it true that $$a(3-AP, \F_q^n)<C\cdot \sqrt{q^n}$$ for an absolute constant $C$, that is, the natural lower bound is tight up to a constant factor?
	\end{problem} 
	Concerning cyclic groups, we pose the following
	\begin{problem} Is it true that
		$a(3-AP,\mathbb{Z}_m)< a((2,-1), \mathbb{Z}_m)<\sqrt{c_m \cdot m}$ holds for $c_m\in [1, 1.5]$, a constant depending only on $m$, for all large enough values of $m$? 
	\end{problem}
	The first inequality follows from the definition (see Remark \ref{rem}), while we proved the second inequality for a dense set of natural numbers $m$ in Theorem \ref{maingroups}. It would be also interesting to see an improvement on the constant $c_m$.
	
	\section*{Acknowledgement}
	The authors would like to thank the referees for their helpful suggestions.\\
	This work was supported by the Italian National Group for Algebraic and Geometric Structures and their Applications (GNSAGA--INdAM). Both authors acknowledge the partial support of the Hungarian Research Grant (NKFI) K 124950. The first author is supported by the J\'anos Bolyai Research Scholarship of the Hungarian Academy of Sciences. The second author is supported by the Hungarian Research Grant (NKFI) No. PD  134953 and by the University Excellence Fund of Eötvös Loránd University.

	{\footnotesize
		}
	
\end{document}